\documentclass[11pt,a4paper]{article}

\usepackage[utf8]{inputenc}
\usepackage[T1]{fontenc}
\usepackage{lmodern}
\usepackage[margin=1in]{geometry}
\usepackage{amsmath,amssymb,amsthm}
\usepackage{mathtools}
\usepackage{mathrsfs}
\usepackage{bm}
\usepackage{graphicx}

\usepackage[hidelinks]{hyperref}
\hypersetup{
  pdftitle={A covering construction of labelled packing measure and content},
  pdfauthor={Peizhi Liu},
  pdfsubject={Labelled packing measure and content}
}

\theoremstyle{plain}
\newtheorem{theorem}{Theorem}[section]
\newtheorem{lemma}[theorem]{Lemma}
\newtheorem{proposition}[theorem]{Proposition}
\newtheorem{corollary}[theorem]{Corollary}

\theoremstyle{definition}

\theoremstyle{remark}
\newtheorem{remark}[theorem]{Remark}

\numberwithin{equation}{section}

\newcommand{\N}{\mathbb{N}}

\newcommand{\R}{\mathbb{R}}

\newcommand{\K}{\mathcal K}
\newcommand{\Kz}{\mathcal K_0}
\newcommand{\Prob}{\operatorname{Pr}}

\newcommand{\proj}{\operatorname{proj}}

\newcommand{\MM}{\mathcal M}
\newcommand{\dimH}{\dim_{\mathrm H}}
\newcommand{\dimP}{\dim_{\mathrm P}}

\DeclareMathOperator{\diam}{diam}

\DeclareMathOperator{\dist}{dist}

\title{A covering construction of labelled packing measure and content}
\author{Peizhi Liu\\[0.5em]
  \small School of Mathematics and Statistics,\\
  \small Nanjing University of Science and Technology,\\
  \small Nanjing 210094, China\\
  \small \href{mailto:liupeizhi@njust.edu.cn}{\texttt{liupeizhi@njust.edu.cn}}}
\date{}

\begin{document}
\maketitle
\label{firstpage}

\begin{abstract}
We introduce labelled packing measure by a covering construction in which each set has an independent positive label bounding its diameter and the radii in its local packing cost. Letting the labels tend to zero defines a metric outer measure $\mathcal M^s$. For every $s>0$ and every metric space $(X,d)$, we prove
\[
 2^{-s}\mathcal P^s(F)\le\mathcal M^s(F)
 \le C(s)\mathcal P^s(F)\qquad(F\subset X),
\]
where $\mathcal P^s(F)$ is the classical packing measure and $C(s)$ depends only on $s$. Thus labelled packing measure and classical packing measure have the same null sets and critical exponent. Allowing arbitrary finite positive labels defines labelled packing content. At the common positive Hausdorff and packing dimension, this content and the labelled packing measure agree on cylinders associated with irreducible subshifts of finite type. In particular, equality holds for non-singleton self-similar attractors and components of finite strongly connected graph-directed systems, without separation assumptions.
\end{abstract}

\noindent\textbf{Mathematics Subject Classification.}
Primary 28A78; Secondary 28A12, 28A80.

\section{Definitions and Main Results}

\subsection{Labelled packing measure}

Hausdorff measure is defined by coverings of small diameter. Packing
measure is obtained in two stages: a supremum over disjoint balls defines
the packing premeasure, which is then regularized by countable coverings.
This construction was introduced by Tricot~\cite{Tricot1982} and
developed by Sullivan~\cite{Sullivan1984} and Taylor and
Tricot~\cite{TaylorTricot1985}; see also \cite{Falconer2014}.
In the terminology of Rogers~\cite{Rogers1970}, let \(\tau\) be
a non-negative set function with \(\tau(\varnothing)=0\). Its Method I
construction is the covering envelope
\[
 \tau^{\mathrm I}(F)
 :=\inf\left\{\sum_{j=1}^{\infty}\tau(E_j):
 F\subseteq\bigcup_{j=1}^{\infty}E_j\right\}.
\]
For Method II, one first restricts the diameters of the covering sets. Thus
\[
 \tau^{\mathrm{II}}_\delta(F)
 :=\inf\left\{\sum_{j=1}^{\infty}\tau(E_j):
 F\subseteq\bigcup_{j=1}^{\infty}E_j,\ |E_j|\leq\delta\right\},
 \qquad
 \tau^{\mathrm{II}}(F):=\lim_{\delta\downarrow0}
 \tau^{\mathrm{II}}_\delta(F).
\]
Method I imposes countable subadditivity in a single covering step. Method II
also localizes the construction by forcing the covering diameters to tend to
zero. Hausdorff measure is the standard example of Method II. Classical
packing measure uses Method I after the packing premeasure has already been
formed.

Let \((X,d)\) be a metric space, let \(\widehat X\) be its completion,
and let all balls be closed balls in \(X\). For \(E\subset X\), write
\[
        |E|:=\diam E,
\]
with \(|\varnothing|=0\), and allow \(|E|=\infty\).

We first recall the definition of classical packing measure.
A packing is a finite or countable pairwise disjoint family of balls.
For \(s>0\) and
\(\delta\in(0,\infty]\), define the \(\delta\)-packing precontent by
\[
\mathcal P^{s}_{\delta}(E):=\sup\left\{\sum_i (2r_i)^{s}:
 \{B(x_i,r_i)\}_i\text{ is a packing},\ x_i\in E,\
 0<r_i<\infty,\ r_i\leq\delta\right\}.
\]
The packing
premeasure is
\(\mathcal P^{s}_{0}(E):=\inf_{\delta>0}\mathcal P^{s}_{\delta}(E)\).
The \emph{\(s\)-dimensional packing measure} \(\mathcal P^s\) is the
Method I regularization of \(\mathcal P^s_0\):
\[
\mathcal P^s(F)
:=
\inf\left\{
        \sum_{j=1}^\infty \mathcal P^s_0(F_j):
        F\subseteq \bigcup_{j=1}^\infty F_j
\right\}.
\]

We now define labelled packing measure by a Method II construction
in which each covering set is assigned a positive label.
The label bounds both the diameter of the set and the radii used to
define its local packing cost.

For \(s>0\), nonempty \(E\subset X\), and \(0<R<\infty\) with
\(|E|\le R\), define the \emph{local packing cost} by
\begin{equation}
\label{eq:label-cost}
 \Phi^s(E,R):=
 \sup\left\{\sum_{i=1}^N(2r_i)^s:
 \begin{array}{l}
 x_i\in\overline E^{\widehat X},\quad 0<r_i\le R,\\
 d(x_i,x_j)\ge r_i+r_j\quad(i\ne j)
 \end{array}\right\}.
\end{equation}
Set
\(\Phi^s(\varnothing,R)=0\). For \(0<\delta<\infty\), define
\begin{equation}
\label{eq:label-content}
 \mathcal M^s_\delta(F):=
 \inf\left\{\sum_{j=1}^{\infty}\Phi^s(E_j,R_j):
 F\subseteq\bigcup_{j=1}^{\infty}E_j,\quad
 |E_j|\le R_j\le\delta,\quad R_j>0\right\}.
\end{equation}
The \emph{labelled packing measure} is defined by
\begin{equation}
\label{eq:label-measure}
 \mathcal M^s(F):=
 \lim_{\delta\downarrow0}\mathcal M^s_\delta(F)
 =\sup_{0<\delta<1}\mathcal M^s_\delta(F).
\end{equation}

Our first main result shows that this Method II construction recovers packing
measure up to a constant depending only on the exponent. 

\begin{theorem}
\label{thm:comparison}
Let \(s>0\), and let \((X,d)\) be a metric space. There exists a constant
\(C(s)\in(0,\infty)\), depending only on \(s\), such that
\[
        2^{-s}\mathcal P^{s}(F)
        \leq
        \mathcal M^{s}(F)
        \leq
        C(s)\mathcal P^{s}(F)
        \qquad(F\subset X).
\]
In particular, setting
\[
        \dim_{\mathcal M}F
        :=\inf\{t>0:\mathcal M^{t}(F)=0\},
\]
one has \(\dim_{\mathcal M}F=\dim_P F\).
\end{theorem}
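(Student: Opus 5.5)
The lower bound follows directly from the definitions by comparing the two kinds of packing. The upper bound is reduced to a statement about one set of finite packing premeasure; that statement rests on an assembly lemma I have not proved and expect to be the main obstacle.

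\emph{Lower bound.} I would show that for every nonempty $E\subset X$ and every $0<R<\infty$ with $|E|\le R$,
\[
 \mathcal P^s_R(E)\le 2^s\,\Phi^s(E,R).
\]
Take a packing $\{B(x_i,r_i)\}$ with $x_i\in E$ and $r_i\le R$.
\begin{itemize}
\item Closed balls in a general metric space are only known to satisfy $d(x_i,x_j)>\max(r_i,r_j)$ when disjoint, since the centre of the smaller ball cannot lie in the larger one.
\item This gives $d(x_i,x_j)\ge (r_i+r_j)/2$, so the halved radii $r_i/2\le R$ form an admissible configuration in \eqref{eq:label-cost}.
\item That configuration contributes $\sum_i r_i^s=2^{-s}\sum_i(2r_i)^s$, and the inequality follows.
\end{itemize}
Now take any admissible cover $\{E_j\}$ in \eqref{eq:label-content}. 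Since $\mathcal P^s_0\le\mathcal P^s_{R_j}$, we get
\[
 \mathcal P^s(F)\le\sum_j\mathcal P^s_0(E_j)\le 2^s\sum_j\Phi^s(E_j,R_j).
\]
Taking the infimum gives $\mathcal P^s(F)\le 2^s\mathcal M^s_\delta(F)$ for every $\delta$, which is the left inequality.

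\emph{Upper bound: reduction.} First I would check that $\mathcal M^s$ is an outer measure. This holds because it is a Method II construction, and countable subadditivity of each $\mathcal M^s_\delta$ is immediate by concatenating covers. Given that, it suffices to prove
\[
 \mathcal M^s(F)\le C(s)\,\mathcal P^s_0(F)\quad\text{for every }F.
\]
Indeed, for any cover $F\subseteq\bigcup F_j$, subadditivity then gives $\mathcal M^s(F)\le C(s)\sum_j\mathcal P^s_0(F_j)$, and taking the infimum over covers yields the right inequality with the Method I regularization. So fix $F$ with $\mathcal P^s_0(F)<\infty$, fix $\delta>0$, and choose $\eta\le\delta$ with $\mathcal P^s_\eta(F)\le\mathcal P^s_0(F)+\varepsilon$.

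\emph{Upper bound: assembly lemma.} The goal is a cover $F\subseteq\bigcup_i E_i$ with labels $|E_i|\le R_i\le\eta$ such that the near-optimal configurations for the costs $\Phi^s(E_i,R_i)$ can be combined into one global configuration. After shrinking all radii by a fixed factor $\lambda=\lambda(s)$, the combined configuration should be a genuine packing centred in $\overline F$ with radii at most $\eta$. That would give
\[
 \sum_i\Phi^s(E_i,R_i)\le\lambda^{-s}\bigl(\mathcal P^s_\eta(\overline F)+\varepsilon\bigr).
\]
Centres in $\overline F^{\widehat X}$ can be moved to nearby points of $F$ at the cost of shrinking radii by an arbitrarily small amount. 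Hence $\mathcal P^s_\eta(\overline F)$ can be replaced by $\mathcal P^s_\eta(F)$ up to a factor tending to $1$, and this would give the bound with $C(s)=\lambda^{-s}$ or a similar constant.

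To build the cover I would first try a Vitali-type selection. Around points of $F$, take balls $B(x_i,\rho_i)$ forming a disjoint subfamily whose enlargements $B(x_i,5\rho_i)$ cover $F$. Then set $E_i=F\cap B(x_i,5\rho_i)$ with $R_i=10\rho_i$. I would iterate this over generations $\rho\in\{\eta 2^{-k}\}$ so that overlapping enlargements are charged to smaller scales, with the geometric sum $\sum_k 2^{-ks}$ absorbed into $C(s)$.

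\emph{Main obstacle.} The hard step is exactly this assembly. Configurations chosen independently in overlapping sets $E_i$ need not be mutually separated. In a general metric space there is no doubling bound limiting how many pieces meet, so the pieces cannot simply be split into a bounded number of well-separated classes. I expect the resolution to use that $\Phi^s(E,R)\ge(2R)^s$ and a stopping-time choice of each label $R_i$. The label would be taken as the largest scale at which the local packing near $x_i$ is not much richer than a single ball, which forces neighbouring configurations to be essentially disjoint.

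\emph{Dimension statement.} The identity $\dim_{\mathcal M}F=\dim_P F$ follows because the two-sided comparison shows $\mathcal M^t$ and $\mathcal P^t$ have the same null sets for each $t>0$.
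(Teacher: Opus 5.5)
Your lower bound, your reduction of the upper bound to $\mathcal M^s(E)\le C(s)\mathcal P^s_0(E)$ via countable subadditivity, and the dimension identity are correct and agree with the paper. The gap is the upper bound itself. The assembly lemma is left unproved, and it is the wrong target. Carriers of a cover overlap, so near-optimal configurations for different costs $\Phi^s(E_i,R_i)$ may use the same centres, and no uniform shrinking turns them into one packing. The paper never merges local configurations. Instead it makes each local cost comparable to a single ball. For $x\in E$, choose $r=r(x)\in(0,\delta]$ with $\mathcal P^s_{r}(E\cap B(x,r))\le K(2r)^s$. Then $\Phi^s(E\cap B(x,r),2r)\le\mathcal P^s_{2r}(E\cap B(x,r))\le K(1+2^s)(2r)^s$. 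Balls of radius at most $r$ contribute at most $K(2r)^s$. Balls of radius in $(r,2r]$, once shrunk to radius $r$, form an $r$-packing, so there are at most $K$ of them. Apply Lemma~\ref{lem:5r} to the balls $B(x,r(x)/5)$; a countable selection is possible because finite premeasure gives total boundedness. This yields disjoint balls $B(x_j,r_j/5)$ such that the $B(x_j,r_j)$ cover, hence $\sum_j(2r_j)^s\le5^s\mathcal P^s_\delta(E)$. The only packing ever formed is this Vitali family, which is disjoint by construction. Overlap of the carriers $E\cap B(x_j,r_j)$ is irrelevant, and no charging across generations $\rho=\eta2^{-k}$ is needed.

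The idea you are missing entirely is how to treat the set where no such scale exists,
\[
 Z=\{x\in E:\mathcal P^s_r(E\cap B(x,r))>K(2r)^s\ \text{for every}\ 0<r\le\delta\}.
\]
Your rule (the largest scale at which the local packing is not much richer than one ball) is undefined there, and $Z$ need not be empty. The paper shows $\mathcal P^s_\delta(Z)\le4^sK^{-1}\mathcal P^s_\delta(E)$. Inside each ball $B(x_i,r_i)$ of a $\delta$-packing of $Z$, it nests a packing of $E\cap B(x_i,r_i/4)$ of mass exceeding $K4^{-s}(2r_i)^s$, which lies in $B(x_i,r_i/2)$. The paper works throughout at scale $\delta/2$, so that the labels $2r_j$ do not exceed $\delta$, and takes $K>4^s$. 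It then re-applies the decomposition to $Z$, then to its high-density part, and so on. The precontents decay by the factor $4^s/K$, and the costs sum to at most $5^sK(1+2^s)(1-4^s/K)^{-1}\mathcal P^s_{\delta}(E)$. The remainder is eventually empty, because every nonempty set has $\delta/2$-packing precontent at least $\delta^s$. This yields $\mathcal M^s_\delta(E)\le C(s)\mathcal P^s_\delta(E)$, which is exactly the estimate your reduction needs.
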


The upper estimate follows from a finite-scale bound obtained by
iteration. At each stage, the stopping-time construction covers part of
the set with controlled total cost and reduces the packing precontent
of the remainder by a fixed factor. Summing the resulting estimates and
passing to the scale limit gives the comparison.

\subsection{Labelled packing content}

Hausdorff content $\mathcal H^s_\infty$ is obtained from the covering
formula for $\mathcal H^s$ by omitting the restriction on covering
diameters. Farkas and Fraser~\cite[Theorem~2.1 and
Corollaries~2.3--2.4]{FF} proved equality between Hausdorff content
and Hausdorff measure at the Hausdorff dimension for cylinders
associated with irreducible subshifts of finite type and for the
sets $G_i$ defined below. In particular, their results apply to
self-similar sets and components of strongly connected graph-directed
systems of similarities, without separation assumptions.

They also discussed possible definitions of packing content
in~\cite[Section~5]{FF}. Simply removing the radius bound from the
packing supremum gives infinity on every nonempty set when $s>0$.
They suggested requiring at least two balls or bounding the radii by
the diameter of the set, but did not develop these suggestions into
a covering content. Their packing results give equality between
$\mathcal P^s_\delta$ and $\mathcal P^s$ at the packing dimension,
for all sufficiently small $\delta>0$ under strong separation.

The independent labels in~\eqref{eq:label-content} provide a covering
formulation: each local packing cost has a finite radius bound, while
the covering construction imposes countable subadditivity.
For $F\subset X$, the \emph{labelled packing content} is defined by
allowing arbitrary finite positive labels:
\begin{equation}\label{content:full}
 \MM^s_\infty(F):=
 \inf\left\{\sum_j\Phi^s(E_j,R_j):
 F\subset\bigcup_jE_j,\quad
 |E_j|\le R_j,\quad 0<R_j<\infty\right\}.
\end{equation}

\begin{corollary}\label{content:basic}
For $s>0$, $A\subset\R^d$, and $0<\delta<\infty$,
\begin{equation}\label{content:chain}
 \mathcal P^s(A)\le\MM^s_\infty(A)
 \le\MM^s_\delta(A)\le\MM^s(A)
 \le C(s)\mathcal P^s(A).
\end{equation}
\end{corollary}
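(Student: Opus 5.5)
The chain \eqref{content:chain} has four inequalities. The two middle ones follow from the definitions, and the last is Theorem~\ref{thm:comparison}. The substantive part is the first inequality, with constant $1$ rather than the $2^{-s}$ of Theorem~\ref{thm:comparison}. That improvement will come from the geometry of $\R^d$.

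\emph{Middle and right inequalities.} Every admissible family in \eqref{eq:label-content} (labels $R_j\le\delta$) is admissible in \eqref{content:full}. Hence $\MM^s_\infty(A)\le\MM^s_\delta(A)$. Next, $\MM^s_\delta$ is non-increasing in $\delta$, so $\MM^s_\delta(A)\le\sup_{\delta'}\MM^s_{\delta'}(A)=\MM^s(A)$ by \eqref{eq:label-measure}. The final inequality $\MM^s(A)\le C(s)\mathcal P^s(A)$ is the upper estimate of Theorem~\ref{thm:comparison} applied with $X=\R^d$.

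\emph{Left inequality.} The key step is a single-set estimate: for every nonempty $E\subset\R^d$ and every $R>0$ with $|E|\le R$,
\[
 \mathcal P^s(E)\le\Phi^s(E,R).
\]
To prove it, fix $0<\delta\le R$ and a packing $\{B(x_i,r_i)\}$ with $x_i\in E$ and $r_i\le\delta$. In $\R^d$, two closed balls $B(x_i,r_i)$ and $B(x_j,r_j)$ are disjoint exactly when $|x_i-x_j|>r_i+r_j$. Indeed, if $|x_i-x_j|\le r_i+r_j$, the point on the segment $[x_i,x_j]$ at distance $\min\{r_i,|x_i-x_j|\}$ from $x_i$ lies in both balls.

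Consequently every finite subfamily satisfies the constraints in \eqref{eq:label-cost}: the centres lie in $E\subset\overline E$, the radii satisfy $0<r_i\le R$, and $d(x_i,x_j)\ge r_i+r_j$. Hence every finite partial sum $\sum_{i\le N}(2r_i)^s$ is at most $\Phi^s(E,R)$. Letting $N\to\infty$ and taking the supremum over packings gives $\mathcal P^s_\delta(E)\le\Phi^s(E,R)$. Therefore
\[
 \mathcal P^s(E)\le\mathcal P^s_0(E)\le\mathcal P^s_\delta(E)\le\Phi^s(E,R),
\]
where the first step uses the trivial cover of $E$ by itself in the definition of $\mathcal P^s$.

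This is the only place where $\R^d$ is used. In a general metric space, disjointness of closed balls does not force $d(x_i,x_j)\ge r_i+r_j$. Halving the radii restores that separation, which is the source of the factor $2^{-s}$ in Theorem~\ref{thm:comparison}.

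Now take any admissible cover $\{E_j,R_j\}$ in \eqref{content:full}. Empty $E_j$ contribute $0$ to both sides. Since $\mathcal P^s$ is an outer measure, it is countably subadditive, so
\[
 \mathcal P^s(A)\le\sum_j\mathcal P^s(E_j)\le\sum_j\Phi^s(E_j,R_j).
\]
Taking the infimum over all admissible covers yields $\mathcal P^s(A)\le\MM^s_\infty(A)$. I do not expect a genuine obstacle here. The only point needing care is the separation property of disjoint closed balls in $\R^d$, which is exactly what lets us avoid the loss of $2^{-s}$.
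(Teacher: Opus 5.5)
Your proof is correct and follows essentially the same route as the paper. The only substantive step is the Euclidean bound $\mathcal P^s_{R}(E)\le\Phi^s(E,R)$; the paper takes it from the identity \eqref{content:exactPhi} in Lemma~\ref{lem:capacity-packing}, which rests on the same observation that disjoint closed balls in $\R^d$ satisfy $|x_i-x_j|>r_i+r_j$, and you reprove it inline. The paper also inserts $\mathcal P^s_0(A_j)\le\mathcal P^s_{R_j}(A_j)$ directly into the Method I definition of $\mathcal P^s$ instead of invoking countable subadditivity, which is an immaterial difference.
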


In particular, $\MM^s_\infty$, $\MM^s$, and $\mathcal P^s$ have the
same null sets, and
\begin{equation}\label{content:dimension}
 \inf\{s>0:\MM^s_\infty(A)=0\}=\dimP A.
\end{equation}

Our second main result gives equality between labelled packing
content and labelled packing measure. Let $(S_i)_{i\in I}$ be a finite
family of contracting similarities of $\R^d$, and let $\Sigma_A$ be
the subshift of finite type determined by a zero--one matrix
$A=(A_{ij})_{i,j\in I}$. With $\Pi$ denoting the coding map, write
\[
 F_A=\Pi(\Sigma_A),\qquad
 F_A^w=\Pi(\Sigma_A\cap[w]),\qquad
 G_i=\bigcup_{j:A_{ij}=1}F_A^j,
\]
where $[w]$ is the cylinder corresponding to a finite nonempty word
$w$. The precise definitions of $\Sigma_A$, $\Pi$, $[w]$, and
irreducibility of $A$ are given in Section~\ref{content:section}.

\begin{theorem}\label{content:subshift}
Let $A$ be irreducible and let $s=\dimH F_A>0$. Then
$\dimP F_A=s$. For $K=F_A^w$ or $K=G_i$ and every
$0<\delta<\infty$,
\begin{equation}\label{content:subshift-equality}
 \MM^s_\infty(K)=\MM^s_\delta(K)=\MM^s(K)<\infty.
\end{equation}
\end{theorem}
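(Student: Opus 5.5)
We follow the Farkas--Fraser strategy for Hausdorff content, adapted to the covering formula \eqref{content:full}. By Corollary~\ref{content:basic} we already have $\MM^s_\infty(K)\le\MM^s_\delta(K)\le\MM^s(K)\le C(s)\mathcal P^s(K)$. So the whole statement reduces to three tasks: showing $\dimP F_A=s$ and $\mathcal P^s(K)<\infty$, and proving the reverse inequality $\MM^s(K)\le\MM^s_\infty(K)$.

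\textbf{Dimension and finiteness.} For the equality $\dimP F_A=\dimH F_A=s$, we use the standard fact that, for an irreducible graph-directed system of similarities, the upper box dimension of each piece equals its Hausdorff dimension. This holds without separation, as in Falconer's implicit theorem for self-similar sets and its graph-directed analogue; it rests on the sets $F_A^w$ being similar copies of the finitely many $G_i$. Since $\dimP\le\overline{\dim}_B$, we get $\dimP F_A=s$. For finiteness, a packing of small balls centred in $K$ can be estimated against a finite cover of $K$ by small similar copies $S_w(G_j)$. Alternatively, and more directly, $\MM^s_\infty(K)\le\Phi^s(K,|K|)$. This quantity is finite: by a volume argument together with the upper $s$-regularity bound $\#\{\text{disjoint $r$-balls centred in }K\}\lesssim r^{-s}$, which holds for irreducible similarity systems, the sum is bounded scale by scale. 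A dyadic grouping of the radii then makes the geometric series converge. I would instead obtain $\MM^s(K)<\infty$ from $\MM^s\le C(s)\mathcal P^s$ together with $\mathcal P^s(K)<\infty$. The latter I would derive from $\mathcal P^s\le\mathcal P^s_0$ and the regularity bound.

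\textbf{Scaling and the reverse inequality.} The key structural observation is scaling. If $S$ is a similarity of ratio $c$, then $\Phi^s(S(E),cR)=c^s\Phi^s(E,R)$, hence $\MM^s_\infty(S(E))=c^s\MM^s_\infty(E)$, and similarly for $\MM^s_\delta$ and $\MM^s$. Now fix $\varepsilon>0$ and a cover $\{(E_j,R_j)\}$ of $K$ with $\sum_j\Phi^s(E_j,R_j)\le\MM^s_\infty(K)+\varepsilon$. Choose a long word $v$ such that $S_v(K)\subset K'$ is an admissible sub-piece, where $K'$ ranges over the finitely many types $F_A^w$ and $G_i$, and such that the ratio $r_v$ satisfies $r_v\sup_j R_j\le\delta$ after truncation. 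Then $S_v$ maps the cover to a $\delta$-cover of $S_v(K)$ of cost $r_v^s(\MM^s_\infty(K)+\varepsilon)$.

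\textbf{Main obstacle.} The difficulty is that countably many $R_j$ need not be uniformly bounded, and that $K$ must be recovered from its small copies. I would handle this as in \cite{FF}. Decompose $K$ as a union of $S_w(G_{t(w)})$ over words $w$ of length $n$, so that $K$ is covered by images of the base pieces. First establish $\MM^s(G_i)\le\MM^s_\infty(G_i)\cdot\lambda_i$-type relations using the Perron--Frobenius eigenvector $(\MM^s(G_i))_i$, which is additive across the decomposition because $\mathcal P^s$ (hence $\MM^s$) restricted to $F_A$ is, up to the null overlaps, the self-similar measure at dimension $s$. The main step is to show that the vector $u_i=\MM^s(G_i)$ and the vector $v_i=\MM^s_\infty(G_i)$ satisfy $u\le v$. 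Both are scaled by the same substitution matrix: $u$ exactly, by additivity of the Borel measure $\MM^s$ on $K$ at dimension $s$ with overlaps null (Schief/Farkas--Fraser argument), and $v$ only sub-additively. Given a near-optimal cover of $G_i$ by sets with large labels, each big set $E_j$ is replaced by its intersection with level-$n$ pieces. For pieces of large label, I use $\Phi^s(E,R)\ge\sum$ over a packing inside $E$, together with the additivity of the packing structure, to show that splitting does not increase the cost in the limit. This comparison—ensuring that replacing large-label sets by rescaled small-label covers loses nothing—is the step I expect to be hardest. I would first try to prove it via a density argument: the lower density of $\MM^s|_K$ at $\mu$-almost every point combined with a Vitali-type selection inside each $E_j$.
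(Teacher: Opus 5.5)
\textbf{There is a genuine gap, in the step that recovers $K$ from its small copies.} Your reduction via Corollary~\ref{content:basic}, the dimension step via an implicit-theorem argument, and the idea of pushing a near-optimal cover (labels truncated to $\diam K$) forward by a similarity of small ratio are all sound. The recovery step, however, fails as you set it up.

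You rely on the level-$n$ decomposition $G_i=\bigcup_{j:A_{ij}=1}S_jG_j$. You also claim that $(\MM^s(G_i))_i$ is scaled exactly by the matrix $(a_j^sA_{ij})$ because overlaps are $\MM^s$-null. Without separation this is false. Take the full shift on $\{1,2,3\}$ with $S_1(x)=S_3(x)=x/2$ and $S_2(x)=x/2+1/2$ on $\R$. Then $G_i=F_A=[0,1]$ and $s=1$. The pieces $S_1G_1$ and $S_3G_3$ coincide, and $\sum_ja_j^s=3/2$, so $\MM^s(G_i)\neq\sum_ja_j^s\MM^s(G_j)$. Whenever $s$ is below the similarity dimension, the total weight $\sum_{|w|=n}a_w^s$ of the level-$n$ pieces is unbounded. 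Transporting the cover onto all of them therefore inflates the cost without bound.

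Your fallback is to split each carrier $E_j$ along level-$n$ pieces and show this ``does not increase the cost in the limit''. You leave this unproved, and there is no mechanism for it. Packings inside touching or overlapping pieces $E_j\cap S_wG_{t(w)}$ cannot be merged into one admissible configuration for $\Phi^s(E_j,R_j)$.

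\textbf{The missing idea.} Abandon the IFS decomposition and construct, for each $\theta>0$, \emph{pairwise disjoint} similar copies $T_nG_i\subset G_i$ of ratios $b_n\le\theta$ covering $G_i$ up to an $\MM^s$-null set. Disjointness and $\MM^s(T_nG_i)=b_n^s\MM^s(G_i)$ force $\sum_nb_n^s=1$ when $0<\MM^s(G_i)<\infty$; the case $\MM^s(G_i)=0$ is trivial. Pushing the whole near-optimal cover forward by every $T_n$ then gives a $(\theta\diam G_i)$-cover of $\bigcup_nT_nG_i$ of cost at most $\MM^s_\infty(G_i)+\varepsilon$. No carrier has to be split.

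The paper obtains these copies from two ingredients:
\begin{itemize}
\item the local-copy property \eqref{content:local-copies}, which irreducibility supplies;
\item a greedy Vitali-type selection of copies, made to exhaust $\MM^s$-almost all of $G_i$ by the bound $\liminf_{r\downarrow0}\MM^s(G_i\cap B(x,r))/(2r)^s\le C(s)$ almost everywhere. This bound comes from $\MM^s\le C(s)\mathcal P^s$ and Mattila's packing-density theorem.
\end{itemize}
Your density/Vitali instinct belongs here, applied to copies of the whole set rather than inside the carriers $E_j$.

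\textbf{A second gap, in finiteness.} The bound $\#\{\text{disjoint $r$-balls centred in $K$}\}\lesssim r^{-s}$ gives only an $O(1)$ contribution per dyadic range of radii. Summing over scales therefore diverges rather than forming a geometric series, so covering-number bounds alone do not control $\mathcal P^s_0$. The paper instead places, in each ball of an arbitrary packing, a copy $T_\ell E\subset E\cap\overline{B}(x_\ell,r_\ell/2)$ of ratio $b_\ell\ge\kappa r_\ell/(2D)$. These copies generate a strongly separated self-similar subset of $E$, whose dimension is at most $s$. Hence $\sum_\ell b_\ell^s\le1$, and so $\sum_\ell(2r_\ell)^s\le(4D/\kappa)^s$ uniformly over all packings.
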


\begin{corollary}\label{content:strongM}
Let $E\subset\R^d$ be a non-singleton self-similar attractor. Then
$s:=\dimH E=\dimP E>0$, and for every finite $\delta>0$,
\begin{equation}\label{content:strongEquality}
 \MM^s_\infty(E)=\MM^s_\delta(E)=\MM^s(E)<\infty.
\end{equation}
No separation condition is assumed.
\end{corollary}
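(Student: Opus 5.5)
Corollary~\ref{content:strongM} is a special case of Theorem~\ref{content:subshift}, so the plan is to encode the self-similar attractor as a full-shift set and check the theorem's hypotheses. Let \(E\) be the attractor of a finite family \((S_i)_{i\in I}\) of contracting similarities of \(\R^d\). Take \(A\) to be the all-ones matrix on \(I\), so that \(\Sigma_A=I^{\N}\) is the full shift. Every row of \(A\) is nonzero and every transition is allowed, so \(A\) is irreducible. The coding map \(\Pi\) sends \(I^{\N}\) onto the attractor, since \(\Pi(\Sigma_A)\) is a nonempty compact set satisfying \(\Pi(\Sigma_A)=\bigcup_i S_i(\Pi(\Sigma_A))\), and the attractor is unique. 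Hence \(F_A=E\). Moreover, for any \(i\in I\),
\[
 G_i=\bigcup_{j:A_{ij}=1}F_A^j=\bigcup_{j\in I}\Pi([j])=\Pi(\Sigma_A)=E ,
\]
so \(E\) has exactly the form \(K=G_i\) covered by Theorem~\ref{content:subshift}.

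It remains to verify the hypothesis \(s=\dimH E>0\). Because \(E\) is not a singleton, it contains two distinct points \(x\ne y\). We will show that \(E\) contains a Cantor-type subset of positive dimension. Since \(x\ne y\), some map moves a point, and there exist two words \(u\ne v\) of the same length \(n\) such that \(S_u(E)\) and \(S_v(E)\) are disjoint or at least distinct. An alternative route is to take the fixed points \(p_i\) of the \(S_i\). Not all \(p_i\) can coincide, because if every \(S_i\) fixed the same point \(p\), the attractor would be \(\{p\}\). So there are \(i\ne j\) with \(p_i\ne p_j\), and then \(S_i\ne S_j\).

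For a sufficiently large power \(m\), the images \(S_i^m(E)\) lie within a small neighbourhood of \(p_i\), and likewise \(S_j^m(E)\) lies near \(p_j\). Once \(\mathrm{Lip}(S_i)^m|E|\) and \(\mathrm{Lip}(S_j)^m|E|\) are both smaller than \(|p_i-p_j|/3\), the two sets \(S_i^m(E)\) and \(S_j^m(E)\) are disjoint. The subsystem \(\{S_i^m,S_j^m\}\) then has an attractor \(E'\subset E\), and \(E'\) satisfies the strong separation condition. By the Moran--Hutchinson formula, \(\dimH E'\) is the positive solution of \(r_i^{ms}+r_j^{ms}=1\), where \(r_i,r_j\) are the contraction ratios. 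This solution is strictly positive, so \(\dimH E\ge \dimH E'>0\).

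Theorem~\ref{content:subshift} then gives \(\dimP E=s\) together with \eqref{content:strongEquality} for every finite \(\delta>0\). The only step requiring care is the positivity of the dimension. A possible concern is degenerate similarities with contraction ratio \(0\), which would be constant maps. If the standing convention allows these, the argument above needs adjustment, but with the usual convention of ratios in \((0,1)\) the fixed-point argument suffices. No separation condition is used anywhere, because Theorem~\ref{content:subshift} assumes none.
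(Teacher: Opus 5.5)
Your proposal is correct and follows the paper's proof: you encode $E$ by the full shift so that $F_A=G_i=E$, and then invoke Theorem~\ref{content:subshift}. The only difference is the positivity step, and your preliminary remark about words $u\ne v$ is unjustified and should be dropped (you do not use it). You prove $s>0$ directly: fixed points $p_i\ne p_j$, which you should note lie in $E$, give disjoint copies $S_i^m(E)$ and $S_j^m(E)$, hence a strongly separated subsystem. The paper instead cites Lemma~\ref{content:follower-copies}, whose positivity step via Lemma~\ref{content:critical-finiteness} is the same two-separated-copies argument.
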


\begin{corollary}\label{content:graph}
Let $(E_i)_{i\in V}$ be the non-singleton compact components of a
finite strongly connected graph-directed system of contracting
similarities,
\[
 E_i=\bigcup_j\bigcup_{e\in\mathscr E_{ij}}S_e(E_j).
\]
Their common Hausdorff and packing dimension is $s>0$. For every
$i\in V$ and every finite $\delta>0$,
\[
 \MM^s_\infty(E_i)=\MM^s_\delta(E_i)=\MM^s(E_i)<\infty.
\]
No separation condition is assumed.
\end{corollary}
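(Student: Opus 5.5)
The plan is to reduce Corollary~\ref{content:graph} to Theorem~\ref{content:subshift} by recoding the graph-directed system as a subshift of finite type on the edge alphabet. Let $I$ be the set of all edges $e\in\mathscr E_{ij}$, ranging over all $i,j\in V$, and write $t(e)=j$ and $o(e)=i$ for the terminal and initial vertices of $e$. Put $A_{ef}=1$ exactly when $t(e)=o(f)$. An infinite admissible word $e_1e_2\cdots$ is then an infinite path in the graph, and the coding map is $\Pi(e_1e_2\cdots)=\lim_n S_{e_1}\circ\cdots\circ S_{e_n}(x_0)$.

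By uniqueness of the compact solution of the graph-directed equations, the sets $\widetilde E_i:=\Pi(\{\text{paths starting at } i\})$ coincide with the $E_i$. Indeed, $(\widetilde E_i)_{i\in V}$ satisfies the same set equations: it is compact, and it is nonempty because strong connectivity gives every vertex an outgoing edge. Consequently $F_A^e=S_e(E_{t(e)})$, and
\[
 G_e=\bigcup_{f:\,A_{ef}=1}F_A^f=\bigcup_{f:\,o(f)=t(e)}S_f(E_{f'})=E_{t(e)},
\]
where $f'$ denotes $t(f)$. So each $E_i$ equals $G_e$ for any edge $e$ with $t(e)=i$. Such an edge exists because the graph is strongly connected. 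In the degenerate case of one vertex with no edges into it, there are no components to treat; I will dispose of this by assuming the standard convention that each vertex has an outgoing edge.

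Next I will check irreducibility of $A$. Given edges $e,f$, strong connectivity supplies a path from $t(e)$ to $o(f)$, which yields an admissible word from $e$ to $f$. This is exactly the irreducibility of Section~\ref{content:section}. I expect the main care to be needed here: matching the precise definition of irreducibility given there, for instance whether it asks for $A^n_{ef}>0$ with some $n\ge1$ depending on $(e,f)$. The path argument gives precisely that.

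Finally, I need $s=\dimH F_A>0$. Since the components are non-singletons, $F_A=\bigcup_e S_e(E_{t(e)})$ contains a non-singleton set, so it is not a single point. For a subshift attractor of contracting similarities this forces positive Hausdorff dimension: $F_A$ contains a non-singleton $E_i$, which contains similar copies forming a perfect set with uniform contraction, and standard mass distribution or the Farkas--Fraser framework gives $\dimH F_A>0$. I would rather invoke the fact used in Corollary~\ref{content:strongM}. I also need the dimensions of all the $E_i$ to be equal to $s=\dimH F_A$. Each $E_i$ is a finite union of similar copies of various $E_j$, and $F_A$ is a finite union of copies of the $E_j$; by strong connectivity each $E_j$ contains a similar copy of each $E_k$. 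Hence all $E_i$ have the same Hausdorff dimension, and by Theorem~\ref{content:subshift} this equals $\dimP$ as well.

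Applying Theorem~\ref{content:subshift} with $K=G_e=E_i$ then gives $\MM^s_\infty(E_i)=\MM^s_\delta(E_i)=\MM^s(E_i)<\infty$ for every finite $\delta>0$. Nothing in this argument uses separation, so none is assumed.
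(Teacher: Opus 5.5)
Your proposal is correct and follows the paper's proof. The steps match: recode on the edge alphabet with $A_{ef}=1$ iff $t(e)=o(f)$, get irreducibility from strong connectivity, identify $G_e=E_{t(e)}$ using an incoming edge at each vertex, and apply Lemma~\ref{content:follower-copies} and Theorem~\ref{content:subshift}. For positivity of $s$, Lemma~\ref{content:follower-copies} is the cleanest justification: some $G_e=E_{t(e)}$ is not a singleton, so no $G_e$ is, and the lemma gives $0<\dimH G_e=\dimP G_e=\dimH F_A=\dimP F_A$. That makes your mass-distribution sketch and separate dimension-comparison argument unnecessary.
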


The proof uses the fact that every sufficiently small ball centred
on $G_i$ contains a similarity image of $G_i$ whose diameter is
comparable to the radius of the ball. Irreducibility gives this
property, which implies finiteness at the common Hausdorff and
packing dimension. When $\MM^s(G_i)>0$, a packing-density estimate
provides pairwise disjoint similarity images of arbitrarily small
diameter covering $G_i$ up to a set of zero $\MM^s$-measure; their
ratios satisfy $\sum_n b_n^s=1$. By similarity covariance of $\Phi^s$,
applying these similarities to a labelled cover with bounded labels
preserves its total cost and bounds all labels by any prescribed
positive scale. This proves equality of content
and measure on $G_i$, and similarity covariance gives the cylinder
assertion. The case $\MM^s(G_i)=0$ follows from~\eqref{content:chain}.

\section{Fundamental Properties}
\label{sec:fundamental-properties-M}

This section establishes the basic properties of \(\mathcal M^s_\delta\),
the labelled packing content \(\mathcal M^s_\infty\), and the labelled
packing measure \(\mathcal M^s\), including their measurability on
compact hyperspaces.

\subsection{Basic properties}

\begin{lemma}
\label{lem:capacity-packing}
For \(s>0\), \(0<R<\infty\), and \(E\subset X\) with \(|E|\le R\), 
\begin{equation}
\label{eq:capacity-packing}
 \Phi^s(E,R)\le\mathcal P_R^s(E)\le2^s\Phi^s(E,R).
\end{equation}
If \(X=\mathbb R^d\), then
\begin{equation}\label{content:exactPhi}
 \Phi^s(E,R)=\mathcal P^s_R(E).
\end{equation}
\end{lemma}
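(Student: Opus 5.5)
The plan is to compare the two suprema directly, configuration by configuration. For each inequality I will turn an admissible family for one quantity into an admissible family for the other, losing either an arbitrarily small amount or the factor $2^s$. The case $E=\varnothing$ is trivial, since both sides vanish. One preliminary remark is needed throughout. $\mathcal P^s_R(E)$ allows countable packings while $\Phi^s(E,R)$ uses finite families, but a countable sum of nonnegative terms is the supremum of its finite partial sums. So in both directions it suffices to treat finite families.

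\emph{First inequality, $\Phi^s(E,R)\le\mathcal P^s_R(E)$.} Fix an admissible finite family $(x_i,r_i)_{i=1}^N$ for $\Phi^s(E,R)$, so $x_i\in\overline E^{\widehat X}$, $0<r_i\le R$ and $d(x_i,x_j)\ge r_i+r_j$. Fix $\varepsilon\in(0,1)$ and let $\eta>0$ be small. Choose $y_i\in E$ with $d(x_i,y_i)<\eta$, and set $r_i'=(1-\varepsilon)r_i$. Then
\[
d(y_i,y_j)>r_i+r_j-2\eta\ge r_i'+r_j'+\varepsilon(r_i+r_j)-2\eta>r_i'+r_j'
\]
as soon as $2\eta<\varepsilon\min_i r_i$. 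Hence the closed balls $B(y_i,r_i')$ in $X$ are pairwise disjoint: a common point $z$ would give $d(y_i,y_j)\le r_i'+r_j'$ by the triangle inequality. These balls have centres in $E$ and radii at most $R$, so they form an admissible packing. Therefore $\mathcal P^s_R(E)\ge(1-\varepsilon)^s\sum_i(2r_i)^s$. Letting $\varepsilon\to0$ and taking the supremum over families gives the claim. The key point is that strict slack is needed to pass from the closure in the completion to genuine centres in $E$, and shrinking by $(1-\varepsilon)$ supplies it.

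\emph{Second inequality, $\mathcal P^s_R(E)\le2^s\Phi^s(E,R)$.} This is where the constant $2^s$ enters, and it is the one step where the general metric setting matters. Disjointness of closed balls does \emph{not} give $d(x_i,x_j)\ge r_i+r_j$ in a general metric space. What it does give is this: since $x_j\notin B(x_i,r_i)$ and $x_i\notin B(x_j,r_j)$,
\[
d(x_i,x_j)>\max(r_i,r_j)\ge\tfrac12(r_i+r_j).
\]
So, given a finite packing $\{B(x_i,r_i)\}$ with $x_i\in E$ and $r_i\le R$, the halved radii $r_i/2\le R$ satisfy the separation condition in \eqref{eq:label-cost}. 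This yields
\[
\Phi^s(E,R)\ge\sum_i r_i^s=2^{-s}\sum_i(2r_i)^s .
\]
Taking the supremum, first over finite subfamilies and then over packings, proves the second inequality.

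\emph{Euclidean case.} When $X=\mathbb R^d$ the space is complete, so $\overline E^{\widehat X}=\overline E$. Moreover, disjoint closed balls in $\mathbb R^d$ satisfy $|x_i-x_j|>r_i+r_j$. Otherwise the point on the segment $[x_i,x_j]$ at distance $\min(r_i,|x_i-x_j|)$ from $x_i$ lies in both balls. Thus every admissible packing for $\mathcal P^s_R(E)$ is directly admissible for $\Phi^s(E,R)$, with no halving. This gives $\mathcal P^s_R(E)\le\Phi^s(E,R)$, and combined with the first inequality it yields \eqref{content:exactPhi}.
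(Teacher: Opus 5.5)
Your proposal is correct and follows the paper's proof essentially step for step. The paper also shrinks radii by a factor $t\in(0,1)$ and moves the centres from $\overline E^{\widehat X}$ into $E$ by less than $(1-t)\min_i r_i/4$. It obtains the $2^s$ bound by halving radii via $d(x_i,x_j)>\max\{r_i,r_j\}$, and it uses $|x_i-x_j|>r_i+r_j$ for disjoint closed Euclidean balls. Your segment argument for that last fact just supplies a detail the paper states without proof.
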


\begin{proof}
The case \(E=\varnothing\) is immediate. Fix a finite admissible
configuration \((x_i,r_i)_{i=1}^N\) in
\(\overline E^{\widehat X}\) and \(t\in(0,1)\). Choose \(y_i\in E\)
with \(d(x_i,y_i)<(1-t)\min_i r_i/4\). Then
\[
 d(y_i,y_j)
 \ge d(x_i,x_j)-d(x_i,y_i)-d(x_j,y_j)
 >t(r_i+r_j)\qquad(i\ne j).
\]
Thus \((y_i,tr_i)_i\) is admissible with centres in \(E\), and the
closed balls \(B(y_i,tr_i)\) are pairwise disjoint. Letting
\(t\uparrow1\) and taking the supremum proves  \(\Phi^s(E,R)\le\mathcal P_R^s(E)\).

Conversely, pairwise disjoint closed balls satisfy
\[
 d(x_i,x_j)>\max\{r_i,r_j\}\ge(r_i+r_j)/2.
\]
Consequently, replacing each radius \(r\) in an \(R\)-packing of \(E\) 
by \(r/2\) yields an admissible separated configuration.
Taking finite subsums and then
suprema yields \(\mathcal P_R^s(E)\le2^s\Phi^s(E,R)\).
In Euclidean space, disjoint closed balls satisfy
\(|x_i-x_j|>r_i+r_j\), so finite subsums give
\(\mathcal P_R^s(E)\le\Phi^s(E,R)\) directly. This proves
\eqref{content:exactPhi}.
\end{proof}

\begin{proof}[Proof of Corollary~\ref{content:basic}]
By Lemma~\ref{lem:capacity-packing}, every labelled cover
$A\subseteq\bigcup_j A_j$ satisfies
\[
 \mathcal P^s(A)\le\sum_j\mathcal P^s_0(A_j)
 \le\sum_j\mathcal P^s_{R_j}(A_j)
 =\sum_j\Phi^s(A_j,R_j).
\]
Taking the infimum gives the first inequality in~\eqref{content:chain};
the next two follow from the admissible covers, and the last from
Theorem~\ref{thm:comparison}.
\end{proof}

When two ambient spaces occur, we indicate them by subscripts. 

\begin{proposition}
\label{prop:Phi-basic}
Let \(s>0\), let \(X\) and \(Y\) be metric spaces, and let
\(E\subset X\) and \(0<R<\infty\) satisfy \(|E|\le R\).
\begin{enumerate}
\item[\textup{(i)}]
If \(E\subseteq F\subseteq X\) and \(|F|\le R\), then
\[
        \Phi^s(E,R)\leq\Phi^s(F,R).
\]
Moreover,
\[
        \Phi^s(E,R)=\Phi^s(\overline E,R).
\]

\item[\textup{(ii)}]
If \(E_1\subseteq E_2\subseteq\cdots\) and
\(E=\bigcup_{k=1}^\infty E_k\), then
\[
        \Phi^s(E_k,R)\uparrow\Phi^s(E,R).
\]

\item[\textup{(iii)}]
The map
\[
        (K,R)\longmapsto\Phi^s(K,R),
        \qquad K\in\mathcal K(X),\quad |K|\le R<\infty,\quad R>0,
\]
is jointly lower semicontinuous, with the Hausdorff metric on the first
coordinate and the usual metric on the second.

\item[\textup{(iv)}]
If \(f:X\to Y\) is \(L\)-Lipschitz, \(L>0\), then
\[
        \Phi_Y^s(f(E),LR)\leq L^s\Phi_X^s(E,R).
\]

\item[\textup{(v)}]
If \(f:X\to Y\) is a surjective similarity of ratio \(\lambda>0\), then
\[
        \Phi_Y^s(f(E),\lambda R)=\lambda^s\Phi_X^s(E,R).
\]
\end{enumerate}
\end{proposition}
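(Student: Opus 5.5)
The plan is to work directly from definition~\eqref{eq:label-cost}. The key observation is that $\Phi^s(E,R)$ is a supremum over \emph{finite} configurations in $\overline E^{\widehat X}$, and the admissibility conditions $0<r_i\le R$, $d(x_i,x_j)\ge r_i+r_j$ are closed conditions on the centres.

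For (i), monotonicity is immediate: $\overline E^{\widehat X}\subseteq\overline F^{\widehat X}$, so every configuration admissible for $E$ is admissible for $F$. The identity $\Phi^s(E,R)=\Phi^s(\overline E,R)$ holds because the closure of $\overline E$ (closure in $X$) taken in $\widehat X$ coincides with $\overline E^{\widehat X}$. Also $|\overline E|=|E|\le R$, so both sides are defined.

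For (ii), monotonicity gives $\Phi^s(E_k,R)\uparrow L\le\Phi^s(E,R)$. For the reverse inequality, fix a finite admissible configuration $(x_i,r_i)_{i=1}^N$ in $\overline E^{\widehat X}$ and $t\in(0,1)$. As in the proof of Lemma~\ref{lem:capacity-packing}, choose $y_i\in E$ with $d(x_i,y_i)<(1-t)\min_i r_i/4$; then $(y_i,tr_i)$ is admissible. Since the $y_i$ are finitely many points of $E=\bigcup_k E_k$, they all lie in a single $E_k$, which gives $L\ge t^s\sum(2r_i)^s$. Letting $t\uparrow1$ and taking the supremum yields $L\ge\Phi^s(E,R)$.

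Part (iii) is the only step needing care. Suppose $K_n\to K$ in the Hausdorff metric and $R_n\to R$, with $|K_n|\le R_n$. Fix an admissible configuration $(x_i,r_i)$ for $(K,R)$ and $t\in(0,1)$; we may take $x_i\in K$ because $K$ is compact, hence complete, and so closed in $\widehat X$. For large $n$, pick $y_i^n\in K_n$ with $d(x_i,y_i^n)<(1-t)\min_i r_i/4$, and use the radii $t r_i\min\{1,R_n/R\}$. These radii are at most $R_n$, and the separation argument of Lemma~\ref{lem:capacity-packing} still applies since the radii are no larger than $tr_i$. Hence
\[
\liminf_n\Phi^s(K_n,R_n)\ge t^s\sum_i(2r_i)^s,
\]
because $\min\{1,R_n/R\}\to1$. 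Letting $t\uparrow1$ and taking the supremum over configurations gives joint lower semicontinuity. The main obstacle here is only bookkeeping: shrinking the radii must handle the center perturbation and the possibility $R_n<R$ simultaneously. The multiplicative shrinking above does both at once.

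For (iv), $f$ extends to an $L$-Lipschitz map $\hat f:\widehat X\to\widehat Y$ with $\hat f(\overline E^{\widehat X})\subseteq\overline{f(E)}^{\widehat Y}$, and $|f(E)|\le LR$. The natural approach is to transport configurations, but the Lipschitz condition points the wrong way for this: it bounds $d(f(x),f(y))$ from above, not below. So I would instead pull back. Given an admissible configuration $(z_i,\rho_i)$ for $(f(E),LR)$, approximate each $z_i$ by a point $f(y_i)$ with $y_i\in E$, shrinking the radii by a factor $t$ as before. Then set $r_i=\rho_i/L$ and $x_i=y_i$. The configuration is admissible in $X$, since $d(y_i,y_j)\ge d(f(y_i),f(y_j))/L\ge t(\rho_i+\rho_j)/L$, and the radii satisfy $t\rho_i/L\le R$. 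Its cost is $\sum(2t\rho_i/L)^s=t^sL^{-s}\sum(2\rho_i)^s$, so $\Phi^s_Y(f(E),LR)\le L^s\Phi^s_X(E,R)$ after $t\uparrow1$.

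For (v), apply (iv) to $f$ with $L=\lambda$, and to $f^{-1}$ (a surjective similarity of ratio $\lambda^{-1}$) with the set $f(E)$ and label $\lambda R$. This gives the two inequalities, hence equality.
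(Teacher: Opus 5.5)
Your proposal is correct and follows essentially the paper's proof: the paper first reduces, via the perturb-and-shrink argument of Lemma~\ref{lem:capacity-packing}, to configurations centred in $E$, and then argues (i)--(v) as you do, including pulling configurations back through the Lipschitz map in (iv) and applying (iv) to $f$ and $f^{-1}$ in (v); you instead repeat that perturbation step inline in each part. The only cosmetic difference is in (iii), where you scale the radii by $\min\{1,R_n/R\}$, whereas the paper fixes $\theta<1$ and notes that $\theta R<R_m$ for large $m$.
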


\begin{proof}
By Lemma~\ref{lem:capacity-packing}, the supremum defining
\(\Phi^s(E,R)\) may be taken over configurations centred in \(E\).

Assertion (i) follows from inclusion of the admissible configurations
and the identity
\(\overline{\overline E}^{\widehat X}=\overline E^{\widehat X}\).
For (ii), every finite configuration centred in \(E\) has all its centres
in some \(E_k\). Taking suprema and using monotonicity gives the result;
the case \(E=\varnothing\) is immediate.

For (iii), let \(K_m\to K\) in the Hausdorff metric and \(R_m\to R\),
where \(|K_m|\le R_m<\infty\) and \(R_m>0\). Given
\(0\le a<\Phi^s(K,R)\), choose a finite admissible configuration
\(\{(x_i,r_i)\}_{i=1}^N\) in \(K\) and \(\theta\in(0,1)\) such that
\[
        \sum_{i=1}^N(2\theta r_i)^s>a.
\]
Choose \(x_i^{(m)}\in K_m\) with \(x_i^{(m)}\to x_i\). For all
sufficiently large \(m\),
\[
        d(x_i^{(m)},x_j^{(m)})\ge\theta(r_i+r_j)
        \quad(i\ne j),
        \qquad \theta r_i\le\theta R<R_m.
\]
Consequently,
\[
        \liminf_{m\to\infty}\Phi^s(K_m,R_m)
        \ge\sum_{i=1}^N(2\theta r_i)^s>a,
\]
which proves lower semicontinuity.

For (iv), \(|f(E)|\le LR\). By Lemma~\ref{lem:capacity-packing}, it suffices to
consider finite admissible configurations
\(\{(f(x_i),\rho_i)\}_i\), with \(x_i\in E\) and \(\rho_i\le LR\).
Since
\[
        d_X(x_i,x_j)
        \ge L^{-1}d_Y(f(x_i),f(x_j))
        \ge \rho_i/L+\rho_j/L,
        \qquad \rho_i/L\le R,
\]
the configuration \(\{(x_i,\rho_i/L)\}_i\) is admissible for
\(\Phi_X^s(E,R)\). Taking suprema proves (iv), including the empty
carrier. Applying (iv) to \(f\) and \(f^{-1}\) gives (v).
\end{proof}

\begin{proposition}
\label{prop:M-basic-properties}
Let \(s>0\), and let \(X\) and \(Y\) be metric spaces.
\begin{enumerate}
\item[\textup{(i)}]
\(\mathcal M^s\) is a metric outer measure on \(X\). In particular, all
Borel subsets of \(X\) are \(\mathcal M^s\)-measurable.

\item[\textup{(ii)}]
\(\mathcal M^s\) is Borel regular: for every \(A\subseteq X\), there is
a Borel set \(B\supseteq A\) such that
\[
        \mathcal M^s(B)=\mathcal M^s(A).
\]

\item[\textup{(iii)}]
If \(f:X\to Y\) is \(L\)-Lipschitz, $L>0$, then
\[
        \mathcal M_Y^s(f(A))
        \leq L^s\mathcal M_X^s(A)
        \qquad(A\subseteq X).
\]

\item[\textup{(iv)}]
If \(f:X\to Y\) is a surjective similarity of ratio \(\lambda>0\), then
\[
        \mathcal M_Y^s(f(A))=\lambda^s\mathcal M_X^s(A).
\]
\end{enumerate}
\end{proposition}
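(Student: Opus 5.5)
The plan is to verify each item directly from the definitions \eqref{eq:label-content}--\eqref{eq:label-measure}, using Proposition~\ref{prop:Phi-basic} for the behaviour of the local cost \(\Phi^s\). We take it as understood that \(\mathcal M^s_\delta\) increases as \(\delta\downarrow0\), since shrinking \(\delta\) shrinks the class of admissible labelled covers; this justifies the equality of the limit and the supremum in \eqref{eq:label-measure}.

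For (i), we first check that each \(\mathcal M^s_\delta\) is an outer measure. We have \(\mathcal M^s_\delta(\varnothing)=0\), using empty carriers with \(\Phi^s(\varnothing,R)=0\). Monotonicity holds because any cover of a larger set covers a smaller one. For countable subadditivity, given \(A=\bigcup_k A_k\) and \(\varepsilon>0\), we choose for each \(k\) a labelled \(\delta\)-cover of \(A_k\) with cost at most \(\mathcal M^s_\delta(A_k)+\varepsilon2^{-k}\), and concatenate these covers. Passing to the supremum over \(\delta\) shows that \(\mathcal M^s\) is an outer measure.

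For the metric property, let \(\dist(A,B)>0\) and \(0<\delta<\dist(A,B)\). Take any admissible cover \(\{(E_j,R_j)\}\) of \(A\cup B\) with \(R_j\le\delta\). Since \(|E_j|\le R_j<\dist(A,B)\), no \(E_j\) meets both \(A\) and \(B\). The subfamilies meeting \(A\) and meeting \(B\) are therefore disjoint, and they are admissible covers of \(A\) and of \(B\) respectively. Because \(\Phi^s\ge0\), this gives
\[
\sum_j\Phi^s(E_j,R_j)\ge\mathcal M^s_\delta(A)+\mathcal M^s_\delta(B).
\]
Taking the infimum and letting \(\delta\downarrow0\) yields additivity on positively separated sets. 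Carath\'eodory's criterion then gives Borel measurability.

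For (ii), we may assume \(\mathcal M^s(A)<\infty\); otherwise we take \(B=X\). For each \(n\), we choose a cover \(\{(E_{n,j},R_{n,j})\}_j\) of \(A\) with \(R_{n,j}\le1/n\) and cost at most \(\mathcal M^s_{1/n}(A)+1/n\). We then replace each \(E_{n,j}\) by its closure \(\overline{E_{n,j}}\) in \(X\). Closure does not change the diameter, so the labels remain admissible, and by Proposition~\ref{prop:Phi-basic}(i) the cost is unchanged. We set
\[
B:=\bigcap_n\bigcup_j\overline{E_{n,j}},
\]
which is Borel and contains \(A\). For each \(n\), the closed cover is admissible for \(B\), so
\[
\mathcal M^s_{1/n}(B)\le\mathcal M^s_{1/n}(A)+1/n\le\mathcal M^s(A)+1/n.
\]
Letting \(n\to\infty\) gives \(\mathcal M^s(B)\le\mathcal M^s(A)\), and monotonicity gives the reverse inequality.

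For (iii), take a cover \(\{(E_j,R_j)\}\) of \(A\) with \(|E_j|\le R_j\le\delta\). Then \(\{(f(E_j),LR_j)\}\) covers \(f(A)\), and \(|f(E_j)|\le L|E_j|\le LR_j\le L\delta\). By Proposition~\ref{prop:Phi-basic}(iv), which includes the empty carrier, each cost satisfies \(\Phi^s_Y(f(E_j),LR_j)\le L^s\Phi^s_X(E_j,R_j)\). Hence
\[
\mathcal M^s_{Y,L\delta}(f(A))\le L^s\mathcal M^s_{X,\delta}(A),
\]
and letting \(\delta\downarrow0\) proves (iii).

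For (iv), a surjective similarity of ratio \(\lambda\) is a bijection whose inverse is a similarity of ratio \(\lambda^{-1}\). Applying (iii) to \(f\) and to \(f^{-1}\), with the set \(f(A)\) in the second case, gives both inequalities.

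No step is a serious obstacle. The only points requiring care are choosing \(\delta<\dist(A,B)\) so that labels control diameters in (i), and checking that passing to closures preserves both admissibility and cost in (ii); the latter is exactly the role of Proposition~\ref{prop:Phi-basic}(i).
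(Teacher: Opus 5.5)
Your proposal is correct and follows essentially the same route as the paper. You concatenate near-optimal covers for subadditivity, separate carriers by taking \(\delta<\dist(A,B)\), build Borel hulls from closures of near-optimal covers via Proposition~\ref{prop:Phi-basic}(i), and push forward labelled covers with Proposition~\ref{prop:Phi-basic}(iv) for (iii)--(iv); your single-index construction in (ii), where the paper uses a double index \((m,k)\), is only an inessential simplification.
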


\begin{proof}
For each \(0<\delta<1\), the covering construction defines an outer
measure \(\mathcal M^s_\delta\). Thus, for any sequence \((A_j)\),
\[
        \mathcal M^s_\delta\Bigl(\bigcup_j A_j\Bigr)
        \le\sum_j\mathcal M^s_\delta(A_j)
        \le\sum_j\mathcal M^s(A_j).
\]
Letting \(\delta\downarrow0\) proves countable subadditivity of
\(\mathcal M^s\); monotonicity and the empty-set condition follow from
the definition. If \(\dist(A,B)>0\) and
\(0<\delta<\min\{1,\dist(A,B)\}\), every carrier with label at most
\(\delta\) meets at most one of \(A\) and \(B\). Splitting any cover
of \(A\cup B\) accordingly gives
\[
        \mathcal M^s_\delta(A\cup B)
        =\mathcal M^s_\delta(A)+\mathcal M^s_\delta(B).
\]
Passing to the limit proves (i).

For (ii), assume first that \(\mathcal M^s(A)<\infty\). For integers
\(m\ge2\) and \(k\ge1\), choose labelled covers
\[
        A\subseteq\bigcup_{j=1}^\infty E_j^{m,k},
        \qquad |E_j^{m,k}|\le R_j^{m,k}\le\frac1m,
        \qquad R_j^{m,k}>0,
\]
with
\[
        \sum_{j=1}^\infty\Phi^s(E_j^{m,k},R_j^{m,k})
        \le\mathcal M^s_{1/m}(A)+2^{-(m+k)}.
\]
Proposition~\ref{prop:Phi-basic}(i) allows each carrier to be replaced
by its closure without changing its label or cost. Hence the
\(F_\sigma\) sets
\[
        F_{m,k}:=\bigcup_{j=1}^\infty\overline{E_j^{m,k}}
\]
contain \(A\) and satisfy
\[
        \mathcal M^s_{1/m}(F_{m,k})
        \le\mathcal M^s_{1/m}(A)+2^{-(m+k)}.
\]
The Borel set \(B:=\bigcap_{m\ge2,\,k\ge1}F_{m,k}\) contains \(A\).
For each \(m\), monotonicity and the preceding estimate give
\[
        \mathcal M^s_{1/m}(A)
        \le\mathcal M^s_{1/m}(B)
        \le\mathcal M^s_{1/m}(A)+2^{-(m+k)}
        \qquad(k\ge1).
\]
Letting \(k\to\infty\) and then \(m\to\infty\) proves (ii). If
\(\mathcal M^s(A)=\infty\), take \(B=X\).

For (iii), let \(L>0\) and \(0<\delta<\min\{1,1/L\}\). Replacing
each pair \((E_j,R_j)\) in a labelled \(\delta\)-cover of \(A\) by
\((f(E_j),LR_j)\), Proposition~\ref{prop:Phi-basic}(iv) gives
\[
        \mathcal M^s_{Y,L\delta}(f(A))
        \le L^s\mathcal M^s_{X,\delta}(A).
\]
Let \(\delta\downarrow0\).  Applying (iii) to \(f\)
and \(f^{-1}\) proves (iv).
\end{proof}

\begin{proposition}
\label{content:outer-properties}
Let \(s>0\) and \(0<\delta<\infty\).
\begin{enumerate}
\item[\textup{(i)}]
\(\MM^s_\infty\) and \(\MM^s_\delta\) are outer measures on \(\R^d\).

\item[\textup{(ii)}]
For every \(A\subseteq\R^d\) and
\(Q\in\{\MM^s_\infty,\MM^s_\delta\}\), there is a Borel set
\(B\supseteq A\) such that
\[
        Q(B)=Q(A).
\]

\item[\textup{(iii)}]
If \(T:\R^d\to\R^d\) is a similarity of ratio \(b>0\), then
\begin{equation}\label{content:covariance}
        \MM^s_\infty(TA)=b^s\MM^s_\infty(A)
        \qquad(A\subseteq\R^d).
\end{equation}
\end{enumerate}
\end{proposition}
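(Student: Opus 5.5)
All three assertions follow from the covering structure of the definitions, in the same way as the corresponding parts of Proposition~\ref{prop:M-basic-properties}. The one point needing care is the Borel hull in (ii) when \(Q(A)<\infty\).

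For (i), fix \(Q\in\{\MM^s_\infty,\MM^s_\delta\}\). Since \(\Phi^s(\varnothing,R)=0\), we may cover \(\varnothing\) by empty carriers with any positive labels, so \(Q(\varnothing)=0\). Monotonicity holds because every admissible cover of a larger set is an admissible cover of any subset. For countable subadditivity, take \(A=\bigcup_k A_k\) with \(\sum_k Q(A_k)<\infty\) and fix \(\varepsilon>0\). Choose admissible labelled covers \((E_{k,j},R_{k,j})_j\) of \(A_k\) whose costs are at most \(Q(A_k)+\varepsilon2^{-k}\). Enumerate the doubly indexed family as a single sequence. It is an admissible cover of \(A\): the constraint \(|E|\le R\) is kept, and so is \(R\le\delta\) in the \(\MM^s_\delta\) case. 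Its total cost is at most \(\sum_kQ(A_k)+\varepsilon\).

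For (ii), if \(Q(A)=\infty\) take \(B=\R^d\). Otherwise, for each \(k\) choose an admissible cover \((E^k_j,R^k_j)_j\) of \(A\) with cost at most \(Q(A)+1/k\). By Proposition~\ref{prop:Phi-basic}(i), replacing \(E^k_j\) by its closure leaves the diameter, the label and the cost unchanged. Hence \(F_k:=\bigcup_j\overline{E^k_j}\) is an \(F_\sigma\) set containing \(A\) with \(Q(F_k)\le Q(A)+1/k\). Put \(B:=\bigcap_kF_k\). Then \(B\) is Borel, \(B\supseteq A\), and \(Q(A)\le Q(B)\le Q(F_k)\) for all \(k\), so \(Q(B)=Q(A)\).

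For (iii), a similarity of \(\R^d\) with ratio \(b>0\) is automatically surjective, since it is a scaled isometry of \(\R^d\). Given an admissible cover \((E_j,R_j)\) of \(A\), the family \((TE_j,bR_j)\) covers \(TA\) and satisfies \(|TE_j|=b|E_j|\le bR_j\). Because \(\MM^s_\infty\) allows every finite positive label, this family is admissible. By Proposition~\ref{prop:Phi-basic}(v), \(\Phi^s(TE_j,bR_j)=b^s\Phi^s(E_j,R_j)\). Taking infima gives \(\MM^s_\infty(TA)\le b^s\MM^s_\infty(A)\). Applying the same argument to \(T^{-1}\), which has ratio \(b^{-1}\), and to the set \(TA\) gives the reverse inequality.

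No step is a real obstacle. The only subtlety is that (iii) is stated only for \(\MM^s_\infty\): rescaling changes the label bound \(\delta\) to \(b\delta\), so \(\MM^s_\delta\) is covariant only up to this change of scale.
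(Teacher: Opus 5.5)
Your proposal is correct and follows the paper's proof essentially step for step: concatenating covers for (i), closing the carriers of near-optimal covers and intersecting the resulting \(F_\sigma\) sets for (ii), and transferring covers by \((E_j,R_j)\mapsto(TE_j,bR_j)\) together with Proposition~\ref{prop:Phi-basic}(v) for (iii). Applying the argument to \(T^{-1}\) for the reverse inequality is just an unpacked form of the paper's remark that this map is a bijection of admissible covers, and your check that similarities of \(\R^d\) are surjective makes explicit a hypothesis of Proposition~\ref{prop:Phi-basic}(v) that the paper uses without comment.
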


\begin{proof}
Fix \(Q\in\{\MM^s_\infty,\MM^s_\delta\}\). For (i), monotonicity
and \(Q(\varnothing)=0\) follow from the definitions; concatenating
covers gives countable subadditivity.

For (ii), take \(B=\R^d\) if \(Q(A)=\infty\). Otherwise, choose
admissible covers \((E_j^k,R_j^k)_j\) of \(A\) with costs at most
\(Q(A)+1/k\), and put
\[
        B:=\bigcap_{k=1}^\infty\bigcup_{j=1}^\infty\overline{E_j^k}.
\]
Then \(B\) is Borel, \(A\subseteq B\), and
Proposition~\ref{prop:Phi-basic}(i) gives
\[
        Q(A)\le Q(B)
        \le\sum_j\Phi^s(E_j^k,R_j^k)
        \le Q(A)+\frac1k.
\]
Let \(k\to\infty\).

For (iii), \((E_j,R_j)_j\mapsto(TE_j,bR_j)_j\) bijects the
covers admissible for \(\MM^s_\infty(A)\) and \(\MM^s_\infty(TA)\).
Taking infima over the former, Proposition~\ref{prop:Phi-basic}(v) gives
\[
        \MM^s_\infty(TA)
        =\inf\sum_j\Phi^s(TE_j,bR_j)
        =b^s\inf\sum_j\Phi^s(E_j,R_j)
        =b^s\MM^s_\infty(A).
\]
\end{proof}

\subsection{Equality on subsets}

\begin{lemma}\label{content:transfer}
Let $\nu$ be a Borel regular metric outer measure and let $Q$ be an
outer measure on $\R^d$ admitting Borel supersets of equal value,
with $Q\le\nu$. If $E$ is
Borel and $Q(E)=\nu(E)<\infty$, then $Q(A)=\nu(A)$ for every
$A\subset E$.
\end{lemma}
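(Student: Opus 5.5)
The plan is to prove the two inequalities $Q(A)\le\nu(A)$ and $\nu(A)\le Q(A)$ separately. The first is the hypothesis $Q\le\nu$, so all the work is in the reverse bound. For that I will use two ingredients: a Borel superset of $A$ on which $Q$ does not increase, and the additivity of $\nu$ on Borel sets, which holds because $\nu$ is a metric outer measure and hence every Borel set is $\nu$-measurable.

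Fix $A\subset E$. By hypothesis on $Q$ there is a Borel set $B_0\supseteq A$ with $Q(B_0)=Q(A)$. Set $B:=B_0\cap E$. Then $B$ is Borel and $A\subseteq B\subseteq B_0$, so monotonicity gives
\[
 Q(A)\le Q(B)\le Q(B_0)=Q(A),
\]
and hence $Q(B)=Q(A)$. The same argument applied to the complement $E\setminus B$ needs no regularity at all, since that set is already Borel.

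Next, $Q$ is an outer measure, so it is subadditive:
\[
 \nu(E)=Q(E)\le Q(B)+Q(E\setminus B)\le Q(B)+\nu(E\setminus B).
\]
Because $B$ is Borel, it is $\nu$-measurable, and therefore
\[
 \nu(E)=\nu(B)+\nu(E\setminus B).
\]
All these quantities are finite since $\nu(E)<\infty$. Subtracting $\nu(E\setminus B)$ from both sides of the inequality gives $\nu(B)\le Q(B)$.

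Combining this with $Q\le\nu$ and monotonicity of $\nu$, we get
\[
 \nu(A)\le\nu(B)\le Q(B)=Q(A)\le\nu(A),
\]
so $Q(A)=\nu(A)$. The only delicate point is that the subtraction step requires finiteness, which is exactly where $\nu(E)<\infty$ enters. Borel regularity of $\nu$ is not actually needed beyond the Carathéodory measurability of Borel sets, which the metric outer measure property already supplies.
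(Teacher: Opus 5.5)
Your proof is correct and follows essentially the same route as the paper: subadditivity of $Q$ together with additivity of $\nu$ on the Borel sets $B$ and $E\setminus B$ gives $\nu(B)\le Q(B)$, and a Borel hull $B$ with $A\subset B\subset E$ and $Q(B)=Q(A)$ then closes the chain $\nu(A)\le\nu(B)\le Q(B)=Q(A)\le\nu(A)$. You spell out the intersection $B=B_0\cap E$, which the paper leaves implicit, and your remark that Borel regularity of $\nu$ is never used applies equally to the paper's argument.
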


\begin{proof}
For Borel $B\subset E$,
\[
 \nu(E)=Q(E)\le Q(B)+Q(E\setminus B)
 \le Q(B)+\nu(E\setminus B).
\]
Since $\nu(E)<\infty$, subtraction gives $\nu(B)\le Q(B)$.
For arbitrary $A\subset E$, choose a Borel set $B$ with
$A\subset B\subset E$ and $Q(B)=Q(A)$. Then
\[
 \nu(A)\le\nu(B)=Q(B)=Q(A)\le\nu(A).
\]
\end{proof}

For Hausdorff content, Farkas--Fraser~\cite[Lemma~1.1]{FF} state
inheritance for Hausdorff-measurable subsets. Lemma~\ref{content:transfer}
also gives the arbitrary-subset version: Borel hulls for
$\mathcal H^s_\infty$ are obtained by closing the carriers of
near-optimal covers and intersecting the resulting countable unions
of closed sets.

\subsection{Measurability on compact hyperspaces}
\label{compact hyperspaces}

On compact hyperspaces, Hausdorff content is upper semicontinuous,
hence of Baire class~1, whereas Hausdorff measure is of Baire class~2
and need not be of Baire class~1~\cite[Theorem~2.1]{MM}.
This motivates comparing the measurability of labelled packing content
and labelled packing measure. We show that, on \(\K(\R^d)\),
labelled packing content has no better measurability properties than
labelled packing measure with respect to the Borel \(\sigma\)-algebra,
\(\sigma(\Sigma^1_1)\), and the universal completion: their 
maps are measurable simultaneously with respect to each of these
three \(\sigma\)-algebras.

Fix \(s>0\) and a Polish metric space \(X\). Equip \(\K(X)\)
with the Hausdorff metric. Adjoin
\(\varnothing\) as an isolated point to obtain \(\Kz(X)\), and
assign both maps the value zero there. Let \(\Prob(X)\)
denote the space of Borel probabilities with the weak topology.
All maps below take values in \([0,\infty]\) with its
order topology.

For \(K\in\K(X)\) and \(0<\delta\le\infty\),
\begin{equation}
\label{eq:compact-cover}
        \MM^s_\delta(K)
        =\inf\left\{\sum_j\Phi^s(L_j,R_j):
        \begin{array}{l}
          L_j\in\Kz(X),\quad L_j\subseteq K,\\
          |L_j|\le R_j\le\delta,\quad 0<R_j<\infty,\\
          K=\displaystyle\bigcup_jL_j
        \end{array}\right\}.
\end{equation}
Indeed, replacing \((E_j,R_j)\) by
\((\overline{E_j\cap K},R_j)\) preserves admissibility and does not
increase the cost, by Proposition~\ref{prop:Phi-basic}(i).
The case \(\delta=\infty\) gives \(\MM^s_\infty(K)\).

\begin{lemma}
\label{content:hyperspace-test}
For fixed \(0<\delta\le\infty\), let \(\mathscr E_\delta\)
consist of the tuples
\((K,a,\mu,t)\in\K(X)\times(0,\infty)\times\Prob(X)\times\R\)
such that \(\mu(K)=1\) and
\begin{equation}
\label{content:global-test}
        \sum_{i=1}^m\Phi^s(L_i,R_i)
        +a\left(1-\mu\Bigl(\bigcup_{i=1}^mL_i\Bigr)\right)
        \ge t
\end{equation}
for every finite family satisfying
\[
        L_i\in\Kz(X),\quad L_i\subseteq K,\quad
        |L_i|\le R_i\le\delta,\quad 0<R_i<\infty.
\]
Then \(\mathscr E_\delta\) is Borel.
\end{lemma}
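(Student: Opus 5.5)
The plan is to write the complement of \(\mathscr E_\delta\) inside the closed set
\[
\mathscr D:=\{(K,a,\mu,t):\mu(K)=1\}
\]
as a countable union of Borel sets. The set \(\mathscr D\) is closed because \(K\mapsto\mu(K)\) is jointly upper semicontinuous on \(\K(X)\times\Prob(X)\); by the portmanteau theorem, \(\{(K,\mu):\mu(K)\ge1\}\) is closed. A tuple of \(\mathscr D\) fails to lie in \(\mathscr E_\delta\) exactly when some finite admissible family makes the left side of \eqref{content:global-test} strictly smaller than \(t\).

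The difficulty is that the families range over an uncountable set. I would reduce to a countable one using the semicontinuity already available. Fix a countable dense set \(D\subset X\). Let \(\mathcal F\) be the countable collection of finite subsets of \(D\), and let the labels range over positive rationals together with \(\delta\) (when \(\delta<\infty\)). Given an admissible family \((L_i,R_i)\) violating \eqref{content:global-test}, the steps are as follows.

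\begin{itemize}
\item[(1)] Enlarge each \(R_i\) slightly to a rational \(R_i'\le\delta\), or to \(\delta\) itself. By Lemma~\ref{lem:capacity-packing}, or directly from the definition, \(\Phi^s(L_i,R)\) is nondecreasing in \(R\). It is not obviously right-continuous, so instead I would first shrink: use \(\Phi^s(L_i,R_i)\) together with the joint lower semicontinuity of Proposition~\ref{prop:Phi-basic}(iii) only in the direction it helps.
\item[(2)] Replace each \(L_i\) by a nearby compact set. Honestly, upper estimates of \(\Phi^s\) under perturbation are not available, so replacing \(L_i\) by nearby finite sets of \(D\) is dangerous for the cost term.
\end{itemize}

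Because of this, I would instead argue via the projection/section structure. Define
\[
\mathscr B:=\Bigl\{(K,a,\mu,t,(L_i,R_i)_{i\le m})\ :\ \text{admissible},\ \sum_i\Phi^s(L_i,R_i)+a\bigl(1-\mu(\textstyle\bigcup L_i)\bigr)<t\Bigr\}.
\]
This set is Borel for each \(m\). The admissibility conditions \(L_i\subseteq K\) and \(|L_i|\le R_i\le\delta\) are closed. The map \(\Phi^s\) is lower semicontinuous, hence Borel. The map \(\mu\mapsto\mu(\bigcup L_i)\) is upper semicontinuous jointly in \((L_i),\mu\), since \((L_1,\dots,L_m)\mapsto\bigcup L_i\) is continuous into \(\Kz(X)\). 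The complement of \(\mathscr E_\delta\) in \(\mathscr D\) is the projection of \(\bigcup_m\mathscr B_m\). A projection is a priori only analytic, so the key is to show that the strict inequality is an open condition in \((L_i,R_i)\) after a harmless modification.

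Here is the modification. The term \(-a\mu(\bigcup L_i)\) is lower semicontinuous (because \(\mu(\cdot)\) is upper semicontinuous), and so is \(\Phi^s\). Hence the left side is lower semicontinuous, and the set where it is \(<t\) is not open but an \(F_\sigma\)-type set. So I would replace the left side by an upper semicontinuous minorant that achieves the same infimum. I would use monotonicity: shrinking \(L_i\) to a finite subset does not increase \(\Phi^s\) (Proposition~\ref{prop:Phi-basic}(i)), but it decreases \(\mu(\bigcup L_i)\), so this alone does not help.

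The main obstacle is therefore this step: showing that the infimum over families of the left side equals an infimum over a countable family, or equivalently that the infimum is a Borel function of \((K,\mu)\). The first thing I would try is a compactness argument. For fixed \(m\) and labels confined to a compact range \([\eta,\delta']\), the admissible \((L_i)\) range over the compact hyperspace \(\K(K)^m\). A lower semicontinuous function attains its infimum there, and the map
\[
(K,\mu,a)\mapsto\min\{\cdots\}
\]
is then lower semicontinuous by a standard parametric-minimum argument, because the constraint set \(\{L_i\subseteq K\}\) varies upper semicontinuously with \(K\). Taking the infimum over \(m\) and over countably many compact label ranges, \(\inf_m\inf_{\eta,\delta'}\), gives a Borel function \(g(K,a,\mu)\). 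Then \(\mathscr E_\delta=\mathscr D\cap\{g\ge t\}\) is Borel. A lower semicontinuous function over a varying compact set yields a lower semicontinuous minimum when the set-valued map is upper semicontinuous, which holds here. Labels near \(0\) are handled because they are included in the countable union over \(\eta\to0\).
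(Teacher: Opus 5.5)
Your final argument is correct and is essentially the paper's proof. The paper also confines the labels to compact ranges $1/\ell\le R_i\le\min\{\ell,\delta\}$, and it uses compactness of $\Kz(K\cup\bigcup_nK_n)$ to show that the projection of each closed set $\{\text{left side}\le t-1/j\}$ is closed; this is exactly the statement that your parametric minimum is lower semicontinuous, and it is the concrete justification for your upper-semicontinuity claim about the constraint sets. The earlier detours (the countable dense set, the strict-inequality sets $\mathscr B_m$, the upper semicontinuous minorant) are unnecessary and can be dropped.
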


\begin{proof}
The map \((\mu,L)\mapsto\mu(L)\) is upper semicontinuous, so
\(\mu(K)=1\) is closed and the left-hand side
of~\eqref{content:global-test} is lower semicontinuous by
Proposition~\ref{prop:Phi-basic}(iii). The requirement
that~\eqref{content:global-test} hold for every admissible family
fails precisely when there exist integers \(m,\ell,j\ge1\) and an
admissible family \((L_i,R_i)_{i=1}^m\) such that
\[
        \begin{gathered}
        \frac1\ell\le R_i\le\min\{\ell,\delta\}
        \qquad(1\le i\le m),\\
        \sum_{i=1}^m\Phi^s(L_i,R_i)
        +a\left(1-\mu\Bigl(\bigcup_{i=1}^mL_i\Bigr)\right)
        \le t-\frac1j.
        \end{gathered}
\]
For fixed \(m,\ell,j\), these conditions define a closed set of tuples.
Its projection onto \((K,a,\mu,t)\) is also closed: if base tuples
converge with \(K_n\to K\), their carriers lie in the compact set
\(K\cup\bigcup_nK_n\), so the witnesses have a convergent
subsequence. The complement of the universal condition is therefore
\(F_\sigma\), proving the claim.
\end{proof}

We first use this condition to establish measurability of labelled
packing measure on a general Polish space.

\begin{theorem}
\label{thm:main}
For every \(c\in\R\), the set
\[
        \{K\in\K(X):\MM^s(K)\ge c\}
\]
is analytic. Consequently, \(K\mapsto\MM^s(K)\) is
\(\sigma(\Sigma^1_1(\K(X)))\)-measurable and universally measurable.
\end{theorem}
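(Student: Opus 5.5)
We express \(\{K:\MM^s(K)\ge c\}\) as a countable intersection of projections of the Borel sets \(\mathscr E_\delta\) from Lemma~\ref{content:hyperspace-test}. Since \(\MM^s(K)=\sup_{0<\delta<1}\MM^s_\delta(K)\) is a monotone limit, for \(c\in\R\)
\[
        \{K:\MM^s(K)\ge c\}
        =\bigcap_{n\ge1}\bigcap_{k\ge1}\{K:\MM^s_{1/k}(K)>c-1/n\}.
\]
Countable intersections of analytic sets are analytic. The problem therefore reduces to showing that, for fixed \(0<\delta\le\infty\) and \(t\in\R\), the set \(\{K:\MM^s_\delta(K)>t\}\) (or a suitable variant with \(\ge\)) is analytic. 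For \(c\le0\) the set is all of \(\K(X)\), so we may assume \(c>0\).

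The key step is a duality characterization via \(\mathscr E_\delta\). We claim that, for \(K\in\K(X)\) and real \(t\),
\[
        \MM^s_\delta(K)\ge t
        \iff
        \text{for every } a>0 \text{ there is } \mu\in\Prob(X)
        \text{ with } (K,a,\mu,t')\in\mathscr E_\delta \text{ for all } t'<t,
\]
or a cleaner variant: \(\MM^s_\delta(K)>t\) iff there exist rational \(a>t\), rational \(t'>t\), and \(\mu\) with \((K,a,\mu,t')\in\mathscr E_\delta\). The easy direction is this. If \((K,a,\mu,t')\in\mathscr E_\delta\) with \(t'>t\) and \(a\), then take any countable cover \((L_j,R_j)\) of \(K\) by compact carriers as in \eqref{eq:compact-cover}. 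Since \(\mu(K)=1\), continuity from below gives \(\mu(\bigcup_{i\le m}L_i)\to1\), so letting \(m\to\infty\) in \eqref{content:global-test} yields \(\sum_j\Phi^s(L_j,R_j)\ge t'>t\). Hence \(\MM^s_\delta(K)\ge t'>t\). This direction needs no restriction on \(a\).

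The converse is the main obstacle. Given \(\MM^s_\delta(K)>t'>t\), we must produce a probability \(\mu\) on \(K\) and a penalty \(a\) such that every finite admissible family satisfies \eqref{content:global-test}. This is a Frostman/minimax-type statement. The finite families of compact carriers with bounded-below labels generate a lower semicontinuous cost on a compact-like space. The functional \(\mu\mapsto\inf_{\text{families}}[\sum\Phi^s+a(1-\mu(\bigcup L_i))]\) is concave and upper semicontinuous on the compact convex set \(\Prob(K)\), while the inner expression is affine in \(\mu\). I would apply a minimax theorem (Sion's or Ky Fan's) to exchange \(\sup_\mu\) and \(\inf\) over families. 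Mixed strategies over families are needed, so the inner problem becomes a fractional covering problem. The resulting value is a fractional covering number, which must be compared with \(\MM^s_\delta(K)\). Taking \(a\) large, the compactness of \(K\) should make finite families with leftover measure \(\le t'/a\) behave like genuine covers.

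If this exact duality is too delicate, a fallback avoids duality and uses the trivial direction only. In that case I would aim at \(\{\MM^s_\delta(K)\ge c\}\) directly via an analytic description of the complement of the coverings. I would also check whether the paper's intended route is simply \(\{K:\MM^s(K)\ge c\}=\bigcap_k\proj\{(K,a,\mu,t):\ldots\}\), with projection along the Polish coordinates \((a,\mu)\) and \(t=c-1/n\). Once the characterization holds, each level set is a projection of a Borel set in \(\K(X)\times(0,\infty)\times\Prob(X)\), hence analytic. Then \(\sigma(\Sigma^1_1)\)-measurability follows because the sets \(\{\MM^s\ge c\}\) for rational \(c\) generate the Borel \(\sigma\)-algebra of \([0,\infty]\) under preimage, and universal measurability follows from Lusin's theorem that analytic sets are universally measurable.
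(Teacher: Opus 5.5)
There is a genuine gap. Your reduction is sound once the identity is corrected to $\{K:\MM^s(K)\ge c\}=\bigcap_{n}\bigcup_{k}\{K:\MM^s_{1/k}(K)>c-1/n\}$. With $\bigcap_k$ you only get $\{\MM^s_1(K)\ge c\}$, since $\MM^s_{1/k}$ increases with $k$. Your easy direction is correct. The converse, however, is not supplied: producing $(a,\mu)$ with $(K,a,\mu,t')\in\mathscr E_\delta$ whenever $\MM^s_\delta(K)>t'$ is the substance of the theorem.

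The minimax sketch fails as stated. For compact $U$ the map $\mu\mapsto\mu(U)$ is upper semicontinuous, so the penalty $a(1-\mu(U))$ is \emph{lower} semicontinuous in $\mu$. Sion's theorem with $\Prob(K)$ as the compact maximizing variable therefore has no hypothesis to rest on, and your claimed upper semicontinuity of the infimum is not justified. Even after an exchange you would face a fractional covering value, and for a merely subadditive cost there is no general reason it equals $\MM^s_\delta(K)$. The hope that large $a$ makes families with small leftover $\mu$-mass behave like covers fails unless $\mu$ is chosen to dominate $\MM^s_\delta$ on the leftover sets.

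The missing idea is that the certificate can be $\MM^s$ itself. It is a metric outer measure with $\MM^s_\delta\le\MM^s$. Take a compact $N\subseteq K$ with $0<\MM^s(N)<\infty$, and set $a=\MM^s(N)$ and $\mu=\MM^s|_N/a$. For any finite admissible family with union $U$, subadditivity gives $\sum_i\Phi^s(L_i,R_i)+a(1-\mu(U))\ge\MM^s_\delta(N\cap U)+\MM^s(N\setminus U)\ge\MM^s_\delta(N)$.

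If $\MM^s(K)<\infty$, take $N=K$. If $\MM^s(K)=\infty$, you need an input you never mention: the Joyce--Preiss compact-subset theorem. It gives a compact $N\subseteq K$ with $\mathcal P^s(N)$ finite but as large as desired. Theorem~\ref{thm:comparison}, together with $\MM^s_\delta\ge2^{-s}\mathcal P^s$, then gives $\MM^s(N)<\infty$ and $\MM^s_\delta(N)$ large. With these inputs your fixed-$\delta$ scheme goes through.

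The paper argues slightly differently. It fixes $a=c$ and uses a single $\mu$ for every $n$, via the uniform defect bound \eqref{eq:defect} on Borel subsets of $N$. It then projects the single Borel set $\mathscr A_c$.
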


\begin{proof}
It suffices to consider \(c>0\). If \(\MM^s(K)\ge c\), there is
a compact \(N\subseteq K\) with
\begin{equation}
\label{eq:finite-core}
        c\le\MM^s(N)<\infty.
\end{equation}
For finite \(\MM^s(K)\), take \(N=K\). Otherwise, the compact-subset
theorem of Joyce and Preiss~\cite{JP} gives \(N\subseteq K\) with
\(2^s c\le\mathcal P^s(N)<\infty\); Theorem~\ref{thm:comparison}
then gives~\eqref{eq:finite-core}.

For such \(N\) and every Borel \(A\subseteq N\), subadditivity gives
\[
        \MM^s_\delta(N)
        \le\MM^s_\delta(A)+\MM^s(N\setminus A),
\]
and hence
\begin{equation}
\label{eq:defect}
        0\le\MM^s(A)-\MM^s_\delta(A)
        \le\MM^s(N)-\MM^s_\delta(N)\longrightarrow0
        \qquad(\delta\downarrow0).
\end{equation}
Define the Borel set
\[
        \mathscr A_c:=
        \bigcap_{n\ge1}\ \bigcup_{m\ge\max\{n,2\}}
        \{(K,\mu):(K,c,\mu,c-2^{-n})\in\mathscr E_{1/m}\}.
\]
We claim that
\begin{equation}
\label{eq:projection}
        \{K\in\K(X):\MM^s(K)\ge c\}
        =\proj_{\K(X)}\mathscr A_c.
\end{equation}
For the forward inclusion, take \(N\) as above and
\(\mu=\MM^s|_N/\MM^s(N)\). Given \(n\), choose
\(m\ge\max\{n,2\}\) with
\(\MM^s(N)-\MM^s_{1/m}(N)<2^{-n}\). For any finite family in
the definition of \(\mathscr E_{1/m}\), put \(U=\bigcup_iL_i\).
By~\eqref{eq:defect},
\[
        c\mu(U)\le\MM^s(N\cap U)
        \le\MM^s_{1/m}(N\cap U)+2^{-n}
        \le\sum_i\Phi^s(L_i,R_i)+2^{-n}.
\]
Thus \((K,c,\mu,c-2^{-n})\in\mathscr E_{1/m}\).
Conversely, let \((K,\mu)\in\mathscr A_c\). For each \(n\ge1\),
choose \(m\ge\max\{n,2\}\) such that
\((K,c,\mu,c-2^{-n})\in\mathscr E_{1/m}\).
Apply inequality~\eqref{content:global-test}, with \(a=c\) and
\(t=c-2^{-n}\), to the first \(q\) members of any cover
in~\eqref{eq:compact-cover} with \(\delta=1/m\).
Since \(\mu(\bigcup_{i=1}^qL_i)\uparrow1\),
\[
        c-2^{-n}\le\sum_i\Phi^s(L_i,R_i).
\]
Taking infima gives \(\MM^s(K)\ge\MM^s_{1/m}(K)\ge c-2^{-n}\).
Let \(n\to\infty\). This proves~\eqref{eq:projection}, and the
measurability assertions follow since analytic sets are universally
measurable.
\end{proof}

For \(X=\R^d\), the same Borel condition allows us to recover
labelled packing content from labelled packing measure. A countable
reconstruction gives the converse.

\begin{proposition}
\label{content:global-measurability}
Let \(d\ge1\), \(X=\R^d\), and define
\[
        f(K):=\MM^s(K),\qquad g(K):=\MM^s_\infty(K)
        \qquad(K\in\K(X)).
\]
\begin{enumerate}
\item[\textup{(i)}]
\(f\) is Borel measurable if and only if \(g\) is Borel measurable.

\item[\textup{(ii)}]
The same equivalence holds for measurability with respect to
\(\sigma(\Sigma^1_1(\K(X)))\) and the universal completion
\(\mathcal U(\K(X))\). Both maps are measurable with respect to
these two \(\sigma\)-algebras.
\end{enumerate}
\end{proposition}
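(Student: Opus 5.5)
The plan is to prove (ii) first for both maps, using Theorem~\ref{thm:main} and Lemma~\ref{content:hyperspace-test}, and then to obtain (i) from two explicit countable reconstructions: one expressing \(g\) through \(f\), and one expressing \(f\) through \(g\).

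\emph{Step 1: \(g\) is measurable for \(\sigma(\Sigma^1_1)\).} For \(f\) this is Theorem~\ref{thm:main}. For \(g\), I would repeat the projection argument of that theorem with \(\delta=\infty\). Consider the Borel set
\[
\mathscr B_c:=\bigcap_{n\ge1}\{(K,\mu):(K,c,\mu,c-2^{-n})\in\mathscr E_\infty\}.
\]
I would show that \(\{g\ge c\}=\proj_{\K(X)}\mathscr B_c\) for \(c>0\). The inclusion ``\(\supseteq\)'' is exactly the converse half of the proof of Theorem~\ref{thm:main}. For ``\(\subseteq\)'' a probability \(\mu\) on \(K\) is needed with \(c\,\mu(U)\le\sum_i\Phi^s(L_i,R_i)\) for every finite admissible family. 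I would take \(\mu=\MM^s|_N/\MM^s(N)\) for a compact \(N\subseteq K\) of finite measure, and use Lemma~\ref{content:transfer} in the form of an inequality \(\MM^s_\infty(A)\ge c\,\mu(A)\) on Borel subsets of \(N\). If this Frostman-type step fails, I fall back on Step~2: it shows that \(g\) is a countable infimum of functions built from \(f\) and lower semicontinuous maps, and that is enough for both \(\sigma\)-algebras in (ii).

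\emph{Step 2: \(g\) from \(f\).} Let \(\mathcal D\) be the countable family of finite unions of closed rational cubes, with rational labels. For finite families \((D_i,R_i)\) with \(|D_i|\le R_i\), put \(U=\bigcup_i\operatorname{int}D_i\) and
\[
h_{(D_i,R_i)}(K):=\sum_i\Phi^s(K\cap D_i,R_i)+f\bigl(K\setminus U\bigr).
\]
Here \(K\setminus U\) is compact. The maps \(K\mapsto K\cap D_i\) and \(K\mapsto K\setminus U\) are Borel (upper semicontinuous) on \(\Kz\), and \(\Phi^s\) is lower semicontinuous by Proposition~\ref{prop:Phi-basic}(iii). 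Hence each \(h\) is Borel whenever \(f\) is. I would prove
\[
g(K)=\inf h_{(D_i,R_i)}(K).
\]
The inequality ``\(\le\)'' follows from subadditivity of \(\MM^s_\infty\) (Proposition~\ref{content:outer-properties}), the bound \(\MM^s_\infty\le\MM^s\), and Proposition~\ref{prop:Phi-basic}(i).

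For ``\(\ge\)'', take a near-optimal compact cover as in~\eqref{eq:compact-cover}. Fatten each \(L_j\) slightly to a rational cube union, with a slightly larger label, at the price of \(\varepsilon\) in cost. This uses that \(\Phi^s(L,R)\) varies continuously enough under outer approximation. Then truncate to finitely many pieces. The remainder \(K\setminus U\) is covered by the tail of the cover, so its \(f\)-value must be shown to be small. This is where finiteness matters: I split into the cases \(f(K)<\infty\) and \(g(K)=\infty\), and use Theorem~\ref{thm:comparison} together with the compact-subset theorem when \(f(K)=\infty\) but \(g(K)<\infty\).

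\emph{Step 3: \(f\) from \(g\).} Here I would use the Method~II localisation. I expect
\[
f(K)=\lim_{m\to\infty}\ \inf\Bigl\{\sum_i \Phi^s(K\cap D_i,R_i)+ g(K\setminus U)\Bigr\},
\]
with all \(R_i\le1/m\). This is Borel in \(K\) when \(g\) is. Its justification combines the defect estimate~\eqref{eq:defect}, which makes \(\MM^s\) and \(\MM^s_{1/m}\) uniformly close on subsets of a finite-measure compact core, with the bound \(g\le f\) applied to a remainder of small measure.

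\emph{Main obstacle.} The hard part is the reverse inequalities in Steps~2 and~3, namely controlling the uncovered compact remainder by \(f\) (respectively \(g\)) without circularity. The infinite-measure case is especially delicate; there I would first try the Joyce--Preiss core together with Theorem~\ref{thm:comparison}.
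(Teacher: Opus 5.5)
Your proposal has genuine gaps. Step~1 rests on an inequality that is not available. The two reconstruction formulas of Steps~2 and~3, on which both your fallback and part~(i) depend, are false in general.

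\emph{Step 1.} Lemma~\ref{content:transfer} presupposes $\MM^s_\infty(N)=\MM^s(N)$, which holds only for special sets such as those of Section~\ref{content:section}. With $\mu=\MM^s|_N/\MM^s(N)$, the required bound $c\,\mu(A)\le\MM^s_\infty(A)$ fails as soon as some $A\subseteq N$ has $\MM^s_\infty(A)/\MM^s(A)<c/\MM^s(N)$.

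\emph{Step 3.} The inner infimum is always at most $g(K)$: take a single cube disjoint from $K$. So your formula would force $f\le g$, i.e.\ $\MM^s=\MM^s_\infty$ on all of $\K(\R^d)$. This is false. For $E=[0,1]\times\{0,t\}\subset\R^2$ and $s=1$, labels below $t$ confine each carrier to one segment, where $\Phi^1(E_j,R_j)\ge 2R_j\ge 2|E_j|$; hence $\MM^1(E)\ge 4$. On the other hand, one checks that $\Phi^1(E,|E|)\le 2+O(t)$, so $\MM^1_\infty(E)<\MM^1(E)$ for small $t$. The content of a remainder does not localise, and \eqref{eq:defect} concerns $\MM^s_\delta$, not $\MM^s_\infty$.

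\emph{Step 2.} Fattening the carriers needs upper semicontinuity of $\Phi^s$ under outer approximation. But $\Phi^s$ is only lower semicontinuous (Proposition~\ref{prop:Phi-basic}(iii)), and $\Phi^s(K\cap D,R)$ can be $+\infty$ for every neighbourhood $D$ of a carrier. Concretely, adjoin to the $E$ above a countable set $Q$ whose accumulation points lie in $E$ and with $\mathcal P^1_0(Q\cap V)=\infty$ for every open $V$ meeting $E$. For instance, take finite clusters of shrinking diameter converging to $E$, whose $1$-packing sums at shrinking radii tend to infinity. Then $K=E\cup Q$ is compact, $f(K)=f(E)$ and $g(K)=g(E)$. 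Yet each $h_{(D_i,R_i)}(K)$ is $+\infty$ if some $\operatorname{int}D_i$ meets $E$, and is at least $f(E)$ otherwise. So $\inf h=f(K)>g(K)$. Separately, the case $f(K)=\infty>g(K)$ that you single out cannot occur, by \eqref{content:chain}.

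The missing ideas are the paper's two reconstructions, and both avoid approximating carriers.

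\emph{Recovering $g$ from $f$.} The set $\mathscr E_\infty$ already quantifies over all compact carriers and is Borel by Lemma~\ref{content:hyperspace-test}. Use it with $a=f(K)$ rather than $a=c$, and with $\mu_K=\MM^s|_K/f(K)$. This measure depends measurably on $K$ for each of the three $\sigma$-algebras, because $\mu_K(B)=f(K\cap B)/f(K)$ for rational boxes $B$, and $K\mapsto K\cap B$ is Borel and pulls back analytic and universally measurable sets. Since $f(K)(1-\mu_K(U))=\MM^s(K\setminus U)\ge\MM^s_\infty(K\setminus U)$, subadditivity gives exactly $g(K)\ge c\iff(K,f(K),\mu_K,c)\in\mathscr E_\infty$ whenever $0<f(K)<\infty$. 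No Frostman-type lemma is needed.

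\emph{Recovering $f$ from $g$.} Use a supremum rather than an infimum. First, $\MM^s_\infty=\MM^s_\delta$ on sets of diameter at most $\delta$. Then $f(K)=\sup_{\mathcal D}\sum_{B\in\mathcal D}g(K\cap B)$ over finite families $\mathcal D$ of disjoint rational boxes. The inequality $\ge$ comes from $g\le f$ and additivity of $\MM^s$. The inequality $\le$ comes from a grid of mesh below $\delta/\sqrt d$ whose boundary hyperplanes are $\MM^s|_K$-null.

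Part~(ii) for $g$ then follows from Theorem~\ref{thm:main} together with the first reconstruction.
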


\begin{proof}
By~\eqref{content:chain}, there is \(C_s\ge1\) such that
\[
        C_s^{-1}\MM^s\le\MM^s_\infty\le\MM^s_\delta\le\MM^s
        \qquad(\delta>0).
\]
Extend \(f\) and \(g\) by zero to \(\Kz(\R^d)\). Let
\(\mathscr S\) be the Borel \(\sigma\)-algebra on \(\Kz(\R^d)\),
\(\sigma(\Sigma^1_1(\Kz(\R^d)))\), or
\(\mathcal U(\Kz(\R^d))\). Since \(\varnothing\) is isolated,
these extensions preserve the respective measurability properties.

Let \(\mathscr R\) be the countable family of bounded closed boxes
with rational endpoints, including degenerate boxes.
For \(B\in\mathscr R\), define
\(I_B:\Kz(\R^d)\to\Kz(\R^d)\) by \(I_B(K)=K\cap B\).
If \(U\subseteq\R^d\) is open, write \(B\cap U=\bigcup_{n\ge1}F_n\)
with each \(F_n\) compact. Then
\[
        \begin{aligned}
        \{K:I_B(K)\cap U\ne\varnothing\}
        &=\bigcup_{n\ge1}\{K:K\cap F_n\ne\varnothing\},\\
        \{K:I_B(K)\subseteq U\}
        &=\{K:K\cap(B\setminus U)=\varnothing\}.
        \end{aligned}
\]
The first set is \(F_\sigma\) and the second is open; hence \(I_B\)
is Borel measurable. Moreover,
\[
        E\in\mathscr S\quad\Longrightarrow\quad
        I_B^{-1}(E)\in\mathscr S.
\]
For \(\sigma(\Sigma^1_1)\), this follows because Borel inverse images
of analytic sets are analytic. For universal measurability, apply
the definition to the probability \((I_B)_*\lambda\), for each
Borel probability \(\lambda\) on \(\Kz(\R^d)\).

Suppose that \(f\) is \(\mathscr S\)-measurable, and let
\(H=\{K:0<f(K)<\infty\}\). For \(K\in H\), define
\[
        \mu_K:=\frac{\MM^s|_K}{f(K)}.
\]
For every \(B\in\mathscr R\),
\[
        \mu_K(B)=\frac{f(I_B(K))}{f(K)}.
\]
The maps \(\mu\mapsto\mu(B)\), \(B\in\mathscr R\), generate the
Borel \(\sigma\)-algebra of \(\Prob(\R^d)\): every open set is a
countable union of such boxes, and measures of finite unions are
determined by inclusion--exclusion. Thus \(K\mapsto\mu_K\) is
\(\mathscr S\)-measurable on \(H\).

For \(K\in H\) and \(c>0\),
\[
        g(K)\ge c\quad\Longleftrightarrow\quad
        (K,f(K),\mu_K,c)\in\mathscr E_\infty.
\]
If \(g(K)\ge c\), let \(L_i\in\Kz(\R^d)\), \(L_i\subseteq K\),
and \(|L_i|\le R_i<\infty\), \(R_i>0\), for \(1\le i\le m\).
With \(U=\bigcup_{i=1}^mL_i\), subadditivity gives
\[
        \begin{aligned}
        c&\le\MM^s_\infty(K)\\
        &\le\MM^s_\infty(U)+\MM^s_\infty(K\setminus U)\\
        &\le\sum_{i=1}^m\Phi^s(L_i,R_i)
        +f(K)(1-\mu_K(U)).
        \end{aligned}
\]
This proves \((K,f(K),\mu_K,c)\in\mathscr E_\infty\).
Conversely, suppose that \((K,f(K),\mu_K,c)\in\mathscr E_\infty\),
and let \((L_i,R_i)_{i\ge1}\) be any cover
in~\eqref{eq:compact-cover} with \(\delta=\infty\).
Inequality~\eqref{content:global-test}, with \(a=f(K)\) and \(t=c\),
gives
\[
        c\le\sum_{i=1}^q\Phi^s(L_i,R_i)
        +f(K)\left(1-\mu_K\Bigl(\bigcup_{i=1}^qL_i\Bigr)\right)
        \qquad(q\ge1).
\]
Since \(\mu_K(\bigcup_{i=1}^qL_i)\uparrow\mu_K(K)=1\), letting
\(q\to\infty\) and taking the infimum over these covers gives
\(c\le\MM^s_\infty(K)\).
By Lemma~\ref{content:hyperspace-test} and~\eqref{content:chain},
\[
        \begin{split}
        \{K:g(K)\ge c\}
        =\{K:f(K)=\infty\}
        \cup\{K\in H:(K,f(K),\mu_K,c)\in\mathscr E_\infty\}
        \in\mathscr S.
        \end{split}
\]
Thus \(g\) is \(\mathscr S\)-measurable.

To prove the reverse implication, first observe that, for
\(A\subseteq\R^d\) with \(|A|\le\delta\), the replacement
\[
        (E_j,R_j)\longmapsto
        (E_j\cap A,\min\{R_j,\delta\})
\]
transforms every cover admissible for \(\MM^s_\infty(A)\) into one
admissible for \(\MM^s_\delta(A)\) without increasing its cost.
Consequently,
\[
        \MM^s_\infty(A)=\MM^s_\delta(A)\qquad(|A|\le\delta).
\]
Let \(\mathfrak D\) be the countable set of finite families of
pairwise disjoint boxes in \(\mathscr R\). We shall prove
\begin{equation}
\label{content:box-recovery}
        \MM^s(K)=\sup_{\mathcal D\in\mathfrak D}
        \sum_{B\in\mathcal D}\MM^s_\infty(K\cap B).
\end{equation}
For every \(\mathcal D\in\mathfrak D\), finite additivity of
\(\MM^s\) on disjoint Borel sets gives
\[
        \sum_{B\in\mathcal D}\MM^s_\infty(K\cap B)
        \le\sum_{B\in\mathcal D}\MM^s(K\cap B)\le\MM^s(K).
\]
If \(\MM^s(K)=0\), both sides of~\eqref{content:box-recovery} vanish.
If \(\MM^s(K)=\infty\), choose \(B\in\mathscr R\) with
\(K\subseteq B\). Then \(\MM^s_\infty(K\cap B)=\infty\)
by~\eqref{content:chain}, proving~\eqref{content:box-recovery} in this case.

Suppose that \(0<\MM^s(K)<\infty\), and fix \(\varepsilon>0\).
Choose \(\delta>0\) such that
\(\MM^s_\delta(K)>\MM^s(K)-\varepsilon\). The finite measure
\(\MM^s|_K\) assigns positive measure to at most countably many
hyperplanes in each coordinate direction. Hence a cubical grid of
side length \(h<\delta/\sqrt d\) can be translated so that all its
boundary hyperplanes have measure zero. Let \(C_1,\ldots,C_N\)
be the open cubes of this grid that intersect \(K\). For each \(i\),
choose an increasing sequence of rational closed boxes with union
\(C_i\). Continuity from below yields \(B_i\in\mathscr R\),
\(B_i\subset C_i\), such that
\[
        \MM^s\bigl(K\cap(C_i\setminus B_i)\bigr)
        <\frac{\varepsilon}{N}\qquad(1\le i\le N).
\]
The family \(\{B_1,\ldots,B_N\}\) belongs to \(\mathfrak D\), and
\[
        |B_i|\le\delta,\qquad
        \MM^s\Bigl(K\setminus\bigcup_{i=1}^N B_i\Bigr)<\varepsilon.
\]
Subadditivity and the equality for sets of diameter at most \(\delta\)
therefore give
\begin{align}
\MM^s(K)-\varepsilon
        &<\MM^s_\delta(K)\nonumber\\
        &\le\sum_{i=1}^N\MM^s_\delta(K\cap B_i)
        +\MM^s_\delta\Bigl(K\setminus\bigcup_{i=1}^N B_i\Bigr)\nonumber\\
        &<\sum_{i=1}^N\MM^s_\infty(K\cap B_i)+\varepsilon.
        \nonumber
\end{align}
Letting \(\varepsilon\downarrow0\) proves~\eqref{content:box-recovery}.
If \(g\) is \(\mathscr S\)-measurable, each function
\(K\mapsto\sum_{B\in\mathcal D}g(I_B(K))\) is
\(\mathscr S\)-measurable. Equation~\eqref{content:box-recovery}
expresses \(f\) as the supremum of this countable family, so \(f\)
is \(\mathscr S\)-measurable.

Theorem~\ref{thm:main} gives
\(\sigma(\Sigma^1_1(\K(\R^d)))\)-measurability and universal
measurability of \(f\). The equivalence just proved gives both
properties for \(g\), completing (ii).
\end{proof}

\section{\texorpdfstring{Proof of Theorem~\ref{thm:comparison}}{Proof of the comparison theorem}}
\label{sec:main-result}

For a ball \(B=B(x,r)\) and \(a>0\), write \(aB:=B(x,ar)\).

\begin{lemma}
\label{lem:5r}
Let \(X\) be a metric space, and let \(\mathscr B\) be a family of balls
whose radii are positive and uniformly bounded above. There is a pairwise
disjoint subfamily \(\mathscr G\subseteq\mathscr B\) such that
\[
        \bigcup_{B\in\mathscr B}B
        \subseteq
        \bigcup_{B\in\mathscr G}5B.
\]
If the set of centres of the balls in \(\mathscr B\) is separable, then
\(\mathscr G\) may be chosen countable.
\end{lemma}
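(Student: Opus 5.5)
We prove Lemma~\ref{lem:5r} by the standard Vitali $5r$ argument: a greedy selection organised by dyadic radius layers, with Zorn's lemma handling arbitrary (possibly uncountable) families. Let $M:=\sup\{r_B:B\in\mathscr B\}<\infty$. For $k\ge1$, split $\mathscr B$ into the layers
\[
 \mathscr B_k:=\{B\in\mathscr B:\ M2^{-k}<r_B\le M2^{-k+1}\}.
\]
Every ball lies in exactly one layer, since all radii are positive. We choose subfamilies $\mathscr G_1,\mathscr G_2,\dots$ inductively. Given $\mathscr G_1,\dots,\mathscr G_{k-1}$, let $\mathscr H_k$ be the set of balls in $\mathscr B_k$ that are disjoint from every ball in $\mathscr G_1\cup\dots\cup\mathscr G_{k-1}$. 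Apply Zorn's lemma, ordering pairwise disjoint subfamilies of $\mathscr H_k$ by inclusion, to get a maximal pairwise disjoint subfamily $\mathscr G_k\subseteq\mathscr H_k$. Chains have their unions as upper bounds, because disjointness is a pairwise condition. Finally set $\mathscr G:=\bigcup_k\mathscr G_k$. This family is pairwise disjoint by construction.

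For the covering property, fix $B=B(x,r)\in\mathscr B_k$. We claim $B$ meets some $B'=B(x',r')\in\mathscr G_1\cup\dots\cup\mathscr G_k$. If $B$ misses every ball in the earlier layers, then $B\in\mathscr H_k$, and maximality of $\mathscr G_k$ forces $B$ to meet a member of $\mathscr G_k$. Since $B'$ lies in a layer of index at most $k$, we have $r'>M2^{-k}\ge r/2$, so $r<2r'$. Pick $z\in B\cap B'$ and let $y\in B$. Then
\[
 d(y,x')\le d(y,x)+d(x,z)+d(z,x')\le 2r+r'<5r'.
\]
Hence $B\subseteq 5B'$, which gives the inclusion.

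For countability, suppose the set $D$ of centres is separable, and fix a countable dense set $\{q_n\}\subseteq D$. In each layer $\mathscr G_k$ the radii exceed $\rho:=M2^{-k}$, and disjoint balls of radii exceeding $\rho$ have centres at distance greater than $\rho$. Each centre has some $q_n$ within distance $\rho/2$. Two distinct centres cannot share the same $q_n$, since otherwise they would be within distance $\rho$ of each other. So $\mathscr G_k$ injects into $\{q_n\}$ and is countable, and therefore $\mathscr G$ is countable.

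There is one point to watch. Closed balls in a general metric space need not determine their radius, so we treat $\mathscr B$ as a family of pairs $(x,r)$. The only places care is needed are the separation estimate $d(x,x')>\max\{r,r'\}$ for disjoint balls, which holds because $x\in B(x,r)$, and the strict inequality $r<2r'$ coming from the layer bounds. Beyond this the argument is routine. Zorn's lemma is needed only for uncountable families; in the separable case one could instead choose greedily along an enumeration.
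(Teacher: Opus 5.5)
Your proof is correct. The paper states Lemma~\ref{lem:5r} without proof as the classical Vitali $5r$ covering lemma, and your dyadic-layer selection with maximal disjoint subfamilies is the standard argument behind it; this includes the separation estimate $d(x,x')>\max\{r,r'\}$ used for countability. The only inaccuracy is in your closing aside: separability of the centres does not make $\mathscr B$ countable, so a Zorn-free selection would have to run along an enumeration of the countable dense set $\{q_n\}$ of centres rather than of $\mathscr B$, and this does not affect the proof as written.
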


Finiteness of packing premeasure supplies the total boundedness needed
for a countable selection.

\begin{lemma}
\label{lem:finite-scale-finiteness}
Let \(s>0\), let \(X\) be a metric space, and let \(E\subset X\).

\begin{enumerate}
\item[\textup{(i)}]
If \(E\) is unbounded, then
\[
        \mathcal P^s_0(E)=\infty.
\]

\item[\textup{(ii)}]
If \(\mathcal P^s_0(E)<\infty\), then \(E\) is totally bounded and
\[
        \mathcal P^s_\delta(E)<\infty
        \qquad\text{for every }0<\delta<1.
\]
\end{enumerate}
\end{lemma}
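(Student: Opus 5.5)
For (i), I will argue by contraposition. Suppose \(E\) is unbounded and fix any \(\delta>0\). Then I will build an infinite packing with centres in \(E\) and all radii equal to \(\delta\). Choose \(x_1\in E\). Given \(x_1,\dots,x_n\), unboundedness of \(E\) supplies \(x_{n+1}\in E\) with \(d(x_{n+1},x_i)>2\delta\) for every \(i\le n\); such a point exists because the set \(\bigcup_{i\le n}B(x_i,2\delta)\) is bounded. The closed balls \(B(x_i,\delta)\) are then pairwise disjoint, so \(\mathcal P^s_\delta(E)\ge\sum_{i\le N}(2\delta)^s\) for every \(N\). Hence \(\mathcal P^s_\delta(E)=\infty\) for every \(\delta\), and taking the infimum over \(\delta\) gives \(\mathcal P^s_0(E)=\infty\).

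For (ii), assume \(\mathcal P^s_0(E)<\infty\) and pick \(\delta_0>0\) with \(\mathcal P^s_{\delta_0}(E)<\infty\).

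\emph{Total boundedness.} Suppose instead that for some \(\varepsilon>0\) the set \(E\) contains an infinite \(\varepsilon\)-separated sequence \((x_i)\). Put \(r:=\min\{\varepsilon/3,\delta_0\}\). The balls \(B(x_i,r)\) are pairwise disjoint, since \(2r<\varepsilon\). This gives \(\mathcal P^s_{\delta_0}(E)\ge\sum_i(2r)^s=\infty\), a contradiction. So every \(\varepsilon\)-separated subset of \(E\) is finite, and a maximal one is a finite \(\varepsilon\)-net. Hence \(E\) is totally bounded.

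\emph{Finiteness at every scale.} Let \(0<\delta<1\). If \(\delta\le\delta_0\), then monotonicity in \(\delta\) gives \(\mathcal P^s_\delta(E)\le\mathcal P^s_{\delta_0}(E)<\infty\). So assume \(\delta_0<\delta<1\). Take any packing \(\{B(x_i,r_i)\}\) with \(x_i\in E\) and \(r_i\le\delta\), and split it into two parts.

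The balls with \(r_i\le\delta_0\) form a \(\delta_0\)-packing. Their contribution is therefore at most \(\mathcal P^s_{\delta_0}(E)\).

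The balls with \(\delta_0<r_i\le\delta\) have pairwise distances between centres exceeding \(\max\{r_i,r_j\}>\delta_0\). So their centres form a \(\delta_0\)-separated subset of \(E\). Cover \(E\) by finitely many sets \(A_1,\dots,A_M\) of diameter less than \(\delta_0\), using total boundedness. Each \(A_k\) contains at most one of these centres, so there are at most \(M\) such balls. Their contribution is at most \(M(2\delta)^s\le M2^s\).

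Adding the two parts gives the uniform bound \(\mathcal P^s_\delta(E)\le\mathcal P^s_{\delta_0}(E)+M2^s<\infty\).

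The main point needing care is that \(M\) depends only on \(E\) and \(\delta_0\), not on the packing chosen. This holds because \(M\) is fixed in advance by total boundedness, which is exactly why total boundedness is proved first. The restriction \(\delta<1\) is not really needed; the same argument works for any finite \(\delta\), with \(2^s\) replaced by \((2\delta)^s\).
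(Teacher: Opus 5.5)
Your proof is correct and follows the paper's structure: an infinite family of equal disjoint balls gives (i) and total boundedness, and an arbitrary $\delta$-packing is then split into balls of radius at most $\delta_0$ and larger ones. The only difference is how you count the large balls: you use a finite cover of $E$ by sets of diameter less than $\delta_0$, whereas the paper shrinks the large balls to radius $\eta$ (which keeps them disjoint) and reads off $N(2\eta)^s\le\mathcal P^s_\eta(E)$, which avoids total boundedness at this step and gives the explicit bound $\mathcal P^s_\delta(E)\le(1+(\delta/\eta)^s)\mathcal P^s_\eta(E)$.
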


\begin{proof}
For (i), fix \(0<\delta<1\). An unbounded set contains a sequence
\((x_k)\) with \(d(x_k,x_\ell)>2\delta\) for \(k\neq\ell\). The balls
\(B(x_k,\delta)\) form a \(\delta\)-packing, and hence
\(\mathcal P^s_\delta(E)=\infty\).

For (ii), choose \(\eta>0\) such that
\(\mathcal P^s_\eta(E)<\infty\). For every \(\varepsilon>0\), a family of \(N\) points in \(E\) with
pairwise distances at least \(\varepsilon\) satisfies
\[
        N\bigl(2\min\{\eta,\varepsilon/3\}\bigr)^s
        \leq\mathcal P^s_\eta(E).
\]
Thus every maximal \(\varepsilon\)-separated subset is finite, and
\(E\) is totally bounded.

Fix \(0<\delta<1\), and decrease \(\eta\), if necessary, so that
\(\eta\leq\delta\). Given a \(\delta\)-packing of \(E\), the balls of
radius at most \(\eta\) have total \(s\)-mass at most
\(\mathcal P^s_\eta(E)\). Shrinking every remaining ball to radius
\(\eta\) gives an \(\eta\)-packing, so the number \(N\) of remaining
balls satisfies
\[
        N(2\eta)^s\leq\mathcal P^s_\eta(E).
\]
Consequently,
\[
        \mathcal P^s_\delta(E)
        \leq
        \left(1+\left(\frac{\delta}{\eta}\right)^s\right)
        \mathcal P^s_\eta(E)<\infty.
\]
\end{proof}

\begin{remark}
Feng, Hua and Wen~\cite{FengHuaWen1999} proved that
\(\mathcal P^s(K)=\mathcal P^s_0(K)\) whenever
\(K\subset\mathbb R^n\) is compact, \(0\leq s\leq n\), and
\(\mathcal P^s_0(K)<\infty\). Their theorem concerns the relation between
the infinitesimal premeasure and its Method I regularization.
\end{remark}

\begin{lemma}
\label{lem:Ps-le-Ms}
Let \(s>0\), let \(X\) be a metric space, and let \(F\subset X\). Then
\[
        \mathcal P^s(F)\leq 2^s\mathcal M^s(F).
\]
\end{lemma}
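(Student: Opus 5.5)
The inequality \(\mathcal P^s(F)\le 2^s\mathcal M^s(F)\) should follow directly from Lemma~\ref{lem:capacity-packing}, by comparing the two covering constructions term by term. We may assume \(\mathcal M^s(F)<\infty\). Since \(\mathcal M^s(F)\ge\mathcal M^s_\delta(F)\) for each \(0<\delta<1\), it suffices to show
\[
 \mathcal P^s(F)\le 2^s\mathcal M^s_\delta(F)
\]
for one (indeed any) such \(\delta\). This reduces the problem to comparing costs within a single admissible labelled cover.

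Fix \(\varepsilon>0\) and a labelled cover \(F\subseteq\bigcup_j E_j\) with \(|E_j|\le R_j\le\delta\), \(R_j>0\), and
\[
 \sum_j\Phi^s(E_j,R_j)\le\mathcal M^s_\delta(F)+\varepsilon .
\]
Discard the empty carriers, whose cost is zero. For each remaining \(j\) we chain three facts:
\[
 \mathcal P^s_0(E_j)\le\mathcal P^s_{R_j}(E_j)\le2^s\Phi^s(E_j,R_j).
\]
The first inequality holds because \(\mathcal P^s_\eta\) is nondecreasing in \(\eta\) and \(\mathcal P^s_0\) is their infimum. The second is the right-hand inequality in~\eqref{eq:capacity-packing}, which applies since \(|E_j|\le R_j<\infty\).

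Now \((E_j)_j\) is a countable cover of \(F\), so the Method I definition of \(\mathcal P^s\) gives
\[
 \mathcal P^s(F)\le\sum_j\mathcal P^s_0(E_j)\le2^s\sum_j\Phi^s(E_j,R_j)\le 2^s\bigl(\mathcal M^s_\delta(F)+\varepsilon\bigr).
\]
Letting \(\varepsilon\downarrow0\) gives the reduced inequality, and then \(\delta\downarrow0\) (or simply \(\mathcal M^s_\delta\le\mathcal M^s\)) concludes.

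There is essentially no obstacle. The only point needing care is that Lemma~\ref{lem:capacity-packing} is used in the general metric direction \(\mathcal P^s_R\le 2^s\Phi^s\), rather than the Euclidean equality. The factor \(2^s\) enters precisely because disjoint closed balls need only satisfy \(d(x_i,x_j)>\max\{r_i,r_j\}\), which forces the radius halving in that lemma. Empty sets and the case \(\mathcal M^s(F)=\infty\) are trivial.
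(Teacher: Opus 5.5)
Your proposal is correct and is essentially the paper's own proof: you take an admissible labelled $\delta$-cover, apply $\mathcal P^s_0(E_j)\le\mathcal P^s_{R_j}(E_j)\le 2^s\Phi^s(E_j,R_j)$ from Lemma~\ref{lem:capacity-packing}, then use the Method~I definition of $\mathcal P^s$ and take the infimum over covers. The $\varepsilon$-approximation and the removal of empty carriers are only cosmetic additions to the same argument.
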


\begin{proof}
Fix \(0<\delta<1\), and let \(\{(E_j,R_j)\}_j\)
be an admissible labelled cover of \(F\), so that
\(|E_j|\leq R_j\leq\delta\) and \(R_j>0\).
Lemma~\ref{lem:capacity-packing} gives
\[
        \mathcal P^s_0(E_j)
        \leq
        \mathcal P^s_{R_j}(E_j)
        \leq
        2^s\Phi^s(E_j,R_j).
\]
Taking the infimum over admissible labelled covers gives
\[
        \mathcal P^s(F)
        \leq2^s\mathcal M^s_\delta(F)
        \leq2^s\mathcal M^s(F).
\]
\end{proof}

For the reverse inequality, we decompose a set into a high-density
remainder and a part admitting a labelled cover of controlled cost.
For \(E\subset X\), write
\[
        D_E(x,r)
        :=
        \frac{\mathcal P^s_r(E\cap B(x,r))}{(2r)^s}.
\]

\begin{lemma}
\label{lem:local-density-decomposition}
Let \(s>0\), let \(X\) be a metric space, and suppose that
\(E\subset X\) satisfies \(\mathcal P^s_0(E)<\infty\). Fix
\(0<\delta<1/2\) and \(K>4^s\). Then there are a set \(G\subset E\), an at most countable index set
\(J\), and balls
\[
        B_j=B(x_j,r_j),
        \qquad j\in J,
\]
such that, writing
\[
        Z:=E\setminus G,
        \qquad
        E_j:=E\cap B_j,
\]
the following hold:

\begin{enumerate}
\item[\textup{(a)}]
\[
        G\subseteq\bigcup_{j\in J} E_j
        \quad\text{and}\quad
        \mathcal P^s_\delta(Z)
        \leq
        \frac{4^s}{K}\mathcal P^s_\delta(E).
\]

\item[\textup{(b)}]
For every \(j\), \(0<r_j\leq\delta\), and
\[
        \mathcal P^s_{r_j}(E\cap B_j)
        \leq
        K(2r_j)^s.
\]

\item[\textup{(c)}]
The balls \(B(x_j,r_j/5)\) are pairwise disjoint, and
\[
        \sum_{j\in J}(2r_j)^s
        \leq
        5^s\mathcal P^s_\delta(E).
\]

\item[\textup{(d)}]
For every \(j\), 
\[
        \Phi^s(E_j,2r_j)
        \leq
        K(1+2^s)(2r_j)^s.
\]
\end{enumerate}
Consequently,
\[
        \sum_{j\in J}\Phi^s(E_j,2r_j)
        \leq
        5^sK(1+2^s)\mathcal P^s_\delta(E).
\]
\end{lemma}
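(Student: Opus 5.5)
The approach is a stopping-time selection of low-density balls. For each \(x\in E\), decide whether there is a radius \(0<r\le\delta\) at which \(x\) has low density, meaning \(D_E(x,r)\le K\). Let \(G\) be the set of points for which some such radius exists, and pick one \(r_x\le\delta\) for each \(x\in G\). The complement \(Z=E\setminus G\) then consists of points with \(D_E(x,r)>K\) for every \(0<r\le\delta\).

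To get (b) and (c), work with the balls \(B(x,r_x/5)\), \(x\in G\), whose radii are bounded by \(\delta\). By Lemma~\ref{lem:finite-scale-finiteness}(ii), \(E\) is totally bounded, so \(G\) is separable. Lemma~\ref{lem:5r} therefore gives a countable disjoint subfamily \(\{B(x_j,r_j/5)\}_{j\in J}\) whose fivefold enlargements \(B(x_j,r_j)\) cover \(\bigcup_{x\in G}B(x,r_x/5)\supseteq G\).

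\begin{itemize}
\item Since each \(x_j\in G\subseteq E\), the first part of (a) holds.
\item Property (b) holds by the choice of \(r_j\).
\item For (c), the disjoint balls \(B(x_j,r_j/5)\) have centres in \(E\) and radii at most \(\delta\). They therefore form a \(\delta\)-packing, so \(\sum_j (2r_j/5)^s\le\mathcal P^s_\delta(E)\) for finite subsums, and hence for the whole sum.
\end{itemize}

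For (d), use Lemma~\ref{lem:capacity-packing}: \(\Phi^s(E_j,2r_j)\le\mathcal P^s_{2r_j}(E_j)\). Note that \(|E_j|\le 2r_j\), so the label is admissible. Split a \(2r_j\)-packing centred in \(E_j\) into two groups.

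\begin{itemize}
\item Balls with radius \(\le r_j\) form an \(r_j\)-packing of \(E\cap B_j\). Their contribution is at most \(K(2r_j)^s\) by (b).
\item Balls with radius in \((r_j,2r_j]\) are pairwise disjoint and centred in \(B_j\). Shrink each to radius \(r_j\); the shrunken balls remain disjoint and form an \(r_j\)-packing. If there are \(N\) of them, then \(N(2r_j)^s\le K(2r_j)^s\) by (b), so their original mass is at most \(N(4r_j)^s\le 2^sK(2r_j)^s\).
\end{itemize}

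Adding the two groups gives \(K(1+2^s)(2r_j)^s\). The final "consequently" follows by summing (d) over \(j\) and applying (c).

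The main obstacle is the second half of (a), the estimate \(\mathcal P^s_\delta(Z)\le 4^sK^{-1}\mathcal P^s_\delta(E)\). Take a finite \(\delta\)-packing \(\{B(z_i,\rho_i)\}\) centred in \(Z\). The idea is to replace each ball by a smaller concentric ball and use high density to pack inside it.

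\begin{itemize}
\item Since \(z_i\in Z\) and \(\rho_i/2\le\delta\), we have \(D_E(z_i,\rho_i/2)>K\). So there is a \((\rho_i/2)\)-packing of \(E\cap B(z_i,\rho_i/2)\) with mass greater than \(K\rho_i^s = K(2\rho_i)^s/2^s\).
\item Each ball of this inner packing has centre within \(\rho_i/2\) of \(z_i\) and radius at most \(\rho_i/2\), so it lies inside \(B(z_i,\rho_i)\). Inner balls coming from different \(i\) are therefore disjoint, because the outer balls are.
\item The union of all inner packings is a \(\delta\)-packing of \(E\). Hence \(K2^{-s}\sum_i(2\rho_i)^s\le\mathcal P^s_\delta(E)\), which gives \(\mathcal P^s_\delta(Z)\le 2^sK^{-1}\mathcal P^s_\delta(E)\). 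This is stronger than the required \(4^s/K\).
\end{itemize}

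One technical point is that the closed-ball containment must be checked with radius \(\rho_i/2\), since \(d(z_i,y)+r\le\rho_i/2+\rho_i/2\). If strictness issues arise, use \(\rho_i/4\) instead, which yields exactly the constant \(4^s/K\). The assumption \(K>4^s\) guarantees that this factor is less than \(1\), as the iteration in the next step of the paper requires.
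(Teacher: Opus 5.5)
Your proposal is correct and follows the paper's proof essentially step for step: the same definitions of $Z$ and $G$, the same $5r$ selection from the balls $B(x,r_x/5)$, and the same split of a $2r_j$-packing into radii $\le r_j$ and $>r_j$ for (d). The only difference is in the second half of (a), where the paper tests density at radius $r_i/4$ (so the inner balls lie in $B(x_i,r_i/2)$) and gets $4^s/K$. Your radius $\rho_i/2$ is also valid for closed balls, since a point $w$ of an inner ball $B(y,t)$ satisfies $d(w,z_i)\le t+\rho_i/2\le\rho_i$, and it gives the sharper bound $2^s/K$.
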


\begin{proof}
By Lemma~\ref{lem:finite-scale-finiteness}, \(E\) is totally bounded and
\(\mathcal P^s_\delta(E)<\infty\). Define
\[
        Z:=\{x\in E:D_E(x,r)>K\text{ for every }0<r\leq\delta\},
        \qquad G:=E\setminus Z.
\]
Let \(\{B(x_i,r_i)\}_{i\in I}\) be a \(\delta\)-packing of \(Z\).
For each \(i\in I\),
\[
        \mathcal P^s_{r_i/4}
        \left(E\cap B\left(x_i,\frac{r_i}{4}\right)\right)
        >K4^{-s}(2r_i)^s.
\]
Choose a packing
\(\mathscr P_i=\{B(x_{i,k},\rho_{i,k})\}_{k\in I_i}\) with
\(x_{i,k}\in E\cap B(x_i,r_i/4)\) and
\(0<\rho_{i,k}\leq r_i/4\), such that
\[
        \sum_{k\in I_i}(2\rho_{i,k})^s>K4^{-s}(2r_i)^s.
\]
Since \(B(x_{i,k},\rho_{i,k})\subseteq B(x_i,r_i/2)\), the union
\(\bigcup_{i\in I}\mathscr P_i\) is a \(\delta\)-packing of \(E\).
Hence
\[
        \mathcal P^s_\delta(E)
        \geq\sum_{i\in I}\sum_{k\in I_i}(2\rho_{i,k})^s
        \geq K4^{-s}\sum_{i\in I}(2r_i)^s.
\]
Taking the supremum over \(\delta\)-packings of \(Z\) proves the
precontent estimate in \textup{(a)}.

For each \(x\in G\), choose \(r(x)\in(0,\delta]\) such that
\[
        \mathcal P^s_{r(x)}\bigl(E\cap B(x,r(x))\bigr)
        \leq K(2r(x))^s.
\]
Apply Lemma~\ref{lem:5r} to
\[
        \mathscr B
        :=\left\{B\left(x,\frac{r(x)}5\right):x\in G\right\}.
\]
Its radii are at most \(\delta/5\), and its set of centres is separable
because \(G\subset E\) is totally bounded. We obtain an at most
countable, pairwise disjoint family
\(\{B(x_j,r_j/5)\}_{j\in J}\), with \(x_j\in G\) and
\(r_j=r(x_j)\), such that
\[
        G\subseteq\bigcup_{j\in J}\bigl(E\cap B(x_j,r_j)\bigr)
        =\bigcup_{j\in J}E_j.
\]
This proves the covering assertion in \textup{(a)}, and the choice of
\(r_j\) gives \textup{(b)}. The selected family is a
\(\delta\)-packing of \(E\), so
\[
        \sum_{j\in J}(2r_j)^s
        =5^s\sum_{j\in J}\left(2\frac{r_j}{5}\right)^s
        \leq5^s\mathcal P^s_\delta(E),
\]
which proves \textup{(c)}.

Fix \(j\in J\). Since \(E_j\subseteq B(x_j,r_j)\),
\(|E_j|\leq2r_j\leq2\delta<1\), so \((E_j,2r_j)\) is an
admissible labelled carrier. Let \(\{B(y_k,\rho_k)\}_{k\in I}\)
be a \(2r_j\)-packing of \(E_j\) by disjoint closed balls, and write
\[
        I_{\leq}:=\{k\in I:\rho_k\leq r_j\},
        \qquad I_{>}:=\{k\in I:\rho_k>r_j\}.
\]
By \textup{(b)},
\[
        \sum_{k\in I_{\leq}}(2\rho_k)^s
        \leq\mathcal P^s_{r_j}(E_j)
        \leq K(2r_j)^s.
\]
Shrinking the balls indexed by \(I_{>}\) to radius \(r_j\) gives an
\(r_j\)-packing of \(E_j\). Consequently,
\[
        N:=\#I_{>}\leq
        \frac{\mathcal P^s_{r_j}(E_j)}{(2r_j)^s}
        \leq K<\infty.
\]
Since \(\rho_k\leq2r_j\),
\[
\begin{aligned}
        \sum_{k\in I}(2\rho_k)^s
        \leq K(2r_j)^s+N(4r_j)^s
        \leq K(1+2^s)(2r_j)^s.
\end{aligned}
\]
Taking the supremum and applying Lemma~\ref{lem:capacity-packing} gives
\[
        \Phi^s(E_j,2r_j)
        \leq\mathcal P^s_{2r_j}(E_j)
        \leq K(1+2^s)(2r_j)^s.
\]
This includes singleton carriers and proves \textup{(d)}. The final
estimate follows by summing \textup{(d)} and applying \textup{(c)}.
\end{proof}

Iterating the decomposition at a fixed scale gives a cover of the
whole set.

\begin{lemma}
\label{lem:local-cost}
For every \(s>0\), there is a constant
\(C_{\mathrm{loc}}(s)<\infty\) such that, for every metric space \(X\),
every \(E\subset X\), and every \(0<\delta<1\),
\[
        \mathcal M^s_{\delta}(E)
        \leq
        C_{\mathrm{loc}}(s)\mathcal P^s_{\delta}(E).
\]
\end{lemma}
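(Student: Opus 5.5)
We iterate Lemma~\ref{lem:local-density-decomposition} at the fixed scale \(\delta\). If \(\mathcal P^s_\delta(E)=\infty\) there is nothing to prove, so assume \(\mathcal P^s_\delta(E)<\infty\). One technical point must be settled first: the decomposition lemma is stated for \(0<\delta<1/2\) and produces carriers \((E_j,2r_j)\) with labels \(2r_j\le2\delta\). These labels must be at most \(\delta\) to be admissible for \(\mathcal M^s_\delta\). So I apply the lemma at scale \(\delta/2\), and then I need to compare \(\mathcal P^s_{\delta/2}\) with \(\mathcal P^s_\delta\). Only the inequality \(\mathcal P^s_{\delta/2}(E)\le\mathcal P^s_\delta(E)\) is needed, and it holds trivially by monotonicity in the radius bound. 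The hypothesis \(\mathcal P^s_0(E)<\infty\) of the lemma follows from \(\mathcal P^s_0(E)\le\mathcal P^s_\delta(E)<\infty\). Finally, I fix \(K=2\cdot4^s\), so that \(4^s/K=1/2\).

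The iteration runs as follows. Set \(Z_0:=E\). Given \(Z_n\) with \(\mathcal P^s_0(Z_n)\le\mathcal P^s_\delta(Z_n)<\infty\), apply Lemma~\ref{lem:local-density-decomposition} to \(Z_n\) at scale \(\delta/2\). This gives \(G_n\subseteq Z_n\), carriers \((Z_n\cap B_j^{(n)},2r_j^{(n)})_{j\in J_n}\) covering \(G_n\), and a remainder \(Z_{n+1}:=Z_n\setminus G_n\). The lemma supplies two bounds:
\[
 \sum_{j\in J_n}\Phi^s\bigl(Z_n\cap B^{(n)}_j,2r^{(n)}_j\bigr)\le 5^sK(1+2^s)\mathcal P^s_{\delta/2}(Z_n),
 \qquad
 \mathcal P^s_{\delta/2}(Z_{n+1})\le\tfrac12\mathcal P^s_{\delta/2}(Z_n).
\]
By induction, \(\mathcal P^s_{\delta/2}(Z_n)\le2^{-n}\mathcal P^s_{\delta/2}(E)\le 2^{-n}\mathcal P^s_\delta(E)\). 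The labels satisfy \(2r_j^{(n)}\le\delta<1\) and dominate the carrier diameters, so every carrier is admissible. Summing over \(n\), the total cost of all carriers is at most
\[
2\cdot5^sK(1+2^s)\mathcal P^s_\delta(E)=2\cdot5^s\cdot2\cdot4^s(1+2^s)\,\mathcal P^s_\delta(E),
\]
which is the candidate for \(C_{\mathrm{loc}}(s)\) up to the final step below.

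What remains is the set \(Z_\infty:=\bigcap_nZ_n\). Every point of \(E\) outside \(Z_\infty\) lies in some \(G_n\) and is therefore covered. For \(Z_\infty\), monotonicity gives \(\mathcal P^s_{\delta/2}(Z_\infty)\le\mathcal P^s_{\delta/2}(Z_n)\to0\) as \(n\to\infty\). Since \(s>0\), any single point contributes \((2\cdot\delta/2)^s>0\) to the precontent, so \(\mathcal P^s_{\delta/2}(Z_\infty)=0\) forces \(Z_\infty=\varnothing\). The iteration therefore covers all of \(E\), and the cost bound above gives \(\mathcal M^s_\delta(E)\le C_{\mathrm{loc}}(s)\mathcal P^s_\delta(E)\) with that constant.

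The main obstacle is the bookkeeping of scales between \(\delta\) and \(\delta/2\), which I have resolved by the monotonicity \(\mathcal P^s_{\delta/2}\le\mathcal P^s_\delta\). I also need to check that the lemma applies to each \(Z_n\) as a set in its own right rather than relative to \(E\); it does, because its hypotheses involve only \(Z_n\). The degenerate cases \(E=\varnothing\) and \(G_n=\varnothing\) are harmless: the iteration simply passes through them.
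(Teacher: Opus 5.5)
Your proposal is correct and follows the paper's proof essentially line for line: the paper also applies Lemma~\ref{lem:local-density-decomposition} repeatedly at scale \(\delta_0=\delta/2\) with \(K=2\cdot4^s\). It then sums the geometric series of costs using \(\mathcal P^s_{\delta_0}\le\mathcal P^s_\delta\), shows the residual intersection is empty because every nonempty set has \(\delta_0\)-precontent at least \((2\delta_0)^s\), and obtains the same constant \(4\cdot4^s5^s(1+2^s)\). The only cosmetic difference is that you dismiss the case \(\mathcal P^s_\delta(E)=\infty\) rather than \(\mathcal P^s_0(E)=\infty\), which is equally valid and slightly more direct.
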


\begin{proof}
If \(\mathcal P^s_0(E)=\infty\), the
assertion is immediate. Assume \(\mathcal P^s_0(E)<\infty\), put
\(\delta_0:=\delta/2\), fix \(K>4^s\), and write
\[
        \lambda:=\frac{4^s}{K},
        \qquad
        A_s(K):=5^sK(1+2^s).
\]
Starting with \(E^{(0)}:=E\), apply
Lemma~\ref{lem:local-density-decomposition} successively to
\(E^{(m)}\) at scale \(\delta_0\). Thus
\[
        E^{(m)}=G^{(m)}\cup E^{(m+1)},
\]
where \(G^{(m)}\) has a labelled cover
\(\{(E_j^{(m)},2r_j^{(m)})\}_j\) satisfying
\[
        |E_j^{(m)}|\leq2r_j^{(m)}\leq\delta,
        \qquad r_j^{(m)}>0,
\]
\[
        \sum_j\Phi^s(E_j^{(m)},2r_j^{(m)})
        \leq
        A_s(K)\mathcal P^s_{\delta_0}(E^{(m)}),
\]
and
\[
        \mathcal P^s_{\delta_0}(E^{(m+1)})
        \leq
        \lambda\mathcal P^s_{\delta_0}(E^{(m)}).
\]

The part of \(E\) omitted by all \(E_j^{(m)}\) is contained in
\(\bigcap_mE^{(m)}\), and
\[
        \mathcal P^s_{\delta_0}\left(\bigcap_mE^{(m)}\right)
        \leq
        \lambda^m\mathcal P^s_{\delta_0}(E)
        \longrightarrow0.
\]
Every non-empty set has \(\delta_0\)-packing precontent at least
\((2\delta_0)^s\), so the intersection is empty. Consequently, the
pairs \((E_j^{(m)},2r_j^{(m)})\) form an admissible labelled cover
at scale \(\delta\), and \eqref{eq:label-content} gives
\begin{align*}
        \mathcal M^s_\delta(E)
        &\leq
        \sum_{m=0}^\infty\sum_j\Phi^s(E_j^{(m)},2r_j^{(m)})\\
        &\leq
        A_s(K)\sum_{m=0}^\infty
        \mathcal P^s_{\delta_0}(E^{(m)})\\
        &\leq
        \frac{A_s(K)}{1-\lambda}\mathcal P^s_{\delta_0}(E)\\
        &\leq
        \frac{A_s(K)}{1-\lambda}\mathcal P^s_\delta(E).
\end{align*}
For example, taking \(K=2\cdot4^s\) gives
\[
        C_{\mathrm{loc}}(s)
        =
        4\cdot4^s5^s(1+2^s).
\]
\end{proof}

Applying this estimate to a Method I cover gives the comparison of
outer measures.

\begin{lemma}
\label{lem:global-Ms-le-Ps}
For every \(s>0\), every metric space \(X\), and every \(F\subset X\),
\[
        \mathcal M^s(F)
        \leq
        C_{\mathrm{loc}}(s)\mathcal P^s(F).
\]
\end{lemma}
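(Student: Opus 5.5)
The plan is to pass from the finite-scale bound of Lemma~\ref{lem:local-cost} to the Method~I regularization by a standard covering argument. We may assume \(\mathcal P^s(F)<\infty\). Fix \(\varepsilon>0\) and choose a cover \(F\subseteq\bigcup_jF_j\) with
\[
        \sum_j\mathcal P^s_0(F_j)\le\mathcal P^s(F)+\varepsilon .
\]
Discarding empty pieces, we may assume each \(\mathcal P^s_0(F_j)<\infty\). By Lemma~\ref{lem:finite-scale-finiteness}, each \(\mathcal P^s_\eta(F_j)\) is then finite for \(0<\eta<1\). Since \(\mathcal P^s_\eta(F_j)\downarrow\mathcal P^s_0(F_j)\) as \(\eta\downarrow0\), for each \(j\) we can choose \(\eta_j\in(0,1)\) with
\[
        \mathcal P^s_{\eta_j}(F_j)\le\mathcal P^s_0(F_j)+\varepsilon2^{-j}.
\]

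The difficulty is that the \(\eta_j\) depend on \(j\), whereas \(\mathcal M^s(F)\) requires a single scale \(\delta\). Fix \(0<\delta<1\). For each \(j\) apply Lemma~\ref{lem:local-cost} at the scale \(\delta_j:=\min\{\delta,\eta_j\}\). Because \(\mathcal M^s_\delta\le\mathcal M^s_{\delta_j}\) (covers admissible at the smaller scale remain admissible at the larger one) and \(\mathcal P^s_{\delta_j}\le\mathcal P^s_{\eta_j}\), we obtain
\[
        \mathcal M^s_\delta(F_j)\le\mathcal M^s_{\delta_j}(F_j)\le C_{\mathrm{loc}}(s)\mathcal P^s_{\eta_j}(F_j)\le C_{\mathrm{loc}}(s)\bigl(\mathcal P^s_0(F_j)+\varepsilon2^{-j}\bigr).
\]
Thus shrinking the scale separately for each piece removes the problem.

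Countable subadditivity of \(\mathcal M^s_\delta\) (Proposition~\ref{prop:M-basic-properties}) and monotonicity then give
\[
        \mathcal M^s_\delta(F)\le\sum_j\mathcal M^s_\delta(F_j)\le C_{\mathrm{loc}}(s)\bigl(\mathcal P^s(F)+2\varepsilon\bigr).
\]
Letting \(\delta\downarrow0\) and then \(\varepsilon\downarrow0\) proves the lemma.

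No step is a real obstacle; the only point needing care is the scale mismatch just described. Combined with Lemma~\ref{lem:Ps-le-Ms}, this lemma yields Theorem~\ref{thm:comparison} with \(C(s)=C_{\mathrm{loc}}(s)\). Because the two measures are comparable, \(\mathcal M^t(F)=0\) if and only if \(\mathcal P^t(F)=0\), so the dimension statement \(\dim_{\mathcal M}F=\dim_PF\) follows.
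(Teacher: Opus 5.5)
Your argument is correct and is essentially the paper's proof: take a near-optimal Method I cover, apply Lemma~\ref{lem:local-cost} to each piece, and sum using countable subadditivity. The paper avoids your per-piece scales \(\eta_j\). It first lets \(\delta\downarrow0\) in Lemma~\ref{lem:local-cost} to get \(\mathcal M^s(E)\le C_{\mathrm{loc}}(s)\mathcal P^s_0(E)\) for every \(E\), and then uses countable subadditivity of \(\mathcal M^s\) itself. Since \(\mathcal M^s_\delta(F_j)\le\mathcal M^s(F_j)\), this already takes care of the scale mismatch you handle by hand.
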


\begin{proof}
If \(\mathcal P^s(F)=\infty\), there is
nothing to prove. For every \(E\subset X\),
Lemma~\ref{lem:local-cost} gives
\[
        \mathcal M^s(E)
        \leq
        C_{\mathrm{loc}}(s)\mathcal P^s_0(E)
\]
by letting \(\delta\downarrow0\). Fix \(\varepsilon>0\), and choose a
countable cover \(F\subseteq\bigcup_iF_i\) such that
\[
        \sum_i\mathcal P^s_0(F_i)
        \leq
        \mathcal P^s(F)+\varepsilon.
\]
By countable subadditivity of \(\mathcal M^s\),
\[
\begin{aligned}
        \mathcal M^s(F)
        &\leq
        \sum_i\mathcal M^s(F_i)\\
        &\leq
        C_{\mathrm{loc}}(s)\sum_i\mathcal P^s_0(F_i)\\
        &\leq
        C_{\mathrm{loc}}(s)
        \bigl(\mathcal P^s(F)+\varepsilon\bigr).
\end{aligned}
\]
Letting \(\varepsilon\downarrow0\) proves the claim.
\end{proof}

\begin{proof}[Proof of Theorem~\ref{thm:comparison}]
The two inequalities follow from Lemmas~\ref{lem:Ps-le-Ms}
and~\ref{lem:global-Ms-le-Ps}. Thus \(\mathcal M^s\) and
\(\mathcal P^s\) have the same null sets for every \(s>0\), and their
critical exponents agree.
\end{proof}

\begin{remark}
\label{rem:s=0}
With the convention \((2r)^0=1\),
\[
        \Phi^0(E,R)=\#E
        \qquad (|E|\leq R,\ 0<R<1),
\]
where \(\#E=\infty\) for every infinite set. Indeed, any finite set
of distinct centres admits sufficiently small positive separated radii.
It follows directly from the definition that
\(\mathcal M^0=\mathcal P^0\) is counting measure.
\end{remark}

\section{\texorpdfstring{Proof of Theorem~\ref{content:subshift}}{Proof of the subshift theorem}}
\label{content:section}

Let $I$ be a finite alphabet and let $A=(A_{ij})_{i,j\in I}$ be a
zero--one matrix. Its one-sided subshift of finite type is
\[
 \Sigma_A=\{\alpha\in I^{\N}:A_{\alpha_k\alpha_{k+1}}=1 
 \text{ for every }k\ge1\}.
\]
The matrix is \emph{irreducible} if, for every $i,j\in I$, some
$n\ge1$ satisfies $(A^n)_{ij}>0$. Attach to each symbol $i$ a
contracting similarity $S_i$ of $\R^d$ with ratio $a_i\in(0,1)$.
The coding map is
\[
 \Pi(\alpha)=\lim_{n\to\infty}
 S_{\alpha_1}\circ\cdots\circ S_{\alpha_n}(x_0),
 \qquad \alpha\in I^{\N},
\]
where the limit is independent of $x_0\in\R^d$.
For a finite nonempty word $w=w_1\cdots w_n$, put
\[
 [w]=\{\alpha\in I^{\N}:\alpha_k=w_k\text{ for }1\le k\le n\},
\]
and write
\[
 F_A=\Pi(\Sigma_A),\qquad
 F_A^w=\Pi(\Sigma_A\cap[w]),\qquad
 G_i=\bigcup_{j:A_{ij}=1}F_A^j.
\]
Inadmissible words give empty cylinders. Then
\begin{equation}\label{content:follower-system}
 F_A^i=S_iG_i,\qquad G_i=\bigcup_{j:A_{ij}=1}S_jG_j.
\end{equation}

\subsection{A sufficient condition for equality}

We shall use the following condition on a non-singleton compact set
$E\subset\R^d$: there exist $\kappa>0$ and $0<r_0<\diam E$ such
that, for every $x\in E$ and $0<r\le r_0$, there is a similarity $T$
satisfying
\begin{equation}\label{content:local-copies}
 T(E)\subset E\cap\overline {B(x,r)},\qquad
 \kappa r\le\diam T(E)\le r.
\end{equation}
We do not require $x\in T(E)$ or disjointness of the sets $T(E)$.

\begin{lemma}\label{content:critical-finiteness}
If $E$ satisfies~\eqref{content:local-copies}, then
\[
 0<s:=\dimH E=\dimP E,\qquad
 \mathcal P^s(E)<\infty,\qquad \MM^s(E)<\infty.
\]
\end{lemma}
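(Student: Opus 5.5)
We prove the lemma in four steps: positivity of dimension, an upper bound on the packing precontent at the Hausdorff critical exponent, equality of the dimensions, and the two finiteness assertions, the last by Theorem~\ref{thm:comparison}. The guiding idea is that~\eqref{content:local-copies} makes $E$ uniformly self-similar at all small scales. It gives a lower bound on local mass through copies of $E$, and it gives upper bounds through covering by copies of bounded size.

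\emph{Step 1: $\dimH E>0$.} Since $E$ is a non-singleton, fix $r\le r_0$ small and $x\in E$. Inside $E\cap\overline{B(x,r)}$ there is a copy $T(E)$ of diameter at least $\kappa r$. This copy contains two points at distance $\ge\kappa r$, and around each of them we again find copies. Iterating gives a Cantor-type subset with at least two pieces at each generation and a contraction ratio bounded below (of order $\kappa/4$). The standard mass distribution principle then gives $\dimH E\ge\log2/\log(4/\kappa)>0$.

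\emph{Step 2: bound $\mathcal P^s_\delta(E)$ at $s=\dimH E$.} Set $s=\dimH E$. We first aim at $\mathcal H^s(E)>0$, using a compact-space argument in the spirit of Farkas--Fraser: if $\mathcal H^s_\infty(E)=0$, then $E$ has a finite cover by sets $U_k$ with $\sum|U_k|^s<\varepsilon$. Transporting this cover by the local similarities $T$ yields covers of every small ball $B(x,r)\cap E$ of cost $\lesssim\varepsilon r^s$. Covering $E$ efficiently and iterating this produces $\mathcal H^{s-\eta}(E)=0$ for some $\eta>0$, contradicting $s=\dimH E$; the exponent gain comes from the geometric decay across the iteration. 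Hence $c:=\mathcal H^s_\infty(E)>0$.

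For any $x\in E$ and $r\le r_0$, the copy $T(E)\subset E\cap B(x,r)$ has diameter $\ge\kappa r$, so by scaling $\mathcal H^s_\infty(E\cap B(x,r))\ge c\,(\kappa r/\diam E)^s$. Given a packing $\{B(x_i,r_i)\}$ of $E$ with $r_i\le\delta\le r_0$, the sets $E\cap B(x_i,r_i)$ are pairwise disjoint. We need an upper bound, so we pass to $\mathcal H^s$ itself rather than to $\mathcal H^s_\infty$:
\[
 \sum_i(2r_i)^s\lesssim\sum_i\mathcal H^s(E\cap B(x_i,r_i))\le\mathcal H^s(E).
\]
This bounds $\mathcal P^s_\delta(E)$ once $\mathcal H^s(E)<\infty$. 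Finiteness of $\mathcal H^s(E)$ follows from the upper box-counting bound $N_r(E)\lesssim r^{-s}$, which we derive from the same copy structure: take a maximal $r$-separated set $\{x_i\}$ in $E$. The disjoint copies $T_i(E)\subset B(x_i,r/2)$ each carry $\mathcal H^s_\infty$-mass $\gtrsim r^s$, so a dual estimate with $\mathcal H^s$ bounds their number by $\lesssim\mathcal H^s(E)r^{-s}$. This is circular as stated. I would instead prove $\overline{\dim}_B E\le s$ directly by a sub-multiplicativity argument for the covering numbers $N_r(E)$, using the copies, giving $\overline{\dim}_B E=\dimH E$ (in the manner of Falconer's implicit theorems). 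I would then obtain $\mathcal P^s_0(E)<\infty$ from the uniform bound $N_r(E\cap B(x,\rho))\lesssim(\rho/r)^s$.

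\emph{Step 3: conclusions.} Step 2 gives $\dimP E\le\overline{\dim}_B E=s\le\dimP E$, so $\dimP E=s$. It also gives $\mathcal P^s(E)\le\mathcal P^s_0(E)<\infty$, and Theorem~\ref{thm:comparison} then yields $\MM^s(E)\le C(s)\mathcal P^s(E)<\infty$.

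The main obstacle is Step 2, specifically obtaining the sharp bound $N_r(E\cap B(x,\rho))\lesssim(\rho/r)^s$ at exactly the exponent $s$ rather than $s+\eta$. Without separation and with only non-disjoint copies, super/sub-multiplicativity alone gives limits but not uniform constants. I would try first Falconer's implicit-theorem route: use the copies to show $\mathcal H^s(E\cap B)\ge c\rho^s$ for all small balls, and combine this with an upper estimate of $\mathcal H^s(E)$ via covers transported into balls.
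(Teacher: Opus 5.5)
\textbf{There is a genuine gap in Step 2.} You never prove the central estimate $\mathcal P^s_{r_0}(E)<\infty$ at exactly $s=\dimH E$, and each route you sketch fails.

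\emph{The Hausdorff-measure route.} Transporting a cover of $E$ by $T$ covers only $T(E)$. The hypothesis gives $T(E)\subset E\cap\overline{B(x,r)}$; it does not map $E\cap B(x,r)$ back into $E$, which is what the lower-bound half of Falconer's implicit theorem needs. Worse, the target $\mathcal H^s(E)>0$ is false in general under \eqref{content:local-copies}. Self-similar sets satisfy this condition, and self-similar sets with overlaps can have $\mathcal H^s(E)=0$ at $s=\dimH E$ (for instance the examples of Peres, Simon and Solomyak). So your fallback bound $\mathcal H^s(E\cap B)\ge c\rho^s$ is also unavailable.

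\emph{The box-counting route.} Even the uniform bound $N_r(E\cap B(x,\rho))\lesssim(\rho/r)^s$ would not give $\mathcal P^s_0(E)<\infty$. Grouping the packing radii dyadically, it controls each scale by $O(1)$, and the sum over scales diverges. Concretely, take an $s$-regular $C\subset\R$ and a maximal $2^{-k}$-separated subset $N_k$ of $C$. Then $E=(C\times\{0\})\cup\bigcup_k(N_k\times\{2^{1-k}\})$ satisfies that bound. But the balls $B(y,2^{-k-2})$ with $y\in N_k\times\{2^{1-k}\}$ are pairwise disjoint and contribute $\asymp1$ per level, so $\mathcal P^s_\delta(E)=\infty$ for every $\delta>0$.

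\textbf{The missing idea} is to compare dimensions rather than measures, using copies at all the radii of a packing simultaneously; this is how the paper argues.
Take a finite packing $\{B(x_\ell,r_\ell)\}_{\ell=1}^N$ of $E$ with $r_\ell\le r_0$, and apply \eqref{content:local-copies} at $x_\ell$ with radius $r_\ell/2$. Disjoint closed balls in $\R^d$ satisfy $|x_i-x_j|>r_i+r_j$, so the copies $T_\ell E\subset E\cap\overline B(x_\ell,r_\ell/2)$ are pairwise disjoint. Their ratios satisfy $\kappa r_\ell/(2D)\le b_\ell<1$, where $D=\diam E$.

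For $N\ge2$, the system $(T_\ell)$ has the strong separation condition and its attractor lies in $E$. Its dimension $u$, defined by $\sum_\ell b_\ell^u=1$, therefore satisfies $u\le s$. Hence $\sum_\ell b_\ell^s\le1$, and so $\sum_\ell(2r_\ell)^s\le(4D/\kappa)^s$.
This bounds $\mathcal P^s_{r_0}(E)$ directly, with the mixed radii handled by the Moran equation. It yields $\dimP E\le s$ and $\mathcal P^s(E)<\infty$ without any Hausdorff-measure or box-counting input.

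The same construction with two copies gives $s>0$ more cleanly than your Step 1. In your iteration, new copies are chosen around points of the previous copy and need not lie inside it; iterating two fixed similarities $T_1,T_2$ makes the pieces automatically nested. With the packing bound in hand, your Step 3 goes through as written.
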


\begin{proof}
Put $D=\diam E$. Choose distinct $x_1,x_2\in E$ and
$0<r<\min\{r_0,|x_1-x_2|/3\}$. By~\eqref{content:local-copies},
there are similarities $T_1,T_2$, with ratios
$0<b_i\le r/D<1$, such that
\[
 T_i(E)\subset E\cap\overline {B(x_i,r)}\quad(i=1,2),\qquad
 \dist(T_1(E),T_2(E))\ge |x_1-x_2|-2r>0.
\]
Since $E$ is compact and $T_i(E)\subset E$, the self-similar
attractor $K$ of $\{T_1,T_2\}$ is contained in $E$.
Moreover, $T_1(K)\cap T_2(K)=\varnothing$, so the strong separation
condition holds. Consequently, $\dimH K$ is the unique solution
$u>0$ of $b_1^u+b_2^u=1$. Thus
$s=\dimH E\ge\dimH K=u>0$.

For any finite $r_0$-packing $(x_\ell,r_\ell)_{\ell=1}^N$ of $E$,
apply~\eqref{content:local-copies} at $x_\ell$ with radius $r_\ell/2$.
The resulting copies $T_\ell E$ are pairwise positively separated and
their ratios $b_\ell$ satisfy
\[
 T_\ell E\subset E\cap\overline B(x_\ell,r_\ell/2),\qquad
 b_\ell\ge\frac{\kappa r_\ell}{2D}.
\]
If $N\ge2$, the attractor $K$ of $(T_\ell)_{\ell=1}^N$ lies in $E$
and satisfies strong separation. Its dimension $u$ is determined by
$\sum_\ell b_\ell^u=1$~\cite{Falconer2014}. Since $u\le s$,
$\sum_\ell b_\ell^s\le1$; this inequality also holds for $N=1$.
Consequently,
\[
 \sum_{\ell=1}^N(2r_\ell)^s
 \le\left(\frac{4D}{\kappa}\right)^s\sum_{\ell=1}^N b_\ell^s
 \le\left(\frac{4D}{\kappa}\right)^s.
\]
Finite subsums give the same bound for countable packings. Hence
$\mathcal P^s_{r_0}(E)<\infty$, so $\mathcal P^s(E)<\infty$ and
$\dimP E\le s$. The reverse inequality follows from
$\dimH E\le\dimP E$, and $\MM^s(E)<\infty$ follows from
Theorem~\ref{thm:comparison}.
\end{proof}

\begin{lemma}\label{content:follower-copies}
If $A$ is irreducible, then either $G_i$ is a singleton for every
$i\in I$, or every $G_i$ satisfies~\eqref{content:local-copies}.
In the latter case,
\[
 0<\dimH G_i=\dimP G_i=\dimH F_A=\dimP F_A
 \qquad(i\in I).
\]
\end{lemma}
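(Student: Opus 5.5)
We need to prove Lemma~\ref{content:follower-copies}. The plan is to handle the singleton dichotomy first, then build local similarity copies from cylinders, and finally read off the dimensions from Lemma~\ref{content:critical-finiteness}.

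\emph{Singleton dichotomy.} Each $G_i$ is nonempty and compact. Nonemptiness holds because irreducibility makes every symbol extendable, so $\Sigma_A\cap[j]\ne\varnothing$ for every $j$, and each row of $A$ has some $1$.

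Suppose some $G_i$ is a non-singleton. By irreducibility, for any $k$ there is an admissible path from $k$ to $i$. Choose an admissible word $v=v_1\cdots v_n$ with $v_1$ a follower of $k$ (so $A_{kv_1}=1$), $A_{v_nl}=1$ for some $l$, and $v_n=i'$ with $A_{i'\,\cdot}$ leading to $i$. Simpler: pick a word $u$ with $ku$ admissible and ending in a symbol $m$ with $A_{mi}=1$. Iterating \eqref{content:follower-system} then gives
\[
S_{u}S_iG_i\subset G_k,
\]
where $S_u=S_{u_1}\circ\cdots\circ S_{u_p}$. Hence $G_k$ contains a similar copy of the non-singleton set $G_i$, so $G_k$ is not a singleton.

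\emph{Local copies.} Fix $i$ and $x\in G_i$, say $x=\Pi(\alpha)$ with $\alpha\in\Sigma_A$ and $A_{i\alpha_1}=1$. For $n\ge1$ the cylinder image satisfies
\[
\Pi(\Sigma_A\cap[\alpha_1\cdots\alpha_n])=S_{\alpha_1\cdots\alpha_n}G_{\alpha_n}\subset G_i,
\]
and it contains $x$ with diameter $a_{\alpha_1}\cdots a_{\alpha_n}\diam G_{\alpha_n}$.

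To get a copy of $G_i$ itself, use irreducibility to fix, for each pair $(j,i)$, a connecting word $c_{ji}$ such that $j\,c_{ji}$ is admissible and the last letter of $c_{ji}$ is a predecessor of $i$. Then
\[
T:=S_{\alpha_1\cdots\alpha_n}S_{c_{\alpha_n i}}
\]
satisfies $T(G_i)\subset S_{\alpha_1\cdots\alpha_n}G_{\alpha_n}\subset G_i$, with ratio $a_{\alpha_1\cdots\alpha_n}\,\gamma$. Here $\gamma$ ranges over a fixed finite set of positive constants (the ratios of the finitely many connecting words), so $\gamma\ge\gamma_0>0$.

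Given $0<r\le r_0$, take the least $n$ with
\[
a_{\alpha_1\cdots\alpha_n}\,M\le r,\qquad M:=\max_j\diam G_j.
\]
Then $T(G_i)\subset\overline{B(x,r)}$, since $x$ lies in the bigger cylinder of diameter at most $r$. By minimality of $n$ we have $a_{\alpha_1\cdots\alpha_n}\ge a_{\min}r/M$, so
\[
\diam T(G_i)\ge \gamma_0\,a_{\min}\,(r/M)\diam G_i=\kappa r.
\]
The upper bound $\diam T(G_i)\le r$ is automatic. Choosing $r_0<\min_j\diam G_j$ ensures $n\ge1$ exists, so the $n$ chosen above is valid.

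\emph{Dimensions.} Lemma~\ref{content:critical-finiteness} gives $0<\dimH G_i=\dimP G_i=:s_i$. The mutual inclusions of similar copies, $S_uS_iG_i\subset G_k$ for all $i,k$, together with the fact that similarities preserve both dimensions, force all $s_i$ to be equal. Finally,
\[
F_A=\bigcup_iS_iG_i
\]
is a finite union, and finite stability of both dimensions gives the same value for $F_A$.

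The main obstacle is the bookkeeping of the connecting words, so that the copy is of $G_i$ rather than of $G_{\alpha_n}$ and still lies inside the chosen cylinder. Uniformity of $\kappa$ comes from the finiteness of the alphabet, which makes the set of connecting ratios finite.
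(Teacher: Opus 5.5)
Your route is the paper's: connecting paths back to $i$ with finitely many ratios, the least prefix $\alpha_1\cdots\alpha_n$ whose cylinder has scale at most $r$, a copy of $G_i$ placed inside that cylinder, and then Lemma~\ref{content:critical-finiteness}, mutual inclusion of similar copies, and finite stability for the dimensions.

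However, the local-copies step fails as written. You choose $c_{ji}=u_1\cdots u_p$ with $j\,c_{ji}$ admissible and $A_{u_pi}=1$, and then use $S_{c_{ji}}(G_i)\subset G_j$. Iterating~\eqref{content:follower-system} along $c_{ji}$ only gives $S_{c_{ji}}(G_{u_p})\subset G_j$. The condition $A_{u_pi}=1$ gives $S_iG_i\subset G_{u_p}$, not $G_i\subset G_{u_p}$.

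This is a real failure, not a notational quibble. Take $I=\{1,2,3\}$ with $A_{12}=A_{13}=A_{21}=A_{31}=A_{33}=1$ and all other entries zero; this $A$ is irreducible. On $\R$ let $S_1x=x/4$, $S_2x=x/4+1/2$ and $S_3x=x/4+3/4$. Then $G_2=S_1G_1\subset[0,1/4]$ and $G_1\subset[1/2,9/16]\cup[3/4,1]$. Hence $S_2G_1\subset[5/8,3/4]$ meets $G_1$ in at most one point, while $S_2G_1$ is not a singleton. So the admissible choice $c_{21}=12$ gives $S_1S_2G_1\not\subset S_1G_1=G_2$. With $i=1$ and $\alpha_n=2$, the inclusion $T(G_i)\subset S_{\alpha_1\cdots\alpha_n}G_{\alpha_n}$ that your argument relies on then breaks down.

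The repair is already in your dichotomy paragraph, where you correctly wrote $S_uS_iG_i\subset G_k$. Use the connecting similarity $S_{c_{ji}}\circ S_i$, or equivalently let the connecting word be a path from $j$ that ends at $i$. This is exactly the paper's $U_j$. Its ratio still ranges over a finite set of positive numbers, so with $\gamma_0$ redefined accordingly, $\kappa=\gamma_0a_{\min}\diam G_i/M$ works. The rest of your argument then goes through unchanged: the least $n$, the lower bound on $a_{\alpha_1\cdots\alpha_n}$, and the dimension bookkeeping.

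Two cosmetic points. Delete the abandoned first attempt in the dichotomy paragraph. Also, the role of $r_0<\min_j\diam G_j$ is not to make $n$ exist, since it always does because the ratios tend to zero. Its role is to give $r_0<\diam G_i$, as the condition requires, and $r<M$, which your minimality bound needs when the least $n$ is $1$.
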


\begin{proof}
Associate to~\eqref{content:follower-system} the directed graph with
an edge $i\to j$ labelled by $S_j$ whenever $A_{ij}=1$. For a directed
path $p=(i_0,\ldots,i_n)$, define
\[
 S_p=S_{i_1}\circ\cdots\circ S_{i_n},\qquad
 a_p=\prod_{k=1}^n a_{i_k}.
\]
For the empty path, $S_p$ is the identity and $a_p=1$.
Then $S_p(G_{i_n})\subset G_{i_0}$.
Irreducibility implies that all $G_i$ have the same Hausdorff
dimension, and that either all are singletons or none is.
Assume the latter case and put $s=\dimH G_i$. Since
$F_A=\bigcup_i S_iG_i$, we also have $\dimH F_A=s$.

Fix $i$ and, for each $j$, choose a path from $j$ to $i$, with
similarity $U_j$ and ratio $c_j>0$; the empty path is allowed for
$j=i$. Put
\[
 D=\max_j\diam G_j>0,\qquad
 a=\min_j a_j>0,\qquad c=\min_j c_j>0.
\]
For $x\in G_i$ and $0<r\le(\diam G_i)/2$, choose an infinite
directed path from $i$ coding $x$, and let $p:i\to j$ be its
shortest prefix satisfying $a_pD\le r$. Then
\[
 a_p>ar/D,\qquad
 S_pU_jG_i\subset S_pG_j\subset G_i\cap\overline B(x,r),
\]
and
\[
 \frac{ac\,\diam G_i}{D}\,r
 <\diam(S_pU_jG_i)\le r.
\]
Thus $G_i$ satisfies~\eqref{content:local-copies}.
Lemma~\ref{content:critical-finiteness} gives $s>0$ and
$\dimP G_i=s$ for every $i$. Taking the finite union
$F_A=\bigcup_iS_iG_i$ gives $\dimP F_A=s$.
\end{proof}

\begin{lemma}\label{content:packing-density}
Let $s>0$, let $E\subset\R^d$ be Borel, and let $\mu$ be a finite
Borel measure supported on $E$. Suppose
$\mu(B)\le C\mathcal P^s(B)$ for every Borel $B\subset E$, where
$0\le C<\infty$. Then
\[
 \liminf_{r\downarrow0}\frac{\mu(B(x,r))}{(2r)^s}\le C
 \qquad\text{for $\mu$-almost every }x\in E.
\]
\end{lemma}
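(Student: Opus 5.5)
We argue by contradiction, using the standard density argument for packing measure: a set of points with large lower density carries packing measure at most a small multiple of its $\mu$-mass. Fix $t>C$ and let
\[
 E_t:=\Bigl\{x\in E:\liminf_{r\downarrow0}\frac{\mu(B(x,r))}{(2r)^s}>t\Bigr\}.
\]
Since $r\mapsto\mu(B(x,r))$ is monotone and right-continuous for closed balls, the liminf may be taken along rational radii up to a harmless adjustment of $t$. This makes $E_t$ Borel, or at least $\mu$-measurable after passing to a Borel set of equal measure. It suffices to show $\mu(E_t)=0$ for every $t>C$, and then take the union over $t=C+1/n$.

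The key step is the estimate $\mathcal P^s(A)\le t^{-1}\mu(U)$ for every $A\subset E_t$ and every open $U\supset A$. To prove it, write $A=\bigcup_k A_k$, where
\[
 A_k:=\bigl\{x\in A:\mu(B(x,r))>t(2r)^s\ \text{for all } 0<r\le 1/k\bigr\}.
\]
Fix $k$ and $\delta\le 1/k$. Take any $\delta$-packing $\{B(x_i,r_i)\}$ of a set $A'\subset A_k\cap U_\delta$, where $U_\delta:=\{x:\dist(x,U^c)>\delta\}$, so that each ball lies in $U$. Disjointness then gives
\[
 \sum_i(2r_i)^s\le t^{-1}\sum_i\mu(B(x_i,r_i))\le t^{-1}\mu(U).
\]
Hence $\mathcal P^s_0(A_k\cap U_\delta)\le t^{-1}\mu(U)$. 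Since $\mathcal P^s$ is the Method I regularization of $\mathcal P^s_0$, this yields $\mathcal P^s(A_k\cap U_\delta)\le t^{-1}\mu(U)$. Now $A_k\cap U_\delta$ increases to $A$ as $k\to\infty$ and $\delta\downarrow 0$, with $\delta=1/k$ coupled. Increasing-set continuity holds for the Borel regular outer measure $\mathcal P^s$ on $\R^d$, so passing to the limit gives $\mathcal P^s(A)\le t^{-1}\mu(U)$.

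Apply this with $A=E_t$ (Borel) and combine it with the hypothesis:
\[
 \mu(E_t)\le C\,\mathcal P^s(E_t)\le \frac{C}{t}\,\mu(U).
\]
Outer regularity of the finite Borel measure $\mu$ on $\R^d$ lets $\mu(U)\downarrow\mu(E_t)$ over open $U\supset E_t$. Therefore $\mu(E_t)\le (C/t)\mu(E_t)$. Since $C/t<1$ and $\mu(E_t)<\infty$, this forces $\mu(E_t)=0$. If $C=0$ the same chain gives $\mu(E_t)=0$ directly.

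The main obstacle is the measurability and limit bookkeeping in the key step. We need $E_t$, or a suitable Borel hull, to be admissible in the hypothesis, which is stated only for Borel $B\subset E$. We also need the passage from the sets $A_k\cap U_\delta$ to $A$. For the hull, we would replace $E_t$ by a Borel set $B$ with $E_t\subset B\subset E$ and $\mu(B)=\mu^*(E_t)$, and apply the hypothesis to $B\cap E_t$ after checking that $E_t$ is Borel via rational radii. For the limit, the increasing-union continuity of $\mathcal P^s$ holds because it is a Borel regular outer measure. Alternatively, we can avoid it by applying the hypothesis to each Borel piece $A_k\cap U_{1/k}$ and using continuity from below of $\mu$ instead.
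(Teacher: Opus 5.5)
Your proposal is correct and follows the paper's route: fix $t>C$, observe that $E_t$ is Borel, obtain $t\,\mathcal P^s(E_t)\le\mu(E_t)$, and combine this with the hypothesis $\mu(E_t)\le C\,\mathcal P^s(E_t)$ to force $\mu(E_t)=0$, then let $t\downarrow C$. The only difference is that the paper cites Mattila's packing-density theorem (Theorem~6.11 in his book) for $t\,\mathcal P^s(E_t)\le\mu(E_t)$, whereas you reprove it inline by the standard open-set packing argument, Borel regularity of $\mathcal P^s$, and outer regularity of $\mu$.
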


\begin{proof}
Fix $b>C$ and let
\[
 D_b=\left\{x\in E:
 \liminf_{r\downarrow0}\frac{\mu(B(x,r))}{(2r)^s}>b\right\}.
\]
The set $D_b$ is Borel and $\mu$ is Radon. Mattila's packing-density
estimate~\cite[Theorem~6.11]{Mattila1995}, with lower densities
normalised by $(2r)^s$, gives
\[
 b\mathcal P^s(D_b)\le\mu(D_b)\le C\mathcal P^s(D_b).
\]
Since $\mu$ is finite, $\mathcal P^s(D_b)<\infty$; hence $b>C$
forces $\mathcal P^s(D_b)=\mu(D_b)=0$. Let $b\downarrow C$
through a countable sequence.
\end{proof}

\begin{lemma}\label{content:return-exhaustion}
Suppose $E$ satisfies~\eqref{content:local-copies} and
$0<\MM^s(E)<\infty$, where $s>0$. For every $\theta>0$ there are
finitely or countably many similarities $T_n$, of ratios $b_n\le\theta$,
such that $T_nE\subset E$, $T_nE\cap T_mE=\varnothing\ (n\ne m)$, $\sum_n b_n^s=1$ and 
\begin{equation}\label{content:exhaustion}
 \begin{gathered}
 \MM^s\Bigl(E\setminus\bigcup_nT_nE\Bigr)=0.
 \end{gathered}
\end{equation}
\end{lemma}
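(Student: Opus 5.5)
We argue by a Vitali-type exhaustion with respect to the finite measure \(\nu:=\MM^s|_E\), which is a Borel measure on \(E\) because \(\MM^s\) is a metric outer measure (Proposition~\ref{prop:M-basic-properties}). The key fact is similarity covariance: for any similarity \(T\) of ratio \(b\) we have \(\MM^s(TE)=b^s\MM^s(E)\) by Proposition~\ref{prop:M-basic-properties}(iv). So if the images \(T_nE\) are pairwise disjoint compact subsets of \(E\) and cover \(E\) up to a \(\nu\)-null set, then
\[
 \MM^s(E)=\sum_n\MM^s(T_nE)=\MM^s(E)\sum_n b_n^s .
\]
Because \(0<\MM^s(E)<\infty\), this forces \(\sum_nb_n^s=1\). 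The identity \(\sum b_n^s=1\) is therefore automatic, and the task reduces to producing disjoint small copies inside \(E\) whose union has full \(\nu\)-measure.

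\emph{Density control.} Theorem~\ref{thm:comparison} gives \(\nu(B)\le C(s)\mathcal P^s(B)\) for Borel \(B\subset E\). Lemma~\ref{content:packing-density} then shows that for \(\nu\)-a.e.\ \(x\in E\) there are arbitrarily small \(r\) with \(\nu(B(x,r))\le 2C(s)(2r)^s\). On the other hand, for any \(r\le r_0\), condition~\eqref{content:local-copies} supplies a copy \(T E\subset E\cap B(x,r)\) of ratio \(b\ge\kappa r/D\), where \(D=\diam E\). Hence
\[
 \nu(TE)=b^s\MM^s(E)\ge(\kappa/D)^s\MM^s(E)\,r^s\ge\eta\,\nu(B(x,r))
\]
for a constant \(\eta>0\) that depends only on \(s,\kappa,D,\MM^s(E)\). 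We choose \(r\le\min\{r_0,\theta D\}\) so that \(b\le\theta\).

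\emph{Exhaustion.} We build the family in countably many stages. Suppose finitely many pairwise disjoint compact copies \(T_1E,\dots,T_kE\) have been chosen, and let \(U\) be the complement of their union; \(U\) is open. For \(\nu\)-a.e.\ \(x\in E\cap U\) we may choose good radii as above with additionally \(B(x,r)\subset U\), since the complement of \(U\) is compact and at positive distance from \(x\). The Vitali covering theorem for Radon measures in \(\R^d\) (Besicovitch) then yields disjoint balls \(B(x_\ell,r_\ell)\subset U\) covering \(\nu\)-almost all of \(E\cap U\); we keep finitely many of them carrying at least half of \(\nu(E\cap U)\). Placing a copy inside each kept ball and using the density estimate gives
\[
 \nu\bigl(E\cap U\setminus{\textstyle\bigcup}\text{new copies}\bigr)\le(1-\eta/2)\,\nu(E\cap U).
\]
The new copies lie in distinct disjoint balls inside \(U\), so the whole family stays pairwise disjoint. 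Iterating, the remaining \(\nu\)-measure decays geometrically to \(0\), which gives~\eqref{content:exhaustion}. The resulting family is countable, or finite if some stage already exhausts \(E\).

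The main obstacle is the density comparison. Lemma~\ref{content:packing-density} bounds \(\nu(B(x,r))\) only along some sequence of radii, and the Vitali theorem must be applied with exactly these fine families. This is legitimate because Besicovitch's Vitali theorem needs only arbitrarily small admissible radii at \(\nu\)-a.e.\ point. The other point to check is that \(\nu(TE)=\MM^s(TE)\) uses the full \(\MM^s\) and not the restriction to a subset: this holds because \(TE\subset E\) and \(TE\) is compact, hence Borel.
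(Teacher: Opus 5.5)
Your proof is correct. It shares its inputs with the paper's proof but handles the covering step differently. The shared inputs are Lemma~\ref{content:packing-density} applied with $C=C(s)$ from Theorem~\ref{thm:comparison}, the lower bound $b\ge\kappa r/D$ from~\eqref{content:local-copies}, and similarity covariance to force $\sum_n b_n^s=1$.

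The paper does not use the Besicovitch--Vitali theorem and does not iterate. It places each copy $C=T(E)$ in $\overline{B(x,R/J)}$ around a density point, with $J=2/\kappa+2$. The density bound at the larger radius $R$ then gives the doubling-type estimate $\mu(C^{[Hd_C]})\le M_0\,\mu(C)$, where $H=2/\kappa$. A single greedy selection follows: at each step the paper picks a disjoint copy whose diameter exceeds half the supremum still available, which is the $5r$-lemma mechanism. This shows that every uncovered density point lies in $C_n^{[H\diam C_n]}$ for some $n>N$, so the uncovered mass is at most $M_0\sum_{n>N}\mu(C_n)\to0$.

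Your route uses the density bound only on the ball containing the copy, which gives $\nu(TE)\ge\eta\,\nu(B(x,r))$, and lets Besicovitch's theorem produce disjoint balls. Because each copy captures only an $\eta$-fraction of its ball, you need two further devices. Truncating to finitely many balls per stage keeps $U$ open and preserves disjointness. The iteration then gives geometric decay $(1-\eta/2)^k$.

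The paper's construction is one pass, and its covering step uses only a metric selection argument. Yours avoids the enlargement bookkeeping with $H$, $J$ and $M_0$, at the cost of invoking the Euclidean Besicovitch--Vitali theorem and an iterative exhaustion.
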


\begin{proof}
Put $\mu=\MM^s|_E$, $m=\mu(E)$, $D=\diam E$,
$H=2/\kappa$, and $J=H+2$.
Fix $L>C(s)$. By Lemma~\ref{content:packing-density}, for
$\mu$-almost every $x\in E$ there are arbitrarily small $R>0$ such that
\begin{equation}\label{content:density-radii}
 \mu(B(x,R))\le L(2R)^s.
\end{equation}
For each such $x,R$ with $R/J\le\min\{r_0,\theta D\}$, put
$r=R/J$ and choose $T$ satisfying~\eqref{content:local-copies}.
Set $C=T(E)$ and $d_C=\diam C$. Then
\[
 \kappa r\le d_C\le r,\qquad
 \mu(C)=m(d_C/D)^s,
\]
and, writing $C^{[u]}=\{y:\dist(y,C)\le u\}$,
\begin{equation}\label{content:regular-copy}
 C^{[Hd_C]}\subset B(x,R),\qquad
 \mu(C^{[Hd_C]})\le
 M_0\mu(C),
\end{equation}
where $M_0=L(2JD/\kappa)^s/m$. Let $\mathscr C$ consist of the
sets $C$ obtained for all such $x,R$.

Choose pairwise disjoint $C_n\in\mathscr C$ recursively, with
\[
 \diam C_n>\frac12\sup\left\{\diam C:C\in\mathscr C,\quad
 C\cap\bigcup_{k<n}C_k=\varnothing\right\},
\]
until no member of $\mathscr C$ is disjoint from the preceding sets.
If the sequence is infinite, its diameters tend to zero,
since $\mu(C_n)=m(\diam C_n/D)^s$ and $\sum_n\mu(C_n)\le m$.
Thus every member $C$ of $\mathscr C$ intersects a selected $C_n$
with $\diam C\le2\diam C_n$.

Let $x\in E\setminus\bigcup_n C_n$ satisfy
\eqref{content:density-radii} for arbitrarily small $R>0$.
For each $N$ for which $C_1,\ldots,C_N$ are defined, choose such an $R$ with
$r=R/J<\dist(x,\bigcup_{n\le N}C_n)$ and
$r\le\min\{r_0,\theta D\}$. The associated set
$C\subset\overline {B(x,r)}$ meets some $C_n$, $n>N$,
with $\diam C\le2\diam C_n$. Hence
\[
 \dist(x,C_n)\le r\le\frac{\diam C}{\kappa}
 \le H\diam C_n.
\]
If the sequence is infinite, \eqref{content:regular-copy} gives
\[
 \mu\Bigl(E\setminus\bigcup_n C_n\Bigr)
 \le M_0\sum_{n>N}\mu(C_n)\longrightarrow0.
\]
If only $N$ sets are selected, this choice of $R$ gives a member of
$\mathscr C$ disjoint from all of them, contradicting the termination
of the construction. Finally, $C_n=T_nE$ and similarity covariance give
$m=\sum_n\mu(C_n)=m\sum_n b_n^s$.
\end{proof}

\begin{theorem}\label{content:local-copy-theorem}
Let $E\subset\R^d$ be non-singleton, compact, and satisfy
\eqref{content:local-copies}. Then $s:=\dimH E=\dimP E>0$, and
for every $B\subset E$ and every $0<\delta<\infty$,
\begin{equation}\label{content:general-equality}
 \MM^s_\infty(B)=\MM^s_\delta(B)=\MM^s(B)<\infty.
\end{equation}
\end{theorem}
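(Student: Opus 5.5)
First, Lemma~\ref{content:critical-finiteness} gives $s=\dimH E=\dimP E>0$ and $\MM^s(E)<\infty$. By Corollary~\ref{content:basic}, $\MM^s_\infty\le\MM^s_\delta\le\MM^s$, so finiteness holds for every $B\subset E$, and it suffices to prove $\MM^s(E)\le\MM^s_\infty(E)$. The subset case then follows from Lemma~\ref{content:transfer} applied with $\nu=\MM^s$ and $Q\in\{\MM^s_\infty,\MM^s_\delta\}$. The hypotheses of that lemma hold: $\MM^s$ is a Borel regular metric outer measure (Proposition~\ref{prop:M-basic-properties}), $Q$ admits Borel hulls (Proposition~\ref{content:outer-properties}(ii)), and $E$ is compact, hence Borel. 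If $\MM^s(E)=0$, every quantity vanishes by the chain~\eqref{content:chain}. So we may assume $0<\MM^s(E)<\infty$.

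For the main inequality, fix $\varepsilon>0$ and a cover $(E_j,R_j)_j$ of $E$ admissible for $\MM^s_\infty(E)$, with $\sum_j\Phi^s(E_j,R_j)\le\MM^s_\infty(E)+\varepsilon$. We may replace $E_j$ by $\overline{E_j\cap E}$, so each $E_j$ has diameter at most $D=\diam E$. The labels $R_j$ are, however, unbounded in general. To control them, I would first show that we may also assume $R_j\le D$ for every $j$ without increasing the cost. Indeed, $\Phi^s(E_j,R)$ is nondecreasing in $R$, and $\min\{R_j,D\}\ge|E_j|$ keeps the pair admissible. Alternatively, one can simply fix any finite bound $\Lambda$ with $\sup_j R_j\le\Lambda$ after truncation. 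With bounded labels, fix $\delta>0$ and apply Lemma~\ref{content:return-exhaustion} with $\theta=\delta/D$. This yields disjoint similarity copies $T_nE\subset E$ with ratios $b_n\le\theta$, $\sum_nb_n^s=1$, and $\MM^s(E\setminus\bigcup_nT_nE)=0$.

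Each $T_nE$ is covered by the pairs $(T_nE_j,b_nR_j)_j$. These satisfy $|T_nE_j|\le b_nR_j\le\theta D=\delta$, and by Proposition~\ref{prop:Phi-basic}(v) their cost equals $b_n^s\sum_j\Phi^s(E_j,R_j)$. The leftover set $N=E\setminus\bigcup_nT_nE$ has $\MM^s(N)=0$, hence $\MM^s_\delta(N)=0$ and it can be covered at cost at most $\varepsilon$. Countable subadditivity of $\MM^s_\delta$ gives
\[
 \MM^s_\delta(E)\le\sum_nb_n^s\bigl(\MM^s_\infty(E)+\varepsilon\bigr)+\varepsilon
 =\MM^s_\infty(E)+2\varepsilon.
\]
Letting $\varepsilon\downarrow0$ and then $\delta\downarrow0$ gives $\MM^s(E)\le\MM^s_\infty(E)$, and the reverse inequalities come from~\eqref{content:chain}.

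The step needing most care is the label truncation $R_j\mapsto\min\{R_j,D\}$. It requires monotonicity of $\Phi^s(E_j,\cdot)$ in the radius bound, which is immediate from the definition~\eqref{eq:label-cost}, and it requires $|E_j|\le D$, which is why the carriers are first intersected with $E$. The genuine analytic difficulty, the exhaustion by small disjoint copies with $\sum b_n^s=1$, is already packaged in Lemma~\ref{content:return-exhaustion}, so the remaining argument is bookkeeping.
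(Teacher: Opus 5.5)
Your proposal is correct and follows the paper's proof essentially step for step. You intersect the carriers with $E$ and truncate the labels at $D=\diam E$, then push the near-optimal content cover into the disjoint copies $T_nE$ from Lemma~\ref{content:return-exhaustion} with $\theta=\delta/D$, using similarity covariance and $\sum_n b_n^s=1$. You then absorb the $\MM^s$-null remainder by subadditivity and pass to arbitrary $B\subset E$ via Lemma~\ref{content:transfer}. The only differences from the paper are cosmetic: you take closures of the carriers, and you cover the null remainder at cost $\varepsilon$ instead of bounding it directly by its $\MM^s$-measure.
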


\begin{proof}
Lemma~\ref{content:critical-finiteness} gives
$s=\dimH E=\dimP E>0$ and $\MM^s(E)<\infty$.
If $\MM^s(E)=0$, \eqref{content:chain} gives the assertion.
Suppose $\MM^s(E)>0$. 
Put $D=\diam E$ and fix $\eta,\varepsilon>0$.
By~\eqref{content:full}, choose a labelled cover
$(\widetilde A_j,\widetilde R_j)_j$ of $E$ with total cost at most
$\MM^s_\infty(E)+\varepsilon$. For each $j$, set
\[
 A_j=\widetilde A_j\cap E,\qquad
 R_j=\min\{\widetilde R_j,D\}.
\]
Then $E=\bigcup_j A_j$, $0<R_j\le D$, and $\diam A_j\le R_j$.
Since $A_j\subset\widetilde A_j$ and $R_j\le\widetilde R_j$,
every packing admissible for $\Phi^s(A_j,R_j)$ is also admissible
for $\Phi^s(\widetilde A_j,\widetilde R_j)$. Hence
\[
 \sum_j\Phi^s(A_j,R_j)
 \le\sum_j\Phi^s(\widetilde A_j,\widetilde R_j)
 \le\MM^s_\infty(E)+\varepsilon.
\]
Apply Lemma~\ref{content:return-exhaustion} with
$\theta=\eta/D$. The labelled family $(T_nA_j,b_nR_j)_{n,j}$
is an $\eta$-cover of $\bigcup_nT_nE$, since $b_nR_j\le\eta$.
By Proposition~\ref{prop:Phi-basic}\textup{(v)} and
$\sum_n b_n^s=1$, 
\begin{align*}
 \sum_{n,j}\Phi^s(T_nA_j,b_nR_j)
 &=\sum_n b_n^s\sum_j\Phi^s(A_j,R_j)\\
 &=\sum_j\Phi^s(A_j,R_j)\\
 &\le\MM^s_\infty(E)+\varepsilon.
\end{align*}
Moreover, \eqref{content:exhaustion} gives
$\MM^s(E\setminus\bigcup_nT_nE)=0$. Subadditivity and
\eqref{content:chain} therefore yield
\[
 \MM^s_\eta(E)
 \le\MM^s_\eta\Bigl(\bigcup_nT_nE\Bigr)
   +\MM^s\Bigl(E\setminus\bigcup_nT_nE\Bigr)
 \le\MM^s_\infty(E)+\varepsilon.
\]
Letting $\varepsilon\downarrow0$ and then $\eta\downarrow0$ gives
$\MM^s(E)=\MM^s_\infty(E)$. Lemma~\ref{content:transfer} and
\eqref{content:chain} give~\eqref{content:general-equality}
for every $B\subset E$.
\end{proof}

\subsection{Proof and consequences}

\begin{proof}[Proof of Theorem~\ref{content:subshift}]
Since $s=\dimH F_A>0$ and $F_A=\bigcup_iS_iG_i$, the sets $G_i$
cannot all be singletons. Lemma~\ref{content:follower-copies} gives
$\dimP F_A=s$ and~\eqref{content:local-copies} for every $G_i$.
Theorem~\ref{content:local-copy-theorem} therefore yields
\eqref{content:subshift-equality} for every $G_i$.
For an admissible word $w=w_1\cdots w_n$,
\[
 F_A^w=S_{w_1}\circ\cdots\circ S_{w_n}(G_{w_n}),
\]
so similarity covariance gives the cylinder assertion for every
finite $\delta>0$. Empty cylinders have zero content and measure.
\end{proof}

\begin{proof}[Proof of Corollary~\ref{content:strongM}]
For the full shift, $F_A=G_i=E$ for every $i\in I$.
Lemma~\ref{content:follower-copies} gives $s>0$, so
Theorem~\ref{content:subshift} applies.
\end{proof}

\begin{proof}[Proof of Corollary~\ref{content:graph}]
Let $I$ be the edge set and set $A_{ef}=1$ precisely when the terminal
vertex of $e$ is the initial vertex of $f$, as in
Farkas--Fraser~\cite[Proposition~2.5]{FF}. Strong connectivity implies
that $A$ is irreducible. If $e$ has terminal vertex $i$, then
\[
 F_A^e=S_e(E_i),\qquad
 G_e=\bigcup_{f:\,\text{initial vertex of }f=i}F_A^f=E_i.
\]
Every vertex has an incoming edge. Thus
Lemma~\ref{content:follower-copies} and Theorem~\ref{content:subshift}
apply to every $E_i$.
\end{proof}

\begin{corollary}\label{content:subsets}
Let $K$ be one of the sets covered by Theorem~\ref{content:subshift}
or Corollaries~\ref{content:strongM} and~\ref{content:graph}, with
corresponding critical exponent $s>0$. For every $B\subset K$ and
every finite $\delta>0$,
\begin{equation}\label{content:subset-equality}
 \MM^s_\infty(B)=\MM^s_\delta(B)=\MM^s(B)<\infty.
\end{equation}
\end{corollary}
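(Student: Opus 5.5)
The plan is to reduce every case to Theorem~\ref{content:local-copy-theorem}, which already gives \eqref{content:general-equality} for all subsets of a compact non-singleton set satisfying \eqref{content:local-copies}. Alternatively, we can use Theorem~\ref{content:subshift} together with Lemma~\ref{content:transfer}.

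First, the follower sets. If $K=G_i$, then Lemma~\ref{content:follower-copies} shows that $G_i$ satisfies \eqref{content:local-copies}. The non-singleton case is forced by $s>0$, exactly as in the proof of Theorem~\ref{content:subshift}. Moreover, $G_i$ is compact, being a finite union of images of the compact set $\Sigma_A\cap[j]$ under the continuous map $\Pi$. Theorem~\ref{content:local-copy-theorem} then applies directly to every $B\subset G_i$. The same reasoning covers Corollary~\ref{content:strongM}, where $E=G_i$ for the full shift. It also covers Corollary~\ref{content:graph}, where $E_i=G_e$ for an edge $e$ terminating at $i$.

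Next, the cylinders. Let $K=F_A^w$ with $w$ admissible of length $n$, and put $S_w=S_{w_1}\circ\cdots\circ S_{w_n}$ with ratio $a_w$. Then $F_A^w=S_w(G_{w_n})$, so any $B\subset F_A^w$ has the form $B=S_w(B')$ with $B'\subset G_{w_n}$. For $\MM^s$ and $\MM^s_\infty$, similarity covariance (Proposition~\ref{prop:M-basic-properties}(iv) and \eqref{content:covariance}) gives $\MM^s(B)=a_w^s\MM^s(B')$ and $\MM^s_\infty(B)=a_w^s\MM^s_\infty(B')$. These two values agree by the previous paragraph. For the middle term $\MM^s_\delta(B)$, avoid covariance, since it rescales $\delta$. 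Instead use the sandwich $\MM^s_\infty\le\MM^s_\delta\le\MM^s$ from \eqref{content:chain}. Equality of the outer terms then forces equality of all three, and finiteness follows from $\MM^s(B)\le\MM^s(G_{w_n})<\infty$. Empty cylinders and their subsets are trivial.

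As a cross-check, the cylinder case also follows from Lemma~\ref{content:transfer} applied with $\nu=\MM^s$, $Q=\MM^s_\infty$ and $E=F_A^w$. This uses three facts: $F_A^w$ is compact, hence Borel; $\MM^s$ is Borel regular and metric by Proposition~\ref{prop:M-basic-properties}; and $\MM^s_\infty$ has Borel hulls by Proposition~\ref{content:outer-properties}(ii). The only point needing care is that the $\delta$-content must not be rescaled under similarities, and the chain \eqref{content:chain} handles this. I expect no genuine obstacle.
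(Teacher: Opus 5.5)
Your proof is correct. The ``cross-check'' in your last paragraph is exactly the paper's proof: Lemma~\ref{content:transfer} is applied once, with $Q=\MM^s_\infty$, $\nu=\MM^s$ and $E=K$. It uses only three facts: $K$ is compact, hence Borel; $\MM^s_\infty(K)=\MM^s(K)<\infty$ by Theorem~\ref{content:subshift} and Corollaries~\ref{content:strongM}--\ref{content:graph}; and the Borel hulls of Proposition~\ref{content:outer-properties}(ii) exist. The middle term then follows from \eqref{content:chain}.

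Your main route splits into cases instead. For $G_i$ you quote the subset clause of Theorem~\ref{content:local-copy-theorem}, whose final step is itself this transfer argument. For cylinders you transport the equality from subsets of $G_{w_n}$ by similarity covariance of $\MM^s$ and $\MM^s_\infty$, and use the sandwich only for $\MM^s_\delta$.

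Both routes work. The paper's is uniform in $K$ and needs no knowledge of how $K$ is built from the $G_i$. Yours reuses Theorem~\ref{content:local-copy-theorem} and covariance rather than rerunning the transfer lemma on each $K$, at the cost of the case analysis and the care about not rescaling $\delta$.

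One small clarification: the bound $\MM^s(B)\le\MM^s(G_{w_n})$ comes from $\MM^s(B)=a_w^s\MM^s(B')$ with $a_w<1$. It does not come from an inclusion $F_A^w\subset G_{w_n}$, which need not hold.
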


\begin{proof}
Apply Lemma~\ref{content:transfer} with $Q=\MM^s_\infty$ and
$\nu=\MM^s$, using the Borel hulls from
Proposition~\ref{content:outer-properties}.
\end{proof}

\begin{remark}\label{content:zero}
With the convention \((2r)^0=1\), Theorem~\ref{content:subshift}
also holds for \(s=0\). Indeed, if \(A\) is irreducible and
\(\dimH F_A=0\), Lemma~\ref{content:follower-copies} implies that
every \(G_i\) is a singleton. Thus \(F_A=\bigcup_iS_iG_i\) is finite
and \(\dimP F_A=0\). Every \(F_A^w\) is a singleton or empty.
Thus, for \(K=F_A^w\) or \(K=G_i\) and every \(0<\delta<\infty\),
\[
        \MM^0_\infty(K)=\MM^0_\delta(K)=\MM^0(K)=\#K\le1.
\]
For a singleton \(K\), every admissible cover has cost at least one,
whereas the cover consisting of \((K,R)\), with \(0<R\le\delta\),
has cost one. For \(K=\varnothing\), all three values are zero.
\end{remark}

\section*{Acknowledgements}
The author thanks Lars Olsen for the invitation to visit the Analysis
Group at the University of St Andrews, where part of this work was
carried out, and Kenneth Falconer and Lars Olsen for helpful discussions. The visit
was supported by the China Scholarship Council
(award No.\ 202506840058).

\label{lastpage}


\begin{thebibliography}{99}

\bibitem{Falconer2014}
\textsc{K. J. Falconer}.
\emph{Fractal Geometry: Mathematical Foundations and Applications}, 3rd edn.
John Wiley \& Sons, Chichester, 2014.

\bibitem{FF}
\'A. Farkas and J. M. Fraser.
On the equality of Hausdorff measure and Hausdorff content.
\emph{J. Fractal Geom.} \textbf{2}(4) (2015), 403--429.
\href{https://doi.org/10.4171/JFG/27}{doi:10.4171/JFG/27}.

\bibitem{FengHuaWen1999}
\textsc{D.-J. Feng, S. Hua and Z.-Y. Wen}.
Some relations between packing premeasure and packing measure.
\emph{Bull. Lond. Math. Soc.} \textbf{31} (1999), 665--670.

\bibitem{JP}
\textsc{H. Joyce and D. Preiss}.
On the existence of subsets of finite positive packing measure.
\emph{Mathematika} \textbf{42} (1995), 15--24.

\bibitem{Mattila1995}
\textsc{P. Mattila}.
\emph{Geometry of Sets and Measures in Euclidean Spaces: Fractals and
Rectifiability}. Cambridge Studies in Advanced Mathematics vol.~44
(Cambridge University Press, 1995).

\bibitem{MM}
\textsc{P. Mattila and R. D. Mauldin}.
Measure and dimension functions: measurability and densities.
\emph{Math. Proc. Cambridge Philos. Soc.} \textbf{121} (1997), 81--100.

\bibitem{Rogers1970}
\textsc{C. A. Rogers}.
\emph{Hausdorff Measures} (Cambridge University Press, 1970).

\bibitem{SaintRaymondTricot1988}
\textsc{X. Saint Raymond and C. Tricot}.
Packing regularity of sets in \(n\)-space.
\emph{Math. Proc. Cambridge Philos. Soc.} \textbf{103} (1988), 133--145.

\bibitem{Sullivan1984}
\textsc{D. Sullivan}.
Entropy, Hausdorff measures old and new, and limit sets of geometrically finite Kleinian groups.
\emph{Acta Math.} \textbf{153} (1984), 259--277.

\bibitem{TaylorTricot1985}
\textsc{S. J. Taylor and C. Tricot}.
Packing measure and its evaluation for a Brownian path.
\emph{Trans. Amer. Math. Soc.} \textbf{288} (1985), 679--699.

\bibitem{Tricot1982}
\textsc{C. Tricot}.
Two definitions of fractional dimension.
\emph{Math. Proc. Cambridge Philos. Soc.} \textbf{91} (1982), 57--74.

\end{thebibliography}
\end{document}